\newcommand{\w}{\omega}
\newcommand{\id}{{\rm id}}
\newcommand{\Hom}{{\rm Hom\,}}
\newcommand{\End}{{\rm End}}
\newcommand{\Res}{{\rm Res\,}}
\DeclareMathOperator{\vac}{{\rm Vac\,}}
\newcommand{\fusion}[3]{{\binom{#3}{#1\;#2}}}
\newcommand\Z{\mathbb{Z}}
\newcommand\Zpos{\Z_{\geq0}}
\newcommand\Zplus{\Z_{>0}}
\newcommand\C{\mathbb{C}}
\newcommand\N{\mathbb{N}}
\newcommand\h{\mathfrak{h}}
\newcommand{\NO}{\,{\raise0.25em\hbox{$\mathop{\hphantom {\cdot}}\limits^{_{\circ}}_{^{\circ}}$}}\,}
\providecommand{\abs}[1]{\lvert#1\rvert}
\newcommand{\Fusion}{{\rm F}}
\renewcommand{\vac}{|0\rangle}
\newcommand{\vact}{|\theta\rangle}
\newcommand{\frakg}{\mathfrak{g}}
\newcommand{\frakh}{\mathfrak{h}}
\newcommand{\ket}[1]{|#1\rangle}
\newtheorem{theorem}{Theorem}[section]
\newtheorem{proposition}[theorem]{Proposition}
\newtheorem{lemma}[theorem]{Lemma}
\theoremstyle{definition}
\newtheorem{definition}[theorem]{Definition}
\theoremstyle{remark}
\newtheorem{remark}[theorem]{\bf Remark}
\numberwithin{equation}{section}
\begin{document}
\begin{large}
\begin{center}
Intertwining operators and fusion rules for vertex operator algebras\\
arising from  symplectic fermions
\end{center}
\end{large}

\begin{center}
Toshiyuki Abe${}^1$
, Yusuke Arike${}^2$
\\
\bigskip
${}^1$ Graduate School of Science and Engineering,\\
Ehime University\\
2-5, Bunkyocho, Matsuyama, Ehime 790-8577, Japan\\
abe@ehime-u.ac.jp

\bigskip 
${}^2$ Department of Pure and Applied Mathematics\\
Graduate School of Information Science and Technology\\
Osaka University\\
y-arike@cr.math.sci.osaka-u.ac.jp
\end{center}
\vskip2ex
\abstract{
We determine fusion rules (dimensions of the space of 
intertwining operators) among simple modules for the vertex operator algebra obtained as
an even part of the symplectic fermionic vertex operator superalgebra.
By using these fusion rules we show that the fusion algebra of this vertex operator algebra
is isomorphic to the group algebra of the Klein four group over $\Z$.
}

\tableofcontents
\section{Introduction}\label{sect-intro}
The aim of this paper is to determine fusion rules among simple modules
for the vertex operator algebra $\mathscr{F}^+$ arising from symplectic fermions.

A fusion rule is defined to be the dimension of the space of intertwining operators among three modules
for a vertex operator algebra.
It is known that intertwining operators give rise to $3$-point functions over the  projective line (\cite{Z1}).

The vertex operator algebra $\mathscr{F}^+$ is an even part of the symplectic fermionic vertex operator superalgebra
which is constructed from a $2d$-dimensional symplectic vector space (\cite{Abe07}).
It is shown in \cite{Abe07} that $\mathscr{F}^+$ satisfies Zhu's finiteness condition
but is not rational and that  the number of simple modules is four, which is independent of $d$.

In order to determine fusion rules among simple modules, we first restrict our attention to the case $d=1$
(we denote $\mathscr{F}^+$ for $d=1$ by $\mathscr{T}^+$).
In this case we determine fusion rules
by using the notion of Frenkel--Zhu bimodules (\cite{FZ}). 
Frenkel--Zhu bimodules are quotient spaces of modules for a vertex operator algebra
together with the left and right actions of Zhu's algebra. 
However, we should be careful when we apply the Frenkel--Zhu theory 
to our vertex operator algebra since \cite{FZ} is not precisely correct.
As it is known in \cite{Li},
we only use Frenkel--Zhu bimodules 
to get the upper bounds of fusion rules.
For this purpose we determine generators of Frenkel--Zhu bimodules of simple $\mathscr{T}^+$-modules.
We can show that Frenkel--Zhu bimodules are generated by the images of the lowest weight spaces of simple modules.
The lower bounds of fusion rules are determined by giving nontrivial intertwining operators.

The fusion algebra of $\mathscr{T}^+$ is a free $\Z$-module generated by all inequivalent simple $\mathscr{T}^+$-modules
with the multiplication defined by
\begin{equation*}
M\times N=\sum_{L:\text{ simple}}\dim_\C I_{\mathscr{T}^+}\fusion{M}{N}{L} L
\end{equation*}
where $\dim_\C I_{\mathscr{T}^+}\fusion{M}{N}{L}$ is a fusion rule.
Fusion rules for simple $\mathscr{T}^+$-modules yield that
the fusion algebra of  $\mathscr{T}^+$ is isomorphic to the group algebra of the Klein four group 
$\Z/2\Z\times\Z/2\Z$ over $\Z$.

For the case $d>1$ we use the fact that $(\mathscr{T}^+)^{\otimes d}$ is a vertex operator subalgebra
of $\mathscr{F}^+$ but does not share the Virasoro element with $\mathscr{F}^+$.
Then it follows that every simple $\mathscr{F}^+$-module decomposes into a direct sum
of tensor product of simple $\mathscr{T}^+$-modules as a $(\mathscr{T}^+)^{\otimes d}$-module.
Then by the results in \cite{ADL}, we obtain our main result: the fusion algebra for $d>1$ is isomorphic to the one for $d=1$.

In \cite{Abe07}, the first author construct indecomposable reducible $\mathscr{F}^+$-modules on which $L_0$ do not act semisimply.
It is pointed out in \cite{Mil,Ar} that intertwining operators among such indecomposable modules
involve logarithmic terms. 
This issue will be studied in the forthcoming papers.

The organization of this paper is described as follows.
In section 2 we recall basics about vertex operator algebras, intertwining operators 
and Frenkel--Zhu bimodules.
In section 3 we introduce the symplectic fermionic vertex operator superalgebra 
$\mathscr{F}$ and recall simple $\mathscr{F}^+$-modules. 
In section 4 we determine generators of Frenkel--Zhu bimodules of simple $\mathscr{F}^+$-modules
for the case $d=1$. 
Final section is devoted to determine fusion rules among simple modules for $d=1$ and then $d>1$.
The appendix is devoted to derive a formula of a determinant used in section 4.
\vskip 1ex
\noindent
{\bf Acknowledgment}
The first author is partly supported by Grant in Aid for Young Scientist (B) 23740022
and the second author is partly supported by Grant in Aid for Young Scientist (B) 23740019.

\section{Preliminaries}\label{sect-2}
\subsection{Vertex operator algebras and their modules}\label{subsect-2-1}
\begin{definition}\label{def:chiral}
A quartet $(V,\,Y,\,\omega,\,\vac)$ is called 
a \textit{vertex operator algebra} if it satisfies the following conditions:
\vskip1ex
\noindent
(1) The vector space $V$ is  $\N$-graded  
$V=\bigoplus_{n=0}^\infty V_n$ such that $\dim_\C V_n<\infty$ 
for all nonnegative integers $n$. Any element $a$ of 
$V_n$ is called a \textit{homogeneous} element of weight $n$; 
we denote $|a|=n$ for any $v\in V_n$.
\vskip1ex
\noindent
(2) The elements $\vac\in V_0$ and $\omega\in V_2$ are called 
the \textit{vacuum} and the \textit{Virasoro element}, respectively.
\vskip1ex
\noindent
(3) There is a linear map 
$Y(-,z):V\rightarrow\End\,V[[z,z^{-1}]]\;(v\mapsto  
Y(a,z)=\sum_{n\in\Z}a_{(n)}z^{-n-1})$ 
satisfying 
$a_{(n)}V_m\subset V_{m+\abs{a}-n-1}$ for all nonnegative integers $m$ 
such that $\vac_{(-1)}= \id_V,\,\vac_{(n)}=0\,(n\neq -1)$, and
$a_{(-1)}\ket{0}=a,\, a_{(n)}\ket{0}=0\,(n>-1)$ for all $a\in V$. 
The $L_n=\omega_{(n+1)}\,(n\in\Z)$ and $\vac_{(-1)}=\id_V$ give rise to 
a module structure for the Virasoro algebra  on $V$ with central charge 
$c_V\in\C$, and $L_0$ is a grading operator, that is, $L_0v=nv$ 
for any $v\in V_n$.
\vskip1ex
\noindent
(4) $(L_{-1}a)_{(n)}=-na_{(n-1)}$ for all $a\in V$ and integers $n$.
\vskip 1ex
\noindent
(5) Let $a^1,\,a^2\in V$ and 
$m,\,n\in\Z$. Then the commutator formula holds:
\begin{equation*}
[{a^1}_{(m)},{a^2}_{(n)}]=
\sum_{j=0}^{\infty}
\binom{m}{j}(v^1_{(j)}a^2)_{(m+n-j)}.
\end{equation*}
(6) Let $a^1,\,a^2\in V$ and 
$m,\,n\in\Z$. Then the associativity formula holds:
\begin{equation*}
(a^1_{(m)}a^2)_{(n)}=
\sum_{j=0}^\infty(-1)^j\binom{m}{j}
\left(
a^1_{(m-j)}a^2_{(n+j)}
-(-1)^{m}a^2_{(m+n-j)}a^1_{(j)}
\right).
\end{equation*}
\end{definition}

Next we shall recall from \cite{NT1} and \cite{MNT} the notions of modules 
for vertex operator algebras.

Let $V$ be a vertex operator algebra and let $\frakg$ be the associated current Lie algebra.
Recall from \cite{MNT} that $\mathscr{U}(\frakg)$ is the current algebra and $I_d\,(d\in\Z)$ is the degreewise completion of a filtration on
the universal enveloping algebra $U(\frakg)$ of $\frakg$ (see \cite{MNT, NT1}).

A {\it weak $V$-module} is defined to be a $\mathscr{U}(\frakg)$-module and a
{\it $V$-module} is a finitely generated $\mathscr{U}(\frakg)$-module $M$ with following properties:
For any $u\in M$ there exists an integer $d$ such that $I_{d}\,u=0$ and 
for any $u\in M$
the vector space linearly spanned by vectors
\begin{equation*}
a^1_{(\abs{a^1}-1+n_1)}a^2_{(\abs{a^2}-1+n_2)}
\dotsb a^s_{(\abs{a^s}-1+n_s)}u\quad
(\forall a^i\in V,\,n_1+n_2+\dotsb+n_s\geq0)
\end{equation*}
is finite-dimensional

\begin{definition}
Let $V$ be a vertex operator algebra and let $C_2(V)$ 
be the vector subspace of $V$ which is linearly spanned 
by elements of the form $a_{(-2)}b\,(n\geq1)$. If the quotient 
space $V/C_2(V)$ is finite-dimensional we say that $V$ satisfies
\textit{Zhu's finiteness condition}.
\end{definition}

\begin{proposition}[{\cite[Corollary 3.2.8]{NT1}}]
\label{proposition:2.1.8}
Let $V$ be a vertex operator algebra and 
let $M$ be a $V$-module. Then $M=\bigoplus_{r\in\C}M_{(r)}$
where 
$M_{(r)}=\{m\in M\;|\; (L_0-r)^nm=0 \text{ for a positive integer $n$}\}$. 
If $V$ satisfies Zhu's finiteness condition 
then $dim_\C M_{(r)}<\infty$ for any complex number $r$.
\end{proposition}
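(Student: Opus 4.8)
The plan is to prove the two assertions in turn, deducing the decomposition from local finiteness of $L_0$ and the finite-dimensionality from Zhu's condition.

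First I would show that $L_0$ acts locally finitely on $M$. Since $\abs{\omega}=2$ we have $L_0=\omega_{(1)}=\omega_{(\abs{\omega}-1+0)}$, so taking all $a^i=\omega$ and all $n_i=0$ (hence $n_1+\dots+n_s=0\ge 0$) in the defining finiteness property of a $V$-module shows that for each $u\in M$ the subspace spanned by $\{L_0^{\,j}u:j\ge 0\}$ is finite-dimensional. Hence $u$ satisfies $p(L_0)u=0$ for some nonzero $p\in\C[x]$, and the primary (generalized eigenspace) decomposition of this finite-dimensional $L_0$-invariant subspace writes $u$ as a finite sum of vectors lying in the spaces $M_{(r)}$. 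Therefore $M=\sum_{r\in\C}M_{(r)}$; since the generalized eigenspaces of a single operator attached to distinct eigenvalues are linearly independent, the sum is direct and $M=\bigoplus_{r\in\C}M_{(r)}$.

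Next I would record how the modes move these eigenspaces. For homogeneous $a$ one has $\omega_{(0)}a=L_{-1}a$ and $\omega_{(1)}a=\abs{a}a$, so the commutator formula together with property (4) gives $[L_0,a_{(m)}]=(L_{-1}a)_{(m+1)}+\abs{a}a_{(m)}=(\abs{a}-m-1)a_{(m)}$. Consequently $a_{(m)}M_{(r)}\subset M_{(r+\abs{a}-m-1)}$; in particular each mode shifts the generalized eigenvalue by an integer, and $a_{(-n)}$ raises it by the nonnegative integer $\abs{a}-1+n$. The substance of the argument is then a spanning set compatible with Zhu's finiteness condition. Choosing homogeneous $a^1,\dots,a^p$ whose images span $V/C_2(V)$, and using the first assertion to take the (finitely many) generators of $M$ to be generalized $L_0$-eigenvectors $u_1,\dots,u_q$ with eigenvalues $r_1,\dots,r_q$, I would invoke a Buhl--Nagatomo--Tsuchiya type reduction, using the associativity relations to rewrite products of modes and conclude that $M$ is spanned by monomials $a^{i_1}_{(-n_1)}\cdots a^{i_k}_{(-n_k)}u_j$ with strictly decreasing indices $n_1>n_2>\dots>n_k\ge 1$. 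By the preceding computation such a monomial lies in the single space $M_{(r_j+\Delta)}$, where $\Delta=\sum_{l}(\abs{a^{i_l}}-1+n_l)$ is a nonnegative integer. Fixing $r$, there are finitely many $j$, each forcing $\Delta=r-r_j$; and since the $n_l$ are distinct positive integers with $\Delta\ge\sum_l(n_l-1)$, both $k$ and each $n_l$ are bounded in terms of $\Delta$. As the indices $i_l$ and $j$ range over finite sets, only finitely many monomials remain, whence $\dim_\C M_{(r)}<\infty$.

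I expect the spanning theorem to be the main obstacle: the weight bookkeeping and the eigenspace count are routine once the spanning set is in hand, but producing it — showing that the associativity relations suffice to reduce every monomial to one with strictly decreasing indices, and that this reduction terminates — is exactly the step where $C_2$-cofiniteness enters and constitutes the real work.
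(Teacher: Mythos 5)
This proposition is not proved in the paper at all: it is quoted verbatim from Nagatomo--Tsuchiya \cite[Corollary 3.2.8]{NT1}, so there is no in-paper argument to compare against. Measured against the cited source, your outline follows the standard route. The first half is complete and correct: the defining finiteness property of a $V$-module with all $a^i=\omega$ and all $n_i=0$ does give local finiteness of $L_0$, and the generalized-eigenspace decomposition plus linear independence of generalized eigenspaces for distinct eigenvalues yields $M=\bigoplus_{r}M_{(r)}$. The weight-shift computation $a_{(m)}M_{(r)}\subset M_{(r+\abs{a}-m-1)}$ and the subsequent counting of monomials with strictly decreasing negative modes are also correct as far as they go.

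The genuine gap is the one you name yourself: the Buhl--Nagatomo--Tsuchiya spanning theorem is the entire content of the second assertion, and invoking it is not proving it. Everything surrounding it (weight bookkeeping, finiteness of the index sets) is routine; the reduction of arbitrary monomials to ones with strictly decreasing indices via the associativity relations modulo $C_2(V)$, together with a termination argument for that rewriting, is where $C_2$-cofiniteness actually enters, and your proposal contains no argument for it. There is also a smaller technical wrinkle you pass over: the spanning set cannot consist only of monomials in strictly negative modes applied to the original generators, because nonnegative modes need not annihilate them. One must first enlarge the generating set to the span of all $a^1_{(\abs{a^1}-1+n_1)}\dotsb a^s_{(\abs{a^s}-1+n_s)}u_j$ with $n_1+\dotsb+n_s\geq 0$ --- finite-dimensional precisely by the definition of a $V$-module adopted in Section \ref{subsect-2-1} --- and only then run the reduction so that the remaining modes are creation-type. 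Without that step the counting of monomials contributing to a fixed $M_{(r)}$ does not close up. As written, the proposal is a correct roadmap to the proof in \cite{NT1}, not a proof.
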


It is not difficult to show the following proposition.

\begin{proposition}\label{prop-2-1-5}
Let $M$ be a simple $V$-module. 
Then there exists a complex number $r$ such that 
$M=\bigoplus_{n\in\N}M_{r+n},\, M_{r}\neq0$, where 
$M_{r+n}=\{m\in M\;|\; (L_0-r-n) m=0\}$. Moreover,  
$\dim_\C M_{r+n}<\infty\,(n\in\N)$ if $V$ satisfies Zhu's finiteness condition.
\end{proposition}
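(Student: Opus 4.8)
The plan is to bootstrap from the generalized eigenspace decomposition $M=\bigoplus_{s\in\C}M_{(s)}$ furnished by Proposition~\ref{proposition:2.1.8}, and to upgrade it, for a simple module, to an honest eigenspace decomposition that is supported on a single coset of $\C/\Z$ and is bounded below. First I would record the weight-shift rule $a_{(n)}M_{(s)}\subset M_{(s+\abs{a}-n-1)}$, which is immediate from the grading axiom, together with the bracket $[L_0,a_{(n)}]=(\abs{a}-n-1)a_{(n)}$.

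To prove that $L_0$ acts semisimply, I would split $L_0=S+N$ into its commuting semisimple and nilpotent parts, where $S$ acts on $M_{(s)}$ as the scalar $s$ and $N=L_0-S$ is locally nilpotent. The weight-shift rule gives $[S,a_{(n)}]=(\abs{a}-n-1)a_{(n)}=[L_0,a_{(n)}]$, so $N$ commutes with every mode $a_{(n)}$ and is therefore a $\mathscr{U}(\frakg)$-module endomorphism of $M$. Then $\Ker N$ is a submodule; it is nonzero because $N$ is nilpotent on each $M_{(s)}$ and hence kills a genuine eigenvector. By simplicity $\Ker N=M$, so $N=0$, that is $M_{(s)}=M_s$ and $M=\bigoplus_s M_s$.

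Next I would pin down the support. Since every mode shifts weight by an integer, $M=\bigoplus_{\bar r\in\C/\Z}\big(\bigoplus_{s\in\bar r}M_s\big)$ is a direct sum of submodules, and simplicity leaves exactly one summand nonzero, so all weights lie in a single coset $r_0+\Z$; in particular their real parts form a discrete set. For boundedness below I would fix a nonzero eigenvector $u\in M_{r_1}$, note that $M=\mathscr{U}(\frakg)u$ by simplicity, and invoke the module finiteness axiom: the span $W_u$ of all vectors $a^1_{(\abs{a^1}-1+n_1)}\cdots a^s_{(\abs{a^s}-1+n_s)}u$ with $n_1+\cdots+n_s\ge0$ is finite-dimensional. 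The key observation is that a monomial in the modes has this ``net non-raising'' form precisely when its total weight shift is $\le0$; hence any homogeneous $v\in M_s$ with $\Real(s)<\Real(r_1)$ is produced from $u$ by such monomials and so lies in $W_u$. As $W_u$ is finite-dimensional and the weights are discrete, only finitely many weights have real part below $\Real(r_1)$, so the weights are bounded below. Taking $r$ to be the minimal weight then yields $M=\bigoplus_{n\in\N}M_{r+n}$ with $M_r\ne0$, and the final finiteness assertion is just $\dim_\C M_{r+n}=\dim_\C M_{(r+n)}<\infty$ under Zhu's condition by Proposition~\ref{proposition:2.1.8}.

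The step I expect to be most delicate is the boundedness-below argument, since it is the only place where the $\mathscr{U}(\frakg)$-module finiteness axiom does real work: I must be careful that projecting $v=Yu$ onto its weight-$s$ component retains only monomials of net weight shift $s-r_1<0$, so that the finiteness axiom applies to each surviving term and confines $v$ to the single finite-dimensional space $W_u$.
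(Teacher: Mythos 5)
The paper offers no proof of this proposition at all---it is dismissed with ``It is not difficult to show the following proposition''---so there is no argument of the authors' to compare yours against. Your proof is correct and is the standard argument one would expect here: semisimplicity of $L_0$ by observing that the locally nilpotent part $N=L_0-S$ commutes with every mode (since $[L_0,a_{(n)}]=(\abs{a}-n-1)a_{(n)}$ matches the shift of the generalized eigenspaces), so $\Ker N$ is a nonzero submodule and simplicity forces $N=0$; integrality of the mode shifts confining the support to one coset of $\C/\Z$; and the finite-dimensionality clause in the definition of a $V$-module giving boundedness below. The only point I would ask you to spell out is the one you flag yourself: you should note that $\mathscr{U}(\frakg)u$ coincides with the span of \emph{finite} monomials in the modes applied to $u$ (because $I_du=0$ for some $d$, an element of the degreewise-completed current algebra acts on $u$ as a finite sum), since this is what lets you project $v$ onto its weight-$s$ component monomial by monomial and conclude that every surviving monomial has net shift $s-r_1<0$ and hence lands in the finite-dimensional space $W_u$. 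With that remark the bounded-below step, and the whole proof, is airtight.
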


\begin{remark}
Let $V$ be a vertex operator algebra satisfying Zhu's finiteness condition
and $M$ be a $V$-module.
Then we can define {the contragredient module} $D(M)$ of $M$.
It is shown that $D(D(M))$ is isomorphic to $M$  (see e.g. \cite[Proposition 4.2.1]{NT1}).
\end{remark}
\subsection{Intertwining operators}\label{subsect-2-2}
Let $V$ be a vertex operator algebra and 
let $L,\,M,\,N$ be weak $V$-modules.
An {\it intertwining operator} of the type $\fusion{L}{M}{N}$ is a linear map
\begin{equation*}
I(-,z):L\to\Hom_\C(M,N)\{z\},\quad 
u\mapsto
I(u,z)=\sum_{\alpha\in\C}u_{(\alpha)}z^{-\alpha-1}
\end{equation*}
satisfying the following conditions:
\medskip

\noindent
(i)
For any $u\in L$, $v\in M$ and a complex number $\alpha$, there exists a non-negative integer $m$ such that
$u_{(\alpha+n)}v=0$ for all $n\ge m$.

\noindent
(ii)
For any $u\in L$, we have
\begin{equation*}
I(L_{-1}u,z)=\frac{d}{dz}I(u,z).
\end{equation*}

\noindent
(iii)
For any $a\in V$, $u_1\in L$, $u_2\in M$, $\alpha\in\C$, and $p,q\in\Z$, we have
\begin{equation*}\label{eq-borcherds}
\begin{split}
&\sum_{i=0}^\infty\binom{p}{i} (a_{(q+i)}u_1)_{(\alpha+p-i)}u_2\\
&=\sum_{i=0}^\infty(-1)^i\binom{q}{i}(a_{(p+q-i)}(u_1)_{(\alpha+i)}u_2-(-1)^q(u_1)_{(\alpha+q-i)}a_{(p+i)}u_2). 
\end{split}
\end{equation*}

We denote the space of all intertwining operators of the type $\fusion{L}{M}{N}$
by $I_V\fusion{L}{M}{N}$ and call $\dim_\C I_V\fusion{L}{M}{N}$ 
the {\it fusion rule} of the type $\fusion{L}{M}{N}$.

Fusion rules have obvious symmetries:

\begin{proposition}[{\cite[Proposition 5.5.2]{FHL}}]\label{prop-1-2}
Let $V$ be a vertex operator algebra satisfying Zhu's finiteness condition and let $L,\,M,\,N$ be
$V$-modules.
Then there exist canonical vector space isomorphisms
\begin{equation*}
I_V\fusion{L}{M}{N}\cong I_V\fusion{M}{L}{N}\cong I_V\fusion{L}{D(N)}{D(M)}.
\end{equation*}
\end{proposition}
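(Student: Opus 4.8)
The plan is to exhibit both isomorphisms as explicit linear maps given by substitution-and-conjugation formulas, and then to check in each case that the formula carries an intertwining operator to an intertwining operator and is invertible. For the first isomorphism $I_V\fusion{L}{M}{N}\cong I_V\fusion{M}{L}{N}$ (skew-symmetry), given $I\in I_V\fusion{L}{M}{N}$ I would define $\Omega(I)\colon M\to\Hom_\C(L,N)\{z\}$ by
\begin{equation*}
\Omega(I)(v,z)u=e^{zL_{-1}}I(u,e^{\pi i}z)v\qquad(u\in L,\ v\in M),
\end{equation*}
where the substitution $z\mapsto e^{\pi i}z$ fixes a branch of the complex powers $z^{-\alpha-1}$ appearing in $I$, and $e^{zL_{-1}}$ acts termwise on $N$. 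I would then verify the three defining conditions for $\Omega(I)$ to be of type $\fusion{M}{L}{N}$: lower truncation (i) is inherited from that of $I$; the $L_{-1}$-derivative property (ii) follows by differentiating the defining formula and invoking both condition (ii) for $I$ and the bracket relation $[L_{-1},I(u,z)]=\frac{d}{dz}I(u,z)$; and condition (iii) is the substantive point addressed below. Invertibility is clear, since the analogous map built with the opposite branch $e^{-\pi i}$ provides a two-sided inverse.

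For the contragredient symmetry $I_V\fusion{L}{M}{N}\cong I_V\fusion{L}{D(N)}{D(M)}$, I would use the module structure on the contragredient $D(N)$, determined by
\begin{equation*}
\langle Y_{D(N)}(a,z)f,n\rangle=\langle f,Y_N(e^{zL_1}(-z^{-2})^{L_0}a,z^{-1})n\rangle,
\end{equation*}
and define the adjoint operator $A(I)\in I_V\fusion{L}{D(N)}{D(M)}$ by
\begin{equation*}
\langle A(I)(u,z)f,m\rangle=\langle f,I(e^{zL_1}(e^{\pi i}z^{-2})^{L_0}u,z^{-1})m\rangle\qquad(u\in L,\ f\in D(N),\ m\in M).
\end{equation*}
Since $L$ is bounded below and $L_1$ lowers weight, $e^{zL_1}u$ is a finite sum and the right-hand side is well defined. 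Zhu's finiteness condition guarantees that $D(N)$ and $D(M)$ are genuine modules and, by the Remark preceding the proposition, that $D(D(-))\cong(-)$ canonically; this yields invertibility of $A$, since applying $A$ twice returns an operator of type $\fusion{L}{M}{N}$ after the identification $D(D(M))\cong M$. Again the three conditions must be checked for $A(I)$.

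The main obstacle is condition (iii), the Borcherds identity, for each constructed operator. For $\Omega(I)$ this is a formal-variable computation converting the identity for $I$ into the one with $L$ and $M$ interchanged, driven by the binomial and formal-delta manipulations underlying skew-symmetry. For $A(I)$ it is more delicate: one must transport the Borcherds identity for $I$ through the dual pairing, which entails the substitution $z\mapsto z^{-1}$ together with the conjugation by $e^{zL_1}(-z^{-2})^{L_0}$ dictated by the contragredient action, and then reconcile the resulting binomial identities. I expect this transport to be the technical heart of the proof. It is cleanest to recast the entire verification in terms of the generating-function (formal-delta) form of the Jacobi identity, which transforms transparently under these substitutions, and only at the end translate back into the component form demanded by condition (iii).
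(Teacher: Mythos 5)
The paper does not actually prove this proposition; it cites \cite[Proposition 5.5.2]{FHL}, whose proof is exactly the construction you describe: the skew-symmetry map $\Omega(I)(v,z)u=e^{zL_{-1}}I(u,e^{\pi i}z)v$ and the adjoint operator $\langle A(I)(u,z)f,m\rangle=\langle f, I(e^{zL_1}(e^{\pi i}z^{-2})^{L_0}u,z^{-1})m\rangle$, with invertibility coming from the opposite branch and from $D(D(M))\cong M$. Your outline therefore coincides with the source's argument, including the fact that the real labor is the transport of the Jacobi identity through these substitutions.
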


The following propositions and the theorem play important roles in the paper.

\begin{proposition}[{cf. \cite[Proposition 7.4]{Zhu96}}]\label{prop-1-5}
Let $V$ be a vertex operator algebra and let $M,\,N$ be simple $V$-modules.
Then 
\begin{equation*}
\dim_\C I_V\fusion{V}{M}{N}=
\begin{cases}
1\quad&M\cong N,\\
0\quad&M\not\cong N.
\end{cases}
\end{equation*}
\end{proposition}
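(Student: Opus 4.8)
The plan is to identify the space $I_V\fusion{V}{M}{N}$ with the space $\Hom_V(M,N)$ of $V$-module homomorphisms and then to apply Schur's lemma. Given an intertwining operator $I$ of type $\fusion{V}{M}{N}$, I would first examine its value on the vacuum. Writing $I(\vac,z)=\sum_{\alpha}\vac_{(\alpha)}z^{-\alpha-1}$ and using the creation property $\omega_{(0)}\vac=0$, that is $L_{-1}\vac=0$, axiom (ii) gives $\frac{d}{dz}I(\vac,z)=I(L_{-1}\vac,z)=0$. Hence $(-\alpha-1)\vac_{(\alpha)}=0$ for every $\alpha$, so that $\vac_{(\alpha)}=0$ for $\alpha\neq-1$ and $I(\vac,z)$ collapses to the single constant operator $\phi:=\vac_{(-1)}\in\Hom_\C(M,N)$. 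The candidate isomorphism will be $I\mapsto\phi$.

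The next step is to run the Borcherds identity (iii) with $u_1=\vac$ and the specialization $q=-1$. Because $a_{(-1+i)}\vac$ vanishes for $i\geq1$ and equals $a$ for $i=0$, the left-hand side reduces to a single intertwining component of $a$, while on the right-hand side $\binom{-1}{i}=(-1)^i$ and the constraint $\vac_{(\beta)}=\delta_{\beta,-1}\phi$ selects at most one surviving term. Carrying out this bookkeeping and splitting according to whether $\alpha$ is a negative integer, a non-negative integer, or a non-integer, I expect to obtain simultaneously that all non-integer components of $I$ vanish and that, for each $n\in\Z$, the $n$-th mode of $I(a,z)$ equals the $n$-th mode of $Y_N(a,z)$ composed with $\phi$ and also $\phi$ composed with the $n$-th mode of $Y_M(a,z)$; compactly, $I(a,z)=Y_N(a,z)\circ\phi=\phi\circ Y_M(a,z)$. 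Reading these two equalities together yields $a_{(n)}\phi=\phi a_{(n)}$ for all $a$ and $n$, i.e. $\phi\in\Hom_V(M,N)$, and shows that $I$ is completely determined by $\phi$, so the map $I\mapsto\phi$ is well defined and injective.

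For surjectivity I would take an arbitrary $\phi\in\Hom_V(M,N)$ and define $I(a,z):=Y_N(a,z)\circ\phi$, which equals $\phi\circ Y_M(a,z)$ precisely because $\phi$ is a homomorphism, and then verify the three axioms. Truncation (i) is inherited from the module action $Y_M$; the derivative property (ii) follows from $\frac{d}{dz}Y_N(a,z)=Y_N(L_{-1}a,z)$; and for (iii) one factors $\phi$ out of every term using $a^N_{(m)}\phi=\phi a^M_{(m)}$, after which the asserted identity becomes exactly the Borcherds (Jacobi) identity satisfied by the $V$-module $M$ with both $a$ and $u_1$ in $V$. Thus every homomorphism arises, and $I_V\fusion{V}{M}{N}\cong\Hom_V(M,N)$ canonically.

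Finally I would invoke Schur's lemma. Since $M$ and $N$ are simple and, by Proposition \ref{prop-2-1-5}, carry an $L_0$-grading with finite-dimensional graded pieces, any nonzero element of $\Hom_V(M,N)$ has kernel and image that are submodules, hence is an isomorphism, and $\End_V(M)=\C\,\id$; therefore $\dim_\C\Hom_V(M,N)$ equals $1$ when $M\cong N$ and $0$ otherwise, which gives the stated values. I expect the main obstacle to be the careful bookkeeping of the second paragraph: tracking the infinite sums and binomial coefficients in axiom (iii) and correctly separating the integer from the non-integer cases, so as to pin down at once both the homomorphism property of $\phi$ and the rigidity $I(a,z)=\phi\circ Y_M(a,z)$. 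Everything else is formal once this computation is organized.
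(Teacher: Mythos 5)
Your argument is correct and is essentially the standard one underlying the result cited here from \cite{Zhu96}: the paper itself offers no proof beyond that citation, and your identification of $I_V\fusion{V}{M}{N}$ with $\Hom_V(M,N)$ via $I\mapsto \vac_{(-1)}$ (using the $L_{-1}$-derivative property and the Borcherds identity with $u_1=\vac$, $q=-1$), followed by Schur's lemma on the finite-dimensional lowest $L_0$-eigenspace, is exactly that argument. No gaps worth flagging.
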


\begin{proposition}[{\cite[Proposition 2.9]{ADL}}]\label{prop-1-3}
Let $V$ be a vertex operator algebra and let $L,\,M,\,N$ be 
$V$-modules. Suppose that $L$ and $M$ are simple $V$-modules.
Let $U$ be a vertex operator subalgebra of $V$ with the same Virasoro element
and let  $L^\prime$ and $M^\prime$ be simple $U$-submodules of $L$ and $M$, respectively.
Then we have
\begin{equation*}
\dim_\C I_V\fusion{L}{M}{N}\le\dim_\C I_U\fusion{L^\prime}{M^\prime}{N}.
\end{equation*}
\end{proposition}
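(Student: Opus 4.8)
The plan is to realize the right-hand space as the target of a restriction map and to prove that this map is injective. Given $\mathcal{Y}\in I_V\fusion{L}{M}{N}$, define $\rho(\mathcal{Y})$ to be the restriction of $\mathcal{Y}$ to $L'$ in the first variable and to $M'$ in the second, viewing $M$ and $N$ as $U$-modules. I would first check that $\rho(\mathcal{Y})$ is a genuine intertwining operator of type $\fusion{L'}{M'}{N}$ over $U$: the truncation condition (i) is inherited automatically; condition (ii) survives because $U$ and $V$ share the same Virasoro element, so $L_{-1}$ is literally the same operator on $L'\subseteq L$; and condition (iii) survives because, for $a\in U$, the $U$-submodules $L'$ and $M'$ are stable under the modes $a_{(n)}$, so every term of the identity stays inside $L'$, $M'$, $N$. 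Thus $\rho\colon I_V\fusion{L}{M}{N}\to I_U\fusion{L'}{M'}{N}$ is a well-defined linear map, and the asserted inequality is exactly the injectivity of $\rho$.

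For injectivity, suppose $\rho(\mathcal{Y})=0$, i.e.\ $u_{(\alpha)}v=0$ for all $u\in L'$, $v\in M'$ and $\alpha\in\C$. Since $L$ and $M$ are simple $V$-modules and $L',M'$ are nonzero, the $V$-submodule generated by $L'$ is all of $L$, and likewise $M=\langle M'\rangle_V$; concretely, every vector of $L$ (resp.\ $M$) is a finite sum of vectors $a^1_{(n_1)}\cdots a^s_{(n_s)}u$ with a seed $u\in L'$ (resp.\ in $M'$). The engine of the proof is the Borcherds identity (iii). Specializing $p=0$ expresses $\mathcal{Y}(a_{(q)}u,z)$, as an operator on $M$, through $\mathcal{Y}(u,z)$ together with the mode actions on $M$ and $N$; specializing $q=0$ gives the commutator formula $u_{(\alpha)}a_{(p)}m=a_{(p)}u_{(\alpha)}m-\sum_{i\ge0}\binom{p}{i}(a_{(i)}u)_{(\alpha+p-i)}m$, which moves a $V$-mode off the second variable. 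Iterating either specialization along the generation of $L$ (resp.\ $M$) shows that $\mathcal{Y}$ is completely determined by its restriction to $L'\times M$ and, symmetrically, by its restriction to $L\times M'$.

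I would then finish by a simplicity argument. Set $R=\{m\in M:\ \ell_{(\alpha)}m=0\ \text{for all }\ell\in L,\ \alpha\in\C\}$. Using the commutator formula above one checks that $R$ is stable under every $a_{(p)}$: if $m\in R$ then $\ell_{(\alpha)}m=0$ and $(a_{(i)}\ell)_{(\beta)}m=0$ because $a_{(i)}\ell\in L$, so $\ell_{(\alpha)}a_{(p)}m=0$; together with invariance under $L_{-1}$ and $L_0$ this makes $R$ a $V$-submodule of $M$. By simplicity $R=0$ or $R=M$, and $R=M$ is precisely the statement $\mathcal{Y}=0$. (An identical argument produces a $V$-submodule of $L$.) Thus it suffices to exhibit a single nonzero $v\in M'$ with $\mathcal{Y}(\ell,z)v=0$ for every $\ell\in L$, i.e.\ to propagate the vanishing on $L'\times M'$ to a full ``column'' $L\times\{v\}$.

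The main obstacle is exactly this propagation, and it is genuinely two-sided. When I try to enlarge the first variable past $L'$ using the $p=0$ identity, the output involves $\mathcal{Y}(u,z)$ evaluated on $a_{(i)}m\in M$, which leaves $M'$; when I try to enlarge the second variable past $M'$ using the commutator formula, the output involves $\mathcal{Y}(a_{(i)}\ell,z)$ with $a_{(i)}u\notin L'$. Hence no one-sided submodule argument closes: each side's recursion feeds data back into the other, and the $U$-submodules of $L$ and $M$ that one naively produces are in general strictly smaller than $L$ and $M$. I expect the resolution to be a simultaneous induction on the pair of conformal weights of the two arguments. Here the shared Virasoro element is essential, since it makes the $L_0$-gradings of $L$, $M$ and $N$ compatible, and Proposition~\ref{proposition:2.1.8} together with Zhu's finiteness condition guarantees that these gradings are bounded below with finite-dimensional homogeneous pieces. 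Consequently sufficiently positive modes $a_{(i)}$ annihilate any fixed vector, so each application of (iii) is a finite sum in which every new term strictly decreases a well-founded weight measure; carrying out this bookkeeping, so that the vanishing spreads from the lowest-weight corner $L'\times M'$ across all of $L\times M$, is where the real work of the proof lies.
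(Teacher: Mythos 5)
Your setup is sound: the restriction map $\rho$ is indeed well defined (the shared Virasoro element handles condition (ii), and stability of $L'$, $M'$ under $U$-modes handles (iii)), the statement is exactly injectivity of $\rho$, and your observation that $R=\{m\in M:\ \ell_{(\alpha)}m=0\ \text{for all }\ell\in L,\ \alpha\in\C\}$ is a $V$-submodule is correct because quantifying over \emph{all} of $L$ closes the commutator-formula recursion. Note that the paper itself offers no proof of this proposition --- it is quoted from \cite[Proposition 2.9]{ADL} --- so the relevant benchmark is the argument there, which rests on the standard fact (essentially Dong--Lepowsky, Proposition 11.9) that a nonzero intertwining operator between simple modules satisfies $\mathcal{Y}(u,z)v\neq0$ for all nonzero $u,v$.

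The genuine gap is the step you yourself flag: propagating the vanishing from $L'\times M'$ to a full column $L\times\{v\}$. Your proposed resolution by ``simultaneous induction on conformal weights'' will not close, for the reason you already half-diagnose: in the $q=0$ (commutator) specialization the correction terms $(a_{(i)}u)_{(\alpha+p-i)}v$ leave $L'$, and in the $p=0$ (iterate) specialization the terms $\mathcal{Y}(u,z)a_{(i)}v$ leave $M'$; moreover the modes $a_{(i)}v$ with $0\le i\le \wt{a}-1$ can \emph{raise} weight, so there is no well-founded measure that strictly decreases, and a double induction on the two ``numbers of modes applied'' feeds each counter into the other without terminating. The missing idea is weak commutativity for intertwining operators: from condition (iii) and the truncation condition one gets $(z_1-z_2)^k\,Y_N(a,z_1)\mathcal{Y}(u,z_2)=(z_1-z_2)^k\,\mathcal{Y}(u,z_2)Y_M(a,z_1)$ for $k\gg0$, and since multiplication by $(z_1-z_2)^k$ is injective on the relevant spaces of formal series, the vanishing of $\mathcal{Y}(u,z_2)v'$ forces the vanishing of $\mathcal{Y}(u,z_2)a_{(n)}v'$ for \emph{all} $n$ and all $a\in V$, without any feedback into the first slot. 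Hence for fixed nonzero $u\in L'$ the set $\{v'\in M:\mathcal{Y}(u,z)v'=0\}$ is a genuine $V$-submodule (not merely a $U$-submodule) containing $M'\neq0$, so it equals $M$ by simplicity; then $\{u'\in L:\mathcal{Y}(u',z)|_M=0\}$ is a $V$-submodule containing $L'$ by the iterate formula (now all second-slot terms vanish identically), so it equals $L$ and $\mathcal{Y}=0$. Without this decoupling device --- or the equivalent rationality-of-matrix-coefficients argument --- your proof does not go through.
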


\begin{theorem}[{\cite[Theorem 2.10]{ADL}}]\label{thm-1-4}
Let $V^1$ and $V^2$ be vertex operator algebras
and let $L^1,\,M^1,\,N^1$ be $V^1$-modules and let
$L^2,\,M^2,\,N^2$ be $V^2$-modules.
Suppose that 
\begin{equation*}
\dim_\C I_{V^1}\fusion{L^1}{M^1}{N^1}<\infty\text{ or }\dim_\C I_{V^2}\fusion{L^2}{M^2}{N^2}<\infty.
\end{equation*}
Then there exists an isomorphism
\begin{equation*}
I_{V^1}\fusion{L^1}{M^1}{N^1}\otimes I_{V^2}\fusion{L^2}{M^2}{N^2}\cong 
I_{V^1\otimes V^2}\fusion{L^1\otimes L^2}{M^1\otimes M^2}{N^1\otimes N^2}.
\end{equation*}
\end{theorem}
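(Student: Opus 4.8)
The plan is to construct the claimed isomorphism explicitly by a factorization map and then prove bijectivity, with surjectivity the delicate part. Given $I_1\in I_{V^1}\fusion{L^1}{M^1}{N^1}$ and $I_2\in I_{V^2}\fusion{L^2}{M^2}{N^2}$, I would define a candidate operator $I_1\otimes I_2$ of the tensor type by
\begin{equation*}
(I_1\otimes I_2)(u_1\otimes u_2,z)(v_1\otimes v_2)
= I_1(u_1,z)v_1\otimes I_2(u_2,z)v_2,
\end{equation*}
extended bilinearly; in modes this reads
\begin{equation*}
(u_1\otimes u_2)_{(\gamma)}(v_1\otimes v_2)
=\sum_{\alpha+\beta=\gamma-1}(u_1)_{(\alpha)}v_1\otimes(u_2)_{(\beta)}v_2 .
\end{equation*}
First I would check the sum is finite for each $\gamma$: the modes $(u_i)_{(\alpha)}$ are nonzero only for $\alpha$ in a fixed coset of $\Z$ (determined by conformal weights) and, by the truncation condition (i), bounded above, so $\alpha+\beta=\gamma-1$ leaves finitely many terms. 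Then I would verify the three defining conditions: (i) truncation is inherited factorwise; (ii) the $L_{-1}$-derivative property follows from $L_{-1}=L_{-1}^{V^1}\otimes\id+\id\otimes L_{-1}^{V^2}$ and the Leibniz rule; and (iii) the Borcherds identity, which is the main verification. For (iii) it suffices to test on the generators $a=a_1\otimes\unit$ and $a=\unit\otimes a_2$ of $V^1\otimes V^2$, since $Y(a_1\otimes\unit,z)=Y^{V^1}(a_1,z)\otimes\id$ and $Y(\unit\otimes a_2,z)=\id\otimes Y^{V^2}(a_2,z)$ act on a single factor; the tensor identity then collapses to the identity for $I_1$ (respectively $I_2$) with the other factor a spectator. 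This yields a bilinear map and hence a linear map $\Phi$ from the tensor product of the two intertwining spaces into $I_{V^1\otimes V^2}\fusion{L^1\otimes L^2}{M^1\otimes M^2}{N^1\otimes N^2}$.

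For injectivity, if $\sum_k\Phi(I_1^k\otimes I_2^k)=0$ with the $I_1^k$ linearly independent, I would pair against suitable vectors in the first factor to isolate each $I_2^k$ and conclude $I_2^k=0$; this is the standard separation-of-variables argument and requires no hypothesis.

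Surjectivity is the main obstacle. Given $F\in I_{V^1\otimes V^2}\fusion{L^1\otimes L^2}{M^1\otimes M^2}{N^1\otimes N^2}$, the goal is to write $F=\sum_k\Phi(I_1^k\otimes I_2^k)$. Applying the Borcherds identity of $F$ with $a=\unit\otimes a_2$ shows that, after fixing arguments in the second factor, $F$ restricts to a $V^1$-intertwining operator of type $\fusion{L^1}{M^1}{N^1}$ valued in $\Hom_\C(M^2,N^2)\{z\}$, and symmetrically for $V^2$. Here the finiteness hypothesis is decisive: assuming say $\dim_\C I_{V^1}\fusion{L^1}{M^1}{N^1}=r<\infty$, I would fix a basis $I_1^1,\dots,I_1^r$ and expand each extracted $V^1$-operator uniquely in this basis; the coefficients assemble into finitely many maps $I_2^1,\dots,I_2^r\colon L^2\to\Hom_\C(M^2,N^2)\{z\}$ with $F=\sum_{k=1}^r\Phi(I_1^k\otimes I_2^k)$. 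The remaining, most laborious task is to prove each $I_2^k$ is genuinely a $V^2$-intertwining operator: its truncation and $L_{-1}$-derivative properties transfer directly from $F$, while its Borcherds identity is forced by applying the Borcherds identity of $F$ with $a=\unit\otimes a_2$ and using the linear independence of the $I_1^k$ to match coefficients. The entire difficulty of the theorem is concentrated in this disentangling step, and it is precisely the finiteness assumption that guarantees the decomposition lands in the algebraic tensor product rather than an infinite completion.
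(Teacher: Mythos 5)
You should note first that the paper itself offers no proof of this statement: it is quoted as Theorem 2.10 of [ADL], so the only comparison available is with that source. Your proposal reconstructs the standard argument (the factorization map $\Phi$, injectivity by separation of variables, surjectivity by expansion in a finite basis of $I_{V^1}\fusion{L^1}{M^1}{N^1}$), which is indeed the architecture of the proof in [ADL] and, for the injective half, of [FHL]. The places you single out as delicate --- surjectivity and the role of the finiteness hypothesis --- are the right ones.

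There is, however, a genuine gap in the surjectivity step. You assert that after fixing the second-factor arguments, $F$ ``restricts to a $V^1$-intertwining operator valued in $\Hom_\C(M^2,N^2)\{z\}$.'' As written this is not yet meaningful: $F(u_1\otimes u_2,z)(v_1\otimes v_2)$ is a single series whose exponents of $z$ are tied to the \emph{total} weight in $N^1\otimes N^2$, so contracting the second tensor factor with a functional does not produce a series whose exponents are governed by the first factor alone, and there is nothing yet to expand in a basis of $I_{V^1}\fusion{L^1}{M^1}{N^1}$. The missing ingredient is the $L_0$-grading of the modules: one must project $F(u_1\otimes u_2,z)(v_1\otimes v_2)$ onto each component $N^1_{(n_1)}\otimes N^2_{(n_2)}$ and reassign to it the separate exponents $\alpha=|u_1|+|v_1|-n_1-1$ and $\beta=|u_2|+|v_2|-n_2-1$, which is legitimate because the $L_0$-conjugation formula (a consequence of conditions (ii) and (iii)) forces the total exponent to be $\alpha+\beta+1$. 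Only after this regrading do the fixed-second-factor slices become honest candidate $V^1$-intertwining operators; the same grading (weights supported on finitely many cosets of $\Z$, since the modules here are finitely generated) is also what makes your sum $\sum_{\alpha+\beta=\gamma-1}$ finite and the truncation axiom inheritable, rather than ``a fixed coset'' as you state. Two smaller points should be supplied as well: the reduction of the Borcherds identity to the generators $a_1\otimes\unit$ and $\unit\otimes a_2$ rests on the lemma that the set of $a$ for which the identity holds is a vertex subalgebra of $V^1\otimes V^2$; and the pairing used for injectivity should be phrased as choosing finitely many matrix coefficients of the linearly independent $I_1^1,\dots,I_1^r$ whose evaluation matrix is invertible. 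With these points filled in, your argument is essentially the proof given in [ADL].
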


\subsection{Bimodules of Frenkel-Zhu}\label{subsect-2-3}
In this subsection we recall the notion of Frenkel-Zhu bimodules.

Let $V$ be a vertex operator algebra and let $M$ be a $V$-module.
Let $O(M)$ be the vector subspace linearly spanned by
\begin{equation*}
a\circ u=\sum_{i=0}^\infty\binom{|a|}{i}a_{(i-2)}u
\end{equation*}
for all homogeneous $a\in V$ and $u\in M$.
We set $A(M)=M/O(M)$ and define the bilinear operation 
$*:V\times M\rightarrow M$
\begin{equation*}
a\ast u=\sum_{i=0}^\infty\binom{|a|}{i}a_{(i-1)}u
\end{equation*}
for any $a\in V$ and $u\in M$.

It is shown in \cite{Zhu96} that the bilinear operation $\ast$ induces 
a structure of an associative algebra, called {\it Zhu's algebra}, on
$A(V)$ with the unity $[\vac]=\vac+O(V)$ and a central element 
$[\omega]=\omega+O(V)$.

Let $\Omega(M)$ be the vector space consisting of elements in 
$M$ which satisfy
\begin{equation*}
a_{(|a|-1+n)}u=0
\end{equation*}
for all $a\in V$ and positive integers $n$.
Then $\Omega(M)$ is canonically a left $A(V)$-module with the action
$o(a)u:=a_{(|a|-1)}u$.

\begin{theorem}[{\cite[Theorem 2.2.2]{Zhu96}}]
The functor $\Omega$ gives rise to a bijection 
between the complete set of inequivalent simple $V$-modules 
and that of inequivalent simple $A(V)$-modules.
\end{theorem}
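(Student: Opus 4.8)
The plan is to construct an explicit inverse to the functor $\Omega$ and to check that both composites are the identity on isomorphism classes. The forward direction is the easier half. Given a simple $V$-module $M$, Proposition \ref{prop-2-1-5} provides $M=\bigoplus_{n\in\N}M_{r+n}$ with lowest weight space $M_r\neq0$. Since every mode $a_{(|a|-1+n)}$ with $n>0$ strictly lowers the $L_0$-weight, it annihilates $M_r$, so $M_r\subseteq\Omega(M)$. I would first establish the reverse inclusion $\Omega(M)=M_r$: if some nonzero $u\in\Omega(M)$ had weight $r+k$ with $k>0$, then every weight-lowering mode kills $u$, so the $\mathscr{U}(\frakg)$-submodule generated by $u$ lies in weights $\geq r+k$ and hence misses $M_r$, giving a nonzero proper $V$-submodule and contradicting simplicity. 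Thus $\Omega(M)=M_r$, which is a left $A(V)$-module via $o(a)u=a_{(|a|-1)}u$. Its simplicity then follows by the same generation principle: a nonzero $A(V)$-submodule $U\subseteq M_r$ generates a $V$-submodule whose weight-$r$ part is exactly $U$ (lowering modes send $U$ below weight $r$, i.e.\ to $0$, while the operators $o(a)$ preserve $U$), so $\langle U\rangle\cap M_r=U$; simplicity of $M$ forces $\langle U\rangle=M$ and hence $U=M_r$.

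The harder direction is to produce, from a simple $A(V)$-module $W$, a simple $V$-module whose image under $\Omega$ is $W$. I would realize $W$ as the top of a generalized Verma module: following the universal construction, build a $\Zpos$-graded weak $V$-module $\bar M(W)$ generated by $W$, in which $W$ sits in degree $0$ carrying the given $A(V)$-action and all modes are defined by the induced $\mathscr{U}(\frakg)$-action. The construction is forced by the requirement that $Y$ satisfy the commutator and associativity relations of Definition \ref{def:chiral}(5)--(6), and the module axioms together with finiteness of the graded pieces must be checked against that definition. One then shows $\bar M(W)$ has a unique maximal graded submodule $J$ with $J\cap W=0$, so that $L(W):=\bar M(W)/J$ is a simple $V$-module. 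Finally I would verify $\Omega(L(W))=W$: the inclusion $W\subseteq\Omega(L(W))$ is built in, and the reverse inclusion follows because any additional lowest-weight vector would generate a proper submodule meeting $W$ trivially, contradicting the maximality of $J$.

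To conclude the bijection I would check the two composites. For a simple $A(V)$-module $W$ we have $\Omega(L(W))\cong W$ by construction. For a simple $V$-module $M$, both $M$ and $L(\Omega(M))$ are simple $V$-modules with the same top $A(V)$-module $\Omega(M)=M_r$; the universal property of $\bar M(M_r)$ gives a surjection onto $M$ whose kernel is a graded submodule meeting the top trivially, hence contained in $J$, and simplicity forces $L(\Omega(M))\cong M$.

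The main obstacle is the backward construction: proving that $\bar M(W)$ is a genuine weak $V$-module and, above all, that its degree-zero piece is exactly $W$ rather than some larger space. This is precisely where one must use that the relations defining $O(M)$, and hence $A(V)$, encode all the constraints a lowest-weight vector is subject to, so that the associativity formula of Definition \ref{def:chiral}(6) imposes no further identities in degree $0$. Verifying the full Borcherds identity for the induced action, together with the accompanying finiteness conditions, is where essentially all of the technical work concentrates; once $\bar M(W)$ and $L(W)$ are in hand, the two mutual-inverse checks above are formal.
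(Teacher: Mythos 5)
The paper does not prove this statement; it is imported verbatim from \cite{Zhu96} (Theorem 2.2.2 there), so there is no in-paper argument to compare against, and your plan is in fact the standard proof from that source. The forward half of your outline is essentially complete: for a simple $V$-module $M$ the identification $\Omega(M)=M_r$ and the simplicity of $M_r$ over $A(V)$ both rest on the claim that the $\mathscr{U}(\frakg)$-submodule generated by a homogeneous vector $u$ is spanned by monomials in which the annihilating modes $a_{(|a|-1+n)}$, $n>0$, stand to the right of all others; this rearrangement is a consequence of the commutator formula in Definition \ref{def:chiral}(5) and deserves to be stated as a lemma rather than absorbed into the phrase ``lies in weights $\geq r+k$''. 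You also correctly observe that $\Omega(M)$ is a graded subspace, so it suffices to test homogeneous $u$, and your argument that an $A(V)$-submodule $U\subseteq M_r$ satisfies $\langle U\rangle\cap M_r=U$ (only products of zero modes contribute in degree zero, and these preserve $U$) is sound.

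The backward half is where the theorem actually lives, and there your text postulates the central object rather than constructing it: the existence of the generalized Verma module $\bar M(W)$ --- a weak $V$-module satisfying the Borcherds identity, with a prescribed universal property and with degree-zero piece carrying exactly the given $A(V)$-action --- is the hard technical core of Zhu's proof (carried out in \cite{FZ} and \cite{Zhu96}, and in cleaner form in later work of Dong--Li--Mason and Li). You diagnose this accurately, including the genuine danger that the degree-zero piece of the universal object could a priori exceed $W$, which is why one passes to the quotient $L(W)=\bar M(W)/J$ by the maximal graded submodule meeting $W$ trivially and only then verifies $\Omega(L(W))=W$; and your check that $L(\Omega(M))\cong M$ correctly uses the universal property. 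So nothing in the outline is wrong, but as a proof it is incomplete at precisely the point you identify: until $\bar M(W)$ is actually built and its module axioms verified, the backward direction remains a roadmap.
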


It is shown in \cite[Theorem 1.5.1]{FZ} that
$A(M)$ is an $A(V)$-bimodule with the above left operation $\ast$ 
and the right operation given by
\begin{equation*}
u\ast a=\sum_{i=0}^\infty\binom{|a|-1}{i}a_{(i-1)}u.
\end{equation*}

Then we have:
\begin{lemma}[{\cite[Lemma 3.5]{Abe1}}]\label{lem-1-10}
Let $M$ be a weak $V$-module.
Then
\begin{equation*}
L_{-1}u\equiv \omega*u-u*\omega-|u|u\mod O(M)
\end{equation*}
for any $L_0$-eigenvector $u\in M$ of eigenvalue $|u|$.
\end{lemma}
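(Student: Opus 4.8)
The plan is to prove the congruence by a direct computation of the two products $\omega * u$ and $u * \omega$ straight from their definitions, specializing $a=\omega$. The only structural inputs are that $|\omega|=2$ and that $L_n=\omega_{(n+1)}$, so that $\omega_{(-1)}=L_{-2}$, $\omega_{(0)}=L_{-1}$, and $\omega_{(1)}=L_0$. Because $u$ is an $L_0$-eigenvector, I will also use $L_0u=|u|u$.

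First I would expand the left action. Since $|\omega|=2$, the coefficient $\binom{2}{i}$ vanishes for $i\geq 3$, so only three terms survive:
\[
\omega * u=\sum_{i\geq 0}\binom{2}{i}\omega_{(i-1)}u=\omega_{(-1)}u+2\omega_{(0)}u+\omega_{(1)}u=L_{-2}u+2L_{-1}u+L_0u.
\]
Next I would expand the right action, where the relevant coefficient is $\binom{|\omega|-1}{i}=\binom{1}{i}$, which vanishes for $i\geq 2$:
\[
u * \omega=\sum_{i\geq 0}\binom{1}{i}\omega_{(i-1)}u=\omega_{(-1)}u+\omega_{(0)}u=L_{-2}u+L_{-1}u.
\]
Subtracting these, the $L_{-2}u$ terms cancel, and substituting $L_0u=|u|u$ gives $\omega * u-u * \omega=L_{-1}u+|u|u$, hence $\omega * u-u * \omega-|u|u=L_{-1}u$.

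I expect no genuine obstacle here: every sum is finite by the lower truncation property of the module action, so the manipulations above are literally valid in $M$ and the identity is in fact exact, the asserted congruence modulo $O(M)$ following a fortiori. The only care required is the correct bookkeeping of the two different binomial families ($\binom{|\omega|}{i}$ for the left action versus $\binom{|\omega|-1}{i}$ for the right action) and the convention $L_n=\omega_{(n+1)}$. The presence of $O(M)$ in the statement is not forced by the calculation but simply records the level at which the formula is meant to be applied, namely the $A(V)$-bimodule $A(M)=M/O(M)$, onto which both sides descend.
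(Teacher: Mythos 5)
Your computation is correct. With the paper's definitions of the two actions and the convention $L_n=\omega_{(n+1)}$, one gets $\omega*u=L_{-2}u+2L_{-1}u+L_0u$ and $u*\omega=L_{-2}u+L_{-1}u$, so $\omega*u-u*\omega-|u|u=L_{-1}u$ holds exactly in $M$, and a fortiori modulo $O(M)$; the paper itself offers no proof of this lemma (it is quoted from \cite[Lemma 3.5]{Abe1}), but its later expansions such as $\omega\ast h=L_{-2}h+2L_{-1}h+L_0h$ and $\vact\ast\omega=L_{-2}\vact+L_{-1}\vact$ in Section \ref{sect-5} agree with your two formulas, confirming the bookkeeping of $\binom{|\omega|}{i}$ versus $\binom{|\omega|-1}{i}$.
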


It is well known that a simple $V$-module $M$ decomposes into 
$L_0$-eigenspaces $M=\bigoplus_{n=0}^\infty M_{r+n},\,M_{r}\neq\{0\}$ 
with a complex number $r$. 
This complex number $r$ is called a {\it conformal weight}\,
of the simple module $M$. 

The importance of $A(V)$-bimodules for determining fusion rules
is explained the following theorem.

\begin{theorem}[{\cite[Theorem 1.5.2]{FZ}, \cite[Proposition 2.10]{Li}}]\label{thm-1-1}
Let $V$ be a vertex operator algebra. Let $L,\,M$ and $N$
be simple $V$-modules with conformal weights $r_1,\,r_2$ and $r_3$, respectively. Then there is an injective linear map 
\begin{equation}\label{eq-contraction}
I_V\fusion{L}{M}{N}\to 
(\Omega(N))^*\otimes_{A(V)}A(L)\otimes_{A(V)}
\otimes_{A(V)}\Omega(M),
\end{equation}
where $(\Omega(N))^*=\Hom_\C(\Omega(N),\C)$ is viewed 
as a right $A(V)$-module in a natural way.
\end{theorem}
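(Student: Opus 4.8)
The plan is to realize the map of \eqref{eq-contraction} by extracting the \emph{leading coefficients} of an intertwining operator and then to prove injectivity by reconstructing the whole operator from those coefficients. Write $A=A(V)$. Since $L,M,N$ are simple we have $\Omega(M)=M_{r_2}$ and $\Omega(N)=N_{r_3}$, both finite dimensional by Proposition~\ref{prop-2-1-5}. Given $I\in I_V\fusion{L}{M}{N}$ and homogeneous $u\in L$, for $v\in\Omega(M)$ the components of $I(u,z)v=\sum_\alpha u_{(\alpha)}v\,z^{-\alpha-1}$ satisfy $u_{(\alpha)}v\in N_{|u|+r_2-\alpha-1}$; as $N_s=0$ unless $s\in r_3+\N$, the lowest–weight component occurs at $\alpha=|u|+r_2-r_3-1$. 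I would set $o(u)v=u_{(|u|+r_2-r_3-1)}v\in\Omega(N)$ and consider the trilinear form $(w',u,v)\mapsto\langle w',o(u)v\rangle$ on $\Omega(N)^*\times L\times\Omega(M)$.

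\emph{Descent.} The next step is to show this form descends to the space in \eqref{eq-contraction}. Specializing the intertwining relation (iii) to suitable $p,q$ and retaining only the leading weight component should yield the identities $o(a*u)=o(a)_N\circ o(u)$ and $o(u*a)=o(u)\circ o(a)_M$ for $a\in V$, where $o(a)_N=a_{(|a|-1)}$ and $o(a)_M=a_{(|a|-1)}$ are the weight–preserving operators defining the $A$–module structures on $\Omega(N)$ and $\Omega(M)$; the analogous specialization shows $o(u)=0$ for $u\in O(L)$, so that $o$ factors through $A(L)=L/O(L)$. With $\Omega(N)^*$ carrying the dual right $A$–action $(w'\cdot a)(n)=w'(o(a)_N n)$, these identities say precisely that $\langle w',o(u)v\rangle$ is $A$–balanced in both tensor slots, so it determines a well–defined element $\psi(I)$ of the space in \eqref{eq-contraction} (using finite dimensionality of $\Omega(M)$ and $\Omega(N)$ to pass between this space and its dual). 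Linearity of $I\mapsto\psi(I)$ is clear.

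\emph{Injectivity, the main obstacle.} Suppose $\psi(I)=0$, i.e. $o(u)v=0$ for all $u\in L$ and $v\in\Omega(M)$; I must deduce $I=0$. The idea is that the leading coefficients determine the entire operator. Because $M$ is simple, $M=\mathscr{U}(\frakg)\Omega(M)$, so every vector of $M$ is reached from $\Omega(M)$ by applying finitely many modes $b_{(k)}$; using (iii) one rewrites $u_{(\alpha)}(b_{(k)}v)$ as a combination of terms $(b_{(\cdot)}u)_{(\cdot)}v$ (first argument still in $L$) and $b_{(\cdot)}(u_{(\cdot)}v)$ (an action moved onto the output in $N$), and induction on the number of applied modes reduces any matrix coefficient $u_{(\alpha)}v'$ with $v'\in M$ to coefficients whose argument lies in $\Omega(M)$. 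For $v\in\Omega(M)$ one must still kill the components in the higher spaces $N_{r_3+k}$, not merely the leading one; here I would invoke the contragredient symmetry $I_V\fusion{L}{M}{N}\cong I_V\fusion{L}{D(N)}{D(M)}$ of Proposition~\ref{prop-1-2}, under which pairing a component of $I$ against a weight–$(r_3+k)$ vector of $D(N)$ becomes a leading coefficient of the adjoint operator of type $\fusion{L}{D(N)}{D(M)}$. Those adjoint leading coefficients vanish by hypothesis, and nondegeneracy of the pairing $N\times D(N)\to\C$ then forces every $u_{(\alpha)}v$ to vanish, whence $I=0$. The delicate points are the bookkeeping in this inductive reduction via (iii) and the verification that the contragredient pairing converts arbitrary components into leading ones; I would also resist attempting surjectivity, since not every element of the target space integrates to an intertwining operator, which is exactly the defect in \cite{FZ} corrected in \cite{Li}, so only injectivity is asserted here.
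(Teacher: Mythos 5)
The paper does not prove this theorem at all --- it is imported verbatim from \cite[Theorem 1.5.2]{FZ} and \cite[Proposition 2.10]{Li} --- so your proposal can only be measured against the standard argument in those sources. Your construction of the map is that standard argument and is sound: taking the leading coefficient $o(u)=u_{(|u|+r_2-r_3-1)}\big|_{\Omega(M)}$, deriving $o(a\ast u)=o(a)\circ o(u)$, $o(u\ast a)=o(u)\circ o(a)$ and $o(O(L))=0$ from specializations of axiom (iii), and packaging the resulting $A(V)$-balanced form as an element of the contraction (using finite-dimensionality of $\Omega(M)$ and $\Omega(N)$). Your decision to prove only injectivity is also the right one; surjectivity is exactly the false claim of \cite{FZ} that \cite{Li} corrects.

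The gap is in the injectivity argument, at the step handling the components $u_{(\alpha)}v\in N_{r_3+k}$ with $k>0$ and $v\in\Omega(M)$. You assert that pairing such a component against a weight-$(r_3+k)$ vector $w'$ of $D(N)$ ``becomes a leading coefficient of the adjoint operator'' of type $\fusion{L}{D(N)}{D(M)}$. It does not: the leading coefficient of the adjoint operator $I^t$ is by definition its restriction to the lowest weight space $\Omega(D(N))=D(N)_{r_3}$, whereas your $w'$ lies in $D(N)_{r_3+k}$ with $k>0$, so $\langle w',u_{(\alpha)}v\rangle$ corresponds to a \emph{non-leading} component of $I^t$ and is not controlled by the hypothesis $\psi(I)=0$ (which is equivalent to $\psi(I^t)=0$, since the leading coefficients of $I$ and $I^t$ are transposes of one another up to scalars). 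What is needed is a second reduction, symmetric to the one you run on $M$: since $N$ is simple, $D(N)$ is generated by $\Omega(D(N))$, and the commutator formula lets one express $\langle a_{(m)}w',u_{(\alpha)}v\rangle$ as a combination of $\langle w',\tilde u_{(\tilde\alpha)}\tilde v\rangle$; for $w'\in\Omega(D(N))$ and $v\in\Omega(M)$ the only possibly nonzero coefficient is the leading one for pure weight reasons. (Equivalently: the annihilator $\{w'\in D(N)\,:\,\langle w',I(u,z)v\rangle=0\ \forall u\in L,\,v\in M\}$ is a $V$-submodule of $D(N)$, and once it is shown to contain the nonzero space $\Omega(D(N))$, simplicity of $D(N)$ forces $I=0$.) Note also that this second reduction is not optional even for your first one: rewriting $u_{(\alpha)}(b_{(k)}v)$ via (iii) produces terms $b_{(\cdot)}(u_{(\cdot)}v)$ with modes acting on the output in $N$, and disposing of these already requires moving modes onto the $D(N)$ side, so the two inductions must be run together. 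The gap is fillable, but the contragredient trick as you state it does not close it.
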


We call  the right-hand side of \eqref{eq-contraction} the {\it contraction} of $(\Omega(N))^*,\,A(L)$ and $\Omega(M)$
and hereafter we denote it by $(\Omega(N))^*\cdot A(L)\cdot\Omega(M)$. 
As an immediate consequence we have
\begin{equation*}
\dim_\C I_V\fusion{L}{M}{N}\leq
\dim_\C (\Omega(N))^*\cdot A(L)\cdot\Omega(M).
\end{equation*}

\section{The vertex operator algebra $\mathscr{F}^+$ and their simple modules}\label{sect-3}
\subsection{The symplectic fermionic vertex operator superalgebra $\mathscr{F}$}\label{subsect-3-1}
Let $\h$ be a $2d$-dimensional vector space over $\C$ 
with a non-degenerate skew-symmetric bilinear form 
$\langle\,\,,\,\rangle$.
Let $\{e^i,f^i\;|\;i=1,\,\dotsc,\,d\,\}$ be a basis of $\h$ such that
\begin{equation*}
\langle e^i,e^j\rangle=\langle f^i,f^j\rangle=0\text{ and }
\langle e^i,f^j\rangle=-\delta_{ij}
\end{equation*}
for all $1\leq i,\,j\leq d$. There is a Lie superalgebra structure on 
the affinization on $\h$, i.e., $\hat\h=\h\otimes\C[t,t^{-1}]\oplus\C K$ 
with even part $\C K$, odd part $\h\otimes\C[t,t^{-1}]$.
The supercommutation relations 
\begin{align*}
&[\phi\otimes t^{m},\psi\otimes t^{n}]_{+}
=m\langle\phi,\psi\rangle\delta_{m+n,0}K\quad(\phi,\psi\in\h,m,n\in\Z),\\
&[K,\hat\h]=0.
\end{align*}

Let  $\C_+$ be a one-dimensional module  of the Lie supersubalgebra 
$\hat\h^+=\h\otimes\C[t]\oplus\C K$ of $\hat\h$ on which 
$\h\otimes\C[t]$ acts trivially and $K$ acts as $1$. 
We denote by $\mathscr{F}$ the induced module 
${\rm Ind}^{\hat\h}_{\hat\h^+}\C_+$.   
It is immediate that $\mathscr{F}$ is isomorphic to the exterior algebra 
$\bigwedge(\hat\h_-)$ as vector spaces 
where $\hat\h_-=\h\otimes t^{-1}\C[t^{-1}]$ .

We denote by $h_{(n)}$ the action of $h\otimes t^{n}$ on 
$\mathscr{F}$ for any $h\in\h$ and $n\in\Z$. 
Then any vector in $\mathscr{F}$ is a linear combination 
of the vector $|0\rangle=1\otimes1$ and the vectors of the form 
\begin{equation}\label{aiueoh}
v=h^{1}_{(-n_1)}\dotsb h^{r}_{(-n_r)}|0\rangle\quad 
(h^1,\,\dotsc,\,h^{r}\in\h,\,n_1,\dotsc,\,n_r\in\Zplus).
\end{equation}

Let $\mathscr{F}^+$ and $\mathscr{F}^-$ be the subspaces of 
$\mathscr{F}$ linearly spanned by vectors \eqref{aiueoh} 
for even $r$ 
and odd $r$, respectively. As shown in \cite{Abe07} the $\hat{\h}$-module $\mathscr{F}$ becomes  a  simple vertex operator superalgebra 
with the even part $\mathscr{F}^+$ and the odd part $\mathscr{F}^-$. 
The vertex operator $Y(v,z)$ associated with a vector $v$ of the form 
\eqref{aiueoh} is  
\begin{equation*}
Y(v,z)=\NO \partial^{(n_1-1)}h^{1}(z)\dotsb\partial^{(n_r-1)}h^{r}(z)\NO
\end{equation*}
where
\begin{equation*}
h(z)=\sum_{n\in\Z}h_{(n)}z^{-n-1}\quad (h\in \h),\quad
\partial^{(k)}=\dfrac{1}{k!}\left(\dfrac{d}{dz}\right)^{k},
\end{equation*}
and the normal ordering procedure $\NO-\NO$ is defined as 
\begin{equation*}
\NO h^{1}_{(m_1)}\cdots h^{r}_{(m_r)}\NO=
\begin{cases}
h^{1}_{(m_1)}\NO h^{2}_{(m_2)}\dotsb h^{r}_{(m_r)}\NO
&\text{if $m_1< 0$},\\
&\\
(-1)^{r-1}\NO h^{2}_{(m_2)}\dotsb h^{r}_{(m_r)}\NO h^{1}_{(m_1)}
&\text{if $m_1\geq 0$},
\end{cases}
\end{equation*}
recursively for all $h^i\in\h$ and $m_i\in\Z$.  
The vacuum vector is $|0\rangle$ and 
\begin{equation*}
\w=\sum_{i=1}^{d}e^i_{(-1)}f^i_{(-1)}\vac
\end{equation*}
is a conformal vector of central charge $-2d$.
Set $Y(\omega,z)=\sum_{n\in\Z}L_nz^{-n-2}$.
Then $[L_0, h_{(-n)}]=nh_{(-n)}$ for all $h\in\frakh$ and integers $n$ (see \cite{Abe07}).
It is not difficult to see that an element of the form  
\eqref{aiueoh} has weight $n_1+\cdots+n_r$.
Hence $\mathscr{F}_0=\C|0\rangle$ and 
$\mathscr{F}_1=\langle\; h_{(-1)}|0\rangle\;|\;\h\in \h\;\rangle_{\C}\cong \h$ 
so that we can identify $\h$ with $\mathscr{F}_1$ 
by the  correspondence 
$h\mapsto h_{(-1)}|0\rangle$ for any $h\in\mathfrak{h}$.
We call the vertex operator superalgebra $\mathscr{F}$ 
the \textit{symplectic fermionic  vertex operator superalgebra}. 

\subsection{Twisted $\mathscr{F}$-modules}\label{subsect-3-2}
The vertex operator superalgebra $\mathscr{F}$ has the automorphism 
$\theta$ defined by $\theta(u)=\pm u$ for $u\in\mathscr{F}^{\pm}$,
respectively.  In this section we will construct a\, $\theta$-twisted module
of the vertex operator superalgebra $\mathscr{F}$ following \cite{Abe07}. 
This construction is resemble to the one of  twisted modules 
for the Heisenberg vertex operator algebra. 

Let $\hat\h_{\theta}=\h \otimes t^{\frac{1}{2}}\C[t,t^{-1}] \oplus\C K$.
Then $ \hat\h_{\theta}$ is a Lie superalgebra with even part $\C K$ and
odd part $\h \otimes t^{\frac{1}{2}}\C[t,t^{-1}]$. 
Its supercommutation relations
are given by
\begin{align*}
&[h^1\otimes t^m,h^2\otimes t^n]_+
=m\langle h^1,h^2\rangle \delta_{m+n,0}K
\quad\bigl(h^1,\,h^2\in\h,\,m,n\in\frac{1}{2}+\Z\bigr),\\
&[K,\hat\h_\theta]=0.
\end{align*}

Let $\C \vact$ be a one-dimensional module 
for the subalgebra 
$\hat\h_\theta^+=\h\otimes t^{\frac{1}{2}}\C[t]\oplus\C K$ 
of $\hat\h_\theta$ defined by $\hat\h_\theta^+|\theta\rangle=0$ and 
$K|\theta\rangle=|\theta\rangle$. Then we obtain an induced $\hat\h_\theta$-module $\mathscr{F}_t
={\rm Ind}_{\hat\h_\theta^+}^{\hat\h_\theta}\C |\theta\rangle$.
By abuse of a notation, but for short, we denote the action 
of $h\otimes t^n\,(h\in\h,\,n\in\frac{1}{2}+\Z)$ on $\mathscr{F}_t $ by $h_{(n)}$
and introduce a field $h(z)=\sum_{n\in\frac{1}{2}+\Z}h_{(n)}z^{-n-1}$
on $\mathscr{F}_t$.

We define a field $W(v,z)$ associated with a vector  $v$ 
of the type \eqref{aiueoh} by 
\begin{equation*}
W(v,z)=\NO \partial^{(n_1-1)}h^{1}(z)\dotsb\partial^{(n_r-1)}h^{r}(z)\NO,
\end{equation*}
where the normal ordering procedure $\NO-\NO$ is defined as 
\begin{equation*}
\NO h^{1}_{(m_1)}\dotsb h^{r}_{(m_r)}\NO=
\begin{cases}
h^{1}_{(m_1)}\NO h^{2}_{(m_2)}\dotsb h^{r}_{(m_r)}\NO
&\text{if $m_1< 0$},\\
&\\
(-1)^{r-1}\NO h^{2}_{(m_2)}\dotsb h^{r}_{(m_r)}\NO h^{1}_{(m_1)}
&\text{if $m_1> 0$}
\end{cases}
\end{equation*}
for any $h^i\in\h$ and $m_i\in\frac{1}{2}+\Z$, recursively. 

Let $c_{mn}$ be the rational numbers defined by
\begin{equation*}
\sum_{m,n\in\Z_{\geq 0}}c_{mn}x^{m}y^n
=-\log\Bigl(\frac{(1 + x)^{\frac{1}{2}}+(1+y)^{\frac{1}{2}}}{2}\Bigr),
\end{equation*}
and set
\begin{equation*}
\Delta(z)=\sum_{m,\,n\geq 0}\sum_{i=1}^{d}c_{mn}e^i_{(m)}f^i_{(n)}z^{-m-n}.
\end{equation*}

Let define a filed  $Y^\theta(v,z)$ by 
$Y^\theta(v,z)=W(e^{\Delta(z)}v,z)$ for any $v\in \mathscr{F}$. 
Then  the pair $(\mathscr{F}_t ,Y^\theta(-,z))$ 
gives rise to a $\theta$-twisted $\mathscr{F}$-module  
(see \cite{Xu98} for the definition of twisted modules).

The conformal weight of $\mathscr{F}_t$ with respect to the operator $L_0=\omega_{(1)}$ is $-\frac{1}{8}$ and we have 
\begin{equation*}
\mathscr{F}_t 
=\bigoplus_{n=0}^\infty \mathscr{F}_{t,-\frac{1}{8}+\frac{n}{2}}.
\end{equation*}
It is obvious that $\mathscr{F}_{t,-\frac{1}{8}}=\C |\theta\rangle$ and 
$\mathscr{F}_{t,\frac{3}{8}}
=\langle\; h_{(-\frac{1}{2})}|\theta\rangle\;|\;h\in\h\;\rangle_\C$.
Furthermore, we see that $\mathscr{F}_t $ is a simple $\theta$-twisted module. 

\begin{remark}
The even part $\mathscr{F}_t ^+$ and the odd part $\mathscr{F}_t ^-$ of 
$\mathscr{F}_t $ are respectively spanned by elements of the form 
\begin{equation*}
h^{1}_{(-n_1)}\cdots h^{r}_{(-n_r)}|\theta\rangle
\end{equation*}
for even and odd non-negative integers $r$, $h^1,\,\dotsc,\,h^r\in\h$ and $n_1,\dotsc,n_r\in\frac{1}{2}+\Zplus$.  
Obviously we have 
\begin{equation*}
\mathscr{F}_t ^+=\bigoplus_{n=0}^\infty \mathscr{F}_{t,-\frac{1}{8}+n},\quad 
\mathscr{F}_t ^-=\bigoplus_{n=0}^\infty \mathscr{F}_{t,n+\frac{3}{8}}. 
\end{equation*}
\end{remark}

\subsection{Simple $\mathscr{F}^+$-modules}\label{subsect-3-3}
It is obvious that the even part $\mathscr{F}^+$ of the symplectic fermionic vertex operator superalgebra $\mathscr{F}$
is a vertex operator algebra of central charge $-2d$
and that the odd part $\mathscr{F}^-$ is an $\mathscr{F}^+$-module.
Since $\mathscr{F}^+$ is spanned by vectors with even length,
the twisted vertex operator $Y^\theta(v,z)$ for $v\in\mathscr{F}^+$ belongs to $\End_\C(\mathscr{F}_t)[[z,z^{-1}]]$,
which shows that $\mathscr{F}^\pm_t$ are $\mathscr{F}^+$-modules.

\begin{theorem}[{\cite[Theorem 3.13, Theorem 4.2]{Abe07}}]\label{them1}
\begin{enumerate}
\item[{\rm(i)}]
The even part $\mathscr{F}^+$ is a simple vertex operator algebra satisfying Zhu's finiteness condition..
\item[{\rm (ii)}]
Any simple $\mathscr{F}^+$-module is isomorphic to one of $\mathscr{F}^\pm,\,\mathscr{F}^\pm_t$.
\end{enumerate}
\end{theorem}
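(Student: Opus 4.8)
The plan is to treat the two assertions separately. For (i) I would establish Zhu's finiteness condition by an explicit computation modulo $C_2(\mathscr{F}^+)$ and deduce simplicity from the free-field realization, while for (ii) I would classify simple modules through Zhu's algebra $A(\mathscr{F}^+)$, invoking the bijection of Zhu's theorem stated above.

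For the finiteness condition, I would first show that $\mathscr{F}/C_2(\mathscr{F})$ is finite-dimensional. The key point is that $h_{(-n)}|0\rangle\in C_2(\mathscr{F})$ for every $n\geq 2$: indeed $h_{(-2)}|0\rangle=(h_{(-1)}|0\rangle)_{(-2)}|0\rangle$ lies in $C_2(\mathscr{F})$ by definition, and the relation $(L_{-1}a)_{(n)}=-na_{(n-1)}$ together with the $L_{-1}$-stability of $C_2(\mathscr{F})$ propagates this to all higher modes. Modulo $C_2(\mathscr{F})$ only products of the weight-one generators $h_{(-1)}|0\rangle$ survive, and the supercommutation relations force them to anticommute, so that $\mathscr{F}/C_2(\mathscr{F})\cong\bigwedge\h$, a space of dimension $2^{2d}$. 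Passing to the even part gives $\dim_\C\mathscr{F}^+/C_2(\mathscr{F}^+)<\infty$. For simplicity I would argue directly from the Fock-space structure: any nonzero $L_0$-graded ideal $I$ of $\mathscr{F}^+$ contains a homogeneous vector built from the modes $h_{(-n)}$, and by repeatedly applying the positive modes $h_{(m)}$ ($m>0$), which lower the weight and act through the fermionic relations, one reduces it to a nonzero multiple of $|0\rangle$; hence $|0\rangle\in I$ and $I=\mathscr{F}^+$. Equivalently, since $\mathscr{F}$ is self-contragredient the induced invariant form restricts nondegenerately to $\mathscr{F}^+$, and $\mathscr{F}^+_0=\C|0\rangle$ then forces simplicity.

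For (ii), by Zhu's theorem the simple $\mathscr{F}^+$-modules correspond bijectively to the simple modules of $A(\mathscr{F}^+)$, which is finite-dimensional by (i). The bulk of the argument is the determination of $A(\mathscr{F}^+)$: using that $a\circ u\in O(\mathscr{F}^+)$, the product $a\ast u$, and Lemma \ref{lem-1-10} to control the images of $L_{-1}$-descendants, I would reduce $A(\mathscr{F}^+)$ to a concrete finite-dimensional algebra generated by the classes of the low-weight states built from $e^i_{(-1)}f^j_{(-1)}|0\rangle$, and show it admits exactly four inequivalent simple modules. In parallel I would verify that the four candidates $\mathscr{F}^\pm$ and $\mathscr{F}^\pm_t$ are simple as $\mathscr{F}^+$-modules (again via the reduction-to-lowest-weight argument, now in the untwisted and $\theta$-twisted Fock spaces) and compute their lowest-weight spaces $\Omega(\mathscr{F}^\pm),\,\Omega(\mathscr{F}^\pm_t)$ as $A(\mathscr{F}^+)$-modules; these have dimensions $1,\,2d,\,1,\,2d$ respectively and realize the four simple $A(\mathscr{F}^+)$-modules. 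The bijection then shows the list is exhaustive.

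The main obstacle is the explicit computation of $A(\mathscr{F}^+)$ and the proof that it has precisely four simple modules independently of $d$. The subtlety is that although the two $2d$-dimensional lowest-weight spaces grow with $d$, the symplectic pairing $\langle\,,\rangle$ binds their elements into single irreducible $A(\mathscr{F}^+)$-modules, so the count stays at four; controlling the noncommutative multiplication of $A(\mathscr{F}^+)$ and ruling out any further simple module is where the real work lies.
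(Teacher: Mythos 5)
First, a structural point: the paper offers no proof of this theorem at all --- it is quoted verbatim from \cite[Theorem 3.13, Theorem 4.2]{Abe07}, so there is no in-paper argument to compare yours against. Your proposal is essentially a sketch of the strategy of that reference (explicit $C_2$-analysis plus classification of simple modules through Zhu's algebra $A(\mathscr{F}^+)$), so judged as a plan it points in the right direction; but as a proof it has one step that is genuinely false as stated and one large block that is deferred rather than carried out.

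The false step is ``Passing to the even part gives $\dim_\C\mathscr{F}^+/C_2(\mathscr{F}^+)<\infty$.'' Finite-dimensionality of $\mathscr{F}/C_2(\mathscr{F})$ does not descend to the even part, because $C_2(\mathscr{F}^+)$ is spanned only by $a_{(-2)}b$ with \emph{both} $a,b\in\mathscr{F}^+$; the elements $e^i_{(-2)}e^j\vac$, $e^i_{(-2)}f^j\vac$, $f^i_{(-2)}f^j\vac$ lie in $C_2(\mathscr{F})$ but not visibly in $C_2(\mathscr{F}^+)$, since $e^i_{(-1)}\vac$ is odd. This is precisely why Section \ref{subsect-4-1} lists $E^{ij},H^{ij},F^{ij}$ as \emph{additional} strong generators of $\mathscr{F}^+$ beyond the quadratics $e^{ij},h^{ij},f^{ij}$. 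The $C_2$-cofiniteness of a fixed-point subalgebra is in general a hard problem, and in this instance it is the actual content of \cite[Theorem 3.13]{Abe07}, proved there by a direct computation in $\mathscr{F}^+$; you cannot get it by restriction from $\mathscr{F}$. (Your treatment of $\mathscr{F}/C_2(\mathscr{F})\cong\bigwedge\h$ and your simplicity argument for $\mathscr{F}^+$ are fine.)

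For part (ii) your outline is the correct one --- Zhu's bijection reduces the classification to the simple modules of $A(\mathscr{F}^+)$, the four candidates are pairwise non-isomorphic with lowest-weight spaces of dimensions $1,2d,1,2d$ and distinct conformal weights $0,1,-\tfrac18,\tfrac38$, and one must show $A(\mathscr{F}^+)$ admits no further simples. But you explicitly leave the determination of $A(\mathscr{F}^+)$ and the count of its irreducibles as ``where the real work lies,'' and that is indeed the entire substance of \cite[Theorem 4.2]{Abe07}. As submitted, the proposal therefore identifies the right architecture but proves neither of the two nontrivial assertions; both gaps would have to be filled by the explicit computations of \cite{Abe07} (or equivalents) before this could stand as a proof.
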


\section{Generators of bimodules for $A(\mathscr{F}^+)$ for $d=1$ case}\label{sect-4}
In this section we will find generators of bimodules for 
$A(\mathscr{F}^+)$ for $d=1$.

\subsection{Zhu's algebra $A(\mathscr{F}^+)$}\label{subsect-4-1}
Let us recall from \cite{Abe07} the generators of the Zhu's algebra of $\mathscr{F}^+$.

By \cite[Corollary 3.8]{Abe07} it follows that the vertex operator algebra
$\mathscr{F}^+$ is strongly generated by the Virasoro element 
$\w=\sum_{i=1}^de_{(-1)}^if^i$ and the elements
\begin{align*}
&e^{ij}=e^{i}_{(-1)}e^{j},\quad h^{ij}=e^{i}_{(-1)}f^j,\quad 
f^{ij}=f^{i}_{(-1)}f^j,\\
&E^{ij}=\frac{1}{2}\big(e^{i}_{(-2)}e^{j}+e^{j}_{(-2)}e^{i}\big),\\
&H^{ij}=\frac{1}{2}\big(e^{i}_{(-2)}f^j+f^{j}_{(-2)}e^i\big),\\
&F^{ij}=\frac{1}{2}\big(f^{i}_{(-2)}f^j+f^{j}_{(-2)}f^i\big) 
\end{align*}
for  $1\leq i,\,j \leq d$. Hence, Zhu's algebra $A(\mathscr{F})$ is generated by the set 
\begin{equation*}
\{e^{ij},\, h^{ij},\, f^{ij},\, E^{ij},\, H^{ij},\, F^{ij}\}_{i,j=1}^d
\end{equation*}
(see \cite[Proposition 2.5]{Abe07}).
We denote $\mathscr{F},\,\mathscr{F}_t,\,\mathscr{F}^\pm$ and
$\mathscr{F}^\pm_t$ for $d=1$ by $\mathscr{T},\,\mathscr{T}_t,\,
\mathscr{T}^\pm$ and $\mathscr{T}^\pm_t$ , respectively.
In the case that $d=1$ we see that  $e^{1,1}=f^{1,1}=0,\, \w=h^{11}$. We set $E=E^{11}$, $H=H^{11}$ and $F=F^{11}$.
Therefore Zhu's algebra $A(\mathscr{T}^+)$ is generated by $\w$, $E$, $H$ and $F$. 

\subsection{Generators of the $A(\mathscr{T}^+)$-bimodule $A(\mathscr{T}^-)$}\label{subsect-4-2}
Let use set
\begin{equation*}
U=\langle a^1\ast a^2\ast\dotsb\ast a^r \ast h\ast b^1\ast b^2\dotsb \ast b^s\,|\,a^i, b^j\in\mathscr{T}^+,
h\in\frakh\rangle_\C.
\end{equation*}

Let $(\mathscr{T}^-)^{(r)}$ be the vector subspace of $\mathscr{T}^-$ linearly spanned by vectors of the form 
\begin{equation*}
h^1_{(-n_1)}h^2_{(-n_2)}\dotsb h^{r}_{(-n_s)}\vac,\,h^i\in\frakh,\,n_i>0,\,s\le r.
\end{equation*}
Notice that $(\mathscr{T}^-)^{(2m+1)}=(\mathscr{T}^-)^{(2m+2)}$ for nonnegative integers $m$.
Then we see that
\begin{align}
&(h^1_{(-n_1)}h^2_{(-n_2)}\vac)_{(-1)}u\equiv h^1_{(-n_1)}h^2_{(-n_2)}u \mod (\mathscr{T}^-)^{(r)},\label{eq-0708-1}\\
&(h^1_{(-n_1)}h^2_{(-n_2)}\vac)_{(j)}u\equiv0 \mod (\mathscr{T}^-)^{(r)}
\end{align}
for all $u\in(\mathscr{T}^-)^{(r)}$, $\phi, \psi\in\frakh$, $n_1, n_2>0$
and nonnegative integers $j$.
Hence we have
\begin{equation}\label{eq-0708-3}
\begin{split}
(h^1_{(-n_1)}h^2_{(-n_2)}\vac)\ast u&=\sum_{i=0}^\infty\binom{n_1+n_2}{i}(h^1_{(-n_1)}h^2_{(-n_2)})_{(i-1)}u\\
&\equiv \phi_{(-n_1)}\psi_{(-n_2)}u \mod (\mathscr{T}^-)^{(r)}
\end{split}
\end{equation}
for all $u\in(\mathscr{T}^-)^{(r)}$, $h^1, h^2\in\frakh$ and $n_1, n_2>0$.

\begin{proposition}
The bimodule $A(\mathscr{T}^-)$ is generated by $[e]$ and $[f]$.
\end{proposition}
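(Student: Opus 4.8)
The plan is to prove that the $A(\mathscr{T}^+)$-sub-bimodule $B\subseteq A(\mathscr{T}^-)$ generated by $[e]$ and $[f]$ coincides with $A(\mathscr{T}^-)$. Since $\mathscr{T}^-$ is spanned by the monomials $h^1_{(-n_1)}\cdots h^r_{(-n_r)}\vac$ with $r$ odd, $h^i\in\frakh$ and $n_i>0$, it suffices to show that the class of each such monomial lies in $B$. I would organize this as an induction on the odd length $r$, proving the slightly stronger statement that the image of the whole filtration piece $(\mathscr{T}^-)^{(r)}$ in $A(\mathscr{T}^-)$ is contained in $B$.

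For the base case $r=1$ I would run an inner induction on the mode $n$ to show $[h_{(-n)}\vac]\in B$ for $h\in\{e,f\}$, hence for all $h\in\frakh=\C e\oplus\C f$. For $n=1$ these are the generators $[e]$ and $[f]$ themselves. For the step, I use the relation $L_{-1}(h_{(-n)}\vac)=n\,h_{(-n-1)}\vac$ together with Lemma~\ref{lem-1-10} applied to the weight-$n$ vector $u=h_{(-n)}\vac$, which yields
\[
n\,[h_{(-n-1)}\vac]=[\w]\ast[h_{(-n)}\vac]-[h_{(-n)}\vac]\ast[\w]-n\,[h_{(-n)}\vac].
\]
Since $[\w]\in A(\mathscr{T}^+)$ and $n\neq0$, the right-hand side lies in $B$ by the inner induction hypothesis, and we may solve for $[h_{(-n-1)}\vac]$.

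For the inductive step $r-2\to r$ (with $r$ odd, $r\geq3$), I take a length-$r$ monomial $v=h^1_{(-n_1)}\cdots h^r_{(-n_r)}\vac$ and set $u=h^3_{(-n_3)}\cdots h^r_{(-n_r)}\vac\in(\mathscr{T}^-)^{(r-2)}$. Applying the congruence \eqref{eq-0708-3} with filtration index $r-2$ gives
\[
(h^1_{(-n_1)}h^2_{(-n_2)}\vac)\ast u\equiv v\mod(\mathscr{T}^-)^{(r-2)},
\]
so in $A(\mathscr{T}^-)$ we have $[v]=[h^1_{(-n_1)}h^2_{(-n_2)}\vac]\ast[u]+t$ with $t\in[(\mathscr{T}^-)^{(r-2)}]$. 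The factor $h^1_{(-n_1)}h^2_{(-n_2)}\vac$ has even length, hence lies in $\mathscr{T}^+$, so its class acts on the left as an element of $A(\mathscr{T}^+)$; by the outer induction hypothesis both $[u]$ and $t$ lie in $B$, and since $B$ is a sub-bimodule, so does $[v]$. As $(\mathscr{T}^-)^{(r)}$ is spanned by monomials of length at most $r$, this proves $[(\mathscr{T}^-)^{(r)}]\subseteq B$ and closes the induction.

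The main obstacle is the base case: unlike the higher-length monomials, the length-one vectors $h_{(-n)}\vac$ for large $n$ cannot be reduced to shorter monomials, so they must be produced from the two generators purely through the left and right action of $A(\mathscr{T}^+)$. The point that makes this work is that the coefficient $n$ entering through the $L_{-1}$-relation and Lemma~\ref{lem-1-10} is nonzero, so the recursion in $n$ can be inverted; pinning down the precise $L_{-1}$-action on $h_{(-n)}\vac$ and checking that the filtration congruences \eqref{eq-0708-1}--\eqref{eq-0708-3} descend cleanly to $A(\mathscr{T}^-)$ are the steps requiring the most care.
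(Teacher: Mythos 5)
Your proposal is correct and follows essentially the same route as the paper: induction on the odd length $r$ of the filtration $(\mathscr{T}^-)^{(r)}$, with the length-one vectors handled via the $L_{-1}$-relation and Lemma~\ref{lem-1-10} (the paper packages this as $h_{(-n)}\vac=\frac{1}{(n-1)!}L_{-1}^{n-1}h$, which amounts to the same iterated application you spell out), and the inductive step carried by the congruence \eqref{eq-0708-3}. Your explicit one-step recursion in $n$ and the remark that the coefficient $n$ is invertible make the base case slightly more transparent than the paper's, but the argument is the same.
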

\begin{proof}
We show that $U+O(\mathscr{T}^-)=\mathscr{T}^-$ by using induction on $r=2m+1$ of $(\mathscr{T}^-)^{(r)}$.

For any $h_{(-n)}\vac\in(\mathscr{T}^-)^{(1)}$, we have
\begin{equation*}
h_{(-n)}\vac=\frac{1}{(n-1)!}L_{-1}^{n-1}h.
\end{equation*}
Then, by Lemma \ref{lem-1-10}, we obtain $h_{(-n)}\vac\in U+O(\mathscr{T}^-)$.

Let $m>1$ and suppose that $(\mathscr{T}^-)^{(2m+1)}\subset U+O(\mathscr{T}^-)$.
Then it follows from \eqref{eq-0708-1}--\eqref{eq-0708-3} that
\begin{equation*}
h^1_{(-n_1)}h^2_{(-n_2)}u\equiv(h^1_{(-n_1)}h^2_{(-n_2)}\vac)\ast u\mod(\mathscr{T}^-)^{(2m+1)}
\end{equation*}
for $u\in (\mathscr{T}^-)^{(2m+1)}$.
Note that $(\mathscr{T}^-)^{(2m+3)}$ is linearly spanned by vectors of the form $\phi_{(-n_1)}\psi_{(-n_2)}u$
with $u\in(\mathscr{T}^-)^{(2m+1)}$.
Therefore, by the induction hypothesis 
and the fact that $O(\mathscr{T}^-)$ is invariant under the left and right actions of $V$ with respect to
$\ast$, we have
$(\mathscr{T}^-)^{(2m+1)}\subset U+O(\mathscr{T}^-)$.
\end{proof}

\subsection{Generators of $A(\mathscr{T}^+)$-bimodules $A(\mathscr{T}_t^\pm)$}\label{subsect-4-3} 
Let 
\begin{align*}
&U_{0}^{+}=\langle\, a^{1}*\dotsb*a^{r}*\vact*b^{1}*\dotsb*b^{s}
\,|\,a^i,\,b^j\in\mathscr{T}^+\,\rangle_{\C},\\
&U_0^-
=\langle\, a^{1}*\dotsb*a^{r}*h_{(-\frac{1}{2})}\vact*b^{1}*\dotsb*b^{s}\,|
\,a^i,\,b^j\in\mathscr{T}^+, h\in\h\,\rangle_{\C}.
\end{align*}
The main result in this section is:

\begin{proposition}\label{proposiion333}  
The $A(\mathscr{T}^+)$-bimodule $A(\mathscr{T}_t^+)$ is generated by $[\vact]$ and 
the $A(\mathscr{T}^+)$-bimodule $A(\mathscr{T}_t^-)$ is generated 
by $[e_{(-\frac{1}{2})}\vact]$ and $[f_{(-\frac{1}{2})}\vact]$. 
\end{proposition}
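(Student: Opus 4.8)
The goal is to prove $\mathscr{T}_t^{+}=U_0^{+}+O(\mathscr{T}_t^{+})$ and $\mathscr{T}_t^{-}=U_0^{-}+O(\mathscr{T}_t^{-})$, which by the definition of $U_0^{\pm}$ is exactly the assertion that $A(\mathscr{T}_t^{+})$ is generated by $[\vact]$ and $A(\mathscr{T}_t^{-})$ by $[e_{(-\frac12)}\vact],[f_{(-\frac12)}\vact]$. As in the proof of the previous proposition I will use that $U_0^{\pm}$ is stable under the left and right $*$-actions of $\mathscr{T}^{+}$ and that $O(\mathscr{T}_t^{\pm})$ is $*$-invariant, so it suffices to produce enough vectors lying in $U_0^{\pm}+O$. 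The induction, however, is run on the conformal weight rather than on the length, for the reason indicated at the end.

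The computational core is the twisted analogue of \eqref{eq-0708-3}. For $v=\phi_{(-p)}\psi_{(-q)}\vac\in\mathscr{T}^{+}$ (so $p,q\in\Zplus$) I would expand $Y^{\theta}(v,z)=W(e^{\Delta(z)}v,z)$: since $\Delta(z)$ only contracts the two fermions, $e^{\Delta(z)}v=v+g(z)\vac$ with $g(z)$ a scalar series, whence $Y^{\theta}(v,z)=W(v,z)+g(z)\,\id$. The term $g(z)\,\id$ contributes only a multiple of $u$, and the annihilation parts of $W(v,z)$ strictly shorten monomials, so modulo the span of monomials of strictly smaller length in $\mathscr{T}_t^{\pm}$ one is left with the pure creation part. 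After the binomial bookkeeping this yields, for $u$ a monomial,
\begin{equation*}
v*u\equiv\sum_{i\ge0}\binom{p+q}{i}\sum_{\substack{a+b=p+q-i\\ a,b\in\frac12+\Zpos}}\binom{a-1}{p-1}\binom{b-1}{q-1}\,\phi_{(-a)}\psi_{(-b)}u
\end{equation*}
modulo shorter monomials. In contrast with the untwisted case, $a,b$ are now half-integers while $p,q$ are integers, so $\binom{a-1}{p-1}$ no longer vanishes for $a<p$; the sum does not collapse to a single term but is a genuine linear combination of the length-raised monomials $\phi_{(-a)}\psi_{(-b)}u$ over half-integers $a,b$.

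For the base of the induction, the lowest weight space of $\mathscr{T}_t^{+}$ is $\C\vact$ and that of $\mathscr{T}_t^{-}$ is $\langle e_{(-\frac12)}\vact,f_{(-\frac12)}\vact\rangle$, both contained in $U_0^{\pm}$ by definition. For the inductive step at weight $w$ I would feed into the congruence above all monomials $u$ of weight $<w$ (which lie in $U_0^{\pm}+O$ by the inductive hypothesis), together with the right-action version and, via Lemma \ref{lem-1-10} in the form $L_{-1}y\equiv\omega*y-y*\omega-|y|y\pmod{O}$, the images $L_{-1}y$ for $y$ of weight $w-1$; all of these lie in $U_0^{\pm}+O$. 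Projecting to weight $w$ selects a single value of $i$, so the creation sum becomes a finite combination over the modes $(a,b)$ with $a+b$ fixed; assembling the coefficients $\binom{a-1}{p-1}\binom{b-1}{q-1}$ into a matrix indexed by the source exponents $(p,q)$ and the target modes $(a,b)$, and adjoining the length-non-increasing contributions, I would solve for every individual monomial of weight $w$ and thereby show that the whole weight-$w$ slice lies in $U_0^{\pm}+O$.

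The main obstacle is exactly this solvability: one must know that the coefficient matrix just described is nonsingular, and this is the determinant evaluation deferred to the appendix. A secondary, structural point is why the induction is on weight and not on length. In the untwisted proposition the length induction closed because $L_{-1}\vac=0$, so $L_{-1}$ preserves length and the bottom stratum is generated cleanly; here, however, $L_{-1}\vact=e_{(-\frac12)}f_{(-\frac12)}\vact\neq0$, so $L_{-1}$ also raises the length by two and couples the length-one and length-three strata at a fixed weight, which would make a naive length induction circular. Treating an entire weight-$w$ slice at once, and letting the determinantal spanning statement absorb all lengths simultaneously, is what resolves this coupling; the odd case $A(\mathscr{T}_t^{-})$ is handled by the same argument with $\vact$ replaced by $e_{(-\frac12)}\vact,f_{(-\frac12)}\vact$ and even lengths by odd ones.
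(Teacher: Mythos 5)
Your skeleton matches the paper's: the primary induction is on conformal weight, the key new phenomenon is correctly identified (over half-integral modes the binomial sum no longer collapses to a single creation term, so one must invert a matrix of binomial coefficients), and the invertibility is to come from the appendix determinant. But the two places where you write ``I would solve'' are exactly where the paper has to work, and the linear system you describe does not close either of them. First, a counting problem already in the generic step: for a fixed weight transfer $k+1$ and fixed $h^1,h^2$ there are $k+1$ targets $h^1_{(-a)}h^2_{(-b)}u$ with $a,b\in\frac12+\Zpos$, $a+b=k+1$, but only $k$ sources $h^1_{(-p)}h^2_{(-q)}\vac$ with $p,q\ge1$, $p+q=k+1$; the matrix of products $\binom{a-1}{p-1}\binom{b-1}{q-1}$ you describe is therefore $k\times(k+1)$ and cannot be inverted, and the right action contributes the same leading term $v_{(-1)}u$ as the left action, hence no new equation. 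The paper supplies the missing row by the \emph{zero} mode of the weight-$(k+2)$ element $h^1_{(-k-1)}h^2$, reached through $a*u-u*a$ as in \eqref{sssd2}; that is precisely the extra row $\binom{q+c}{k+1}$ which makes the appendix matrix $A^{k}(a,b,c)$ square, and it is absent from your system.

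Second, and more seriously, the bottom stratum of the odd part. Your length-raising congruence only ever has targets of length at least $3$, so the length-one slice $\C e_{(-\frac12-d)}\vact+\C f_{(-\frac12-d)}\vact$ of $\mathscr{T}_t^-(d)$ occurs only among your ``length-non-increasing contributions,'' and for $d\ge2$ it is coupled to the length-three slice of the \emph{same} weight (for instance $L_{-1}e_{(-d+\frac12)}\vact$ has both a length-three and a length-one component). Asserting that the combined system is handled by ``the determinant evaluation deferred to the appendix'' conflates two genuinely different applications of Lemma \ref{theorem01}: the generic step uses the invertibility of $A^{k}\bigl(-\frac12,\frac12,-\frac12\bigr)$, whereas recovering $e_{(-\frac12-d)}\vact$ requires the separate square system obtained by applying $(e_{(-1-p)}f_{(-d+p+1)}\vac)_{(-1)}$ for $0\le p\le d-2$ and $(e_{(-d)}f)_{(0)}$ to the weight-zero vector $e_{(-\frac12)}\vact$, whose coefficient matrix is the different specialization $A^{d-2}\bigl(\frac12,-\frac12,\frac12\bigr)$ of determinant $1$ and which mixes one length-one target with $d-1$ length-three targets. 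That verification occupies the entire second half of the paper's proof; without it, or an equivalent argument, what you have shown is only that $\mathscr{T}_t^-(d)$ lies in $U_0^-+O(\mathscr{T}_t^-)$ modulo its length-one stratum. (Your closing remark about weight versus length induction is a reasonable heuristic, but note the paper does run a length induction --- nested inside the weight induction --- and the real obstruction to a pure length induction is that the error terms in Lemma \ref{aaiw} include full-length terms of lower weight as well as lower-length terms of full weight.)
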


Since $\mathscr{T}_t=\mathscr{T}_t^+\oplus\mathscr{T}_t^-$, 
it follows that $A(\mathscr{T}_{t})=A(\mathscr{T}_t^+)\oplus A(\mathscr{T}_t^-)$ as $A(\mathscr{T}^+)$-bimodules. 
Hence it is enough to find generators of $A(\mathscr{T}_t)$ as 
an $A(\mathscr{T}^+)$-bimodule. Before we give a proof 
of Proposition \ref{proposiion333} we will prepare several lemmas.

For any non-negative integers $r$ and $d$, we set 
\begin{align*}
&\mathscr{T}_{t}^{(r)}
=\langle\, h^1_{(-m_1)}\cdots h^s_{(-m_s)}\vact
\,|\,h^i\in\h,\,m_i\in \frac{1}{2}+\Zpos,\,0\leq s\leq r\,\rangle_{\C},\\
&\mathscr{T}_{t}^{+}(d)=\mathscr{T}_{t,_d-\frac{1}{8}},
\quad \mathscr{T}_{t}^{-}(d)=\mathscr{T}_{t,d+\frac{3}{8}},
\quad \mathscr{T}_{t}(d)=\mathscr{T}_{t}^+(d)\oplus\mathscr{T}_{t}^-(d)\\
&\mathscr{T}_{t}^{(r,d)}
=\bigoplus_{k=0}^d\mathscr{T}_{t}^{(r)}\cap\mathscr{T}_{t}(k)
\end{align*}
Remark that 
\begin{equation*}
\mathscr{T}_{t}^{(r)}=\sum_{d=0}^\infty \mathscr{T}_{t}^{(r,d)}
\end{equation*}
since $\mathscr{T}_{t}^{(r)}$ is $L_0$-invariant.

\begin{lemma}
{\rm (1)} For $0\leq p\leq k-1\,(k\ge1)$
and any $u\in\mathscr{T}_{t}^{(r,d)}$, we have 
\begin{align}
&(h^1_{(-p-1)}h^2_{(-k+p)}\vac)_{(-1)}u\equiv (h^1_{(-p-1)}h^2_{(-k+p)}\vac)*u
\quad \mod\mathscr{T}_{t}^{(r+2,d+k)},\label{sssd1}\\
&(h^1_{(-k-1)}h^2)_{(0)}u\equiv (h^1_{(-k-1)}h^2)*u-u*(h^1_{(-k-1)}h^2)
\quad\mod\mathscr{T}_{t}^{(r+2,d+k)}.\label{sssd2}
\end{align}
{\rm (2)} We have
\begin{align}\label{uuehje1}
&(h^1_{(-m)}h^2_{(-n)}\vac)_{(-1)}u=\sum_{\stackrel{i+j=-m-n}{i,j\in\frac{1}{2}+\Z}}
\binom{-i-1}{m-1}\binom{-j-1}{n-1}\NO h^1_{(i)}h^2_{(j)}\NO u,\\
&\label{uuehje2}
(h^1_{(-m)}h^2_{(-n)}\vac)_{(0)}u=\sum_{\stackrel{i+j=-m-n+1}{i,j\in\frac{1}{2}+\Z}}
\binom{-i-1}{m-1}\binom{-j-1}{n-1}\NO h^1_{(i)}h^2_{(j)}\NO u.
\end{align}
\end{lemma}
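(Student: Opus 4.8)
The plan is to establish part (2) first, since its explicit mode formulas are the computational engine for part (1). Write $v=h^1_{(-m)}h^2_{(-n)}\vac$ with $m,n\ge1$; since $v$ has even length it lies in $\mathscr{T}^+$, so its twisted modes $v_{(\alpha)}$ on $\mathscr{T}_t$ are integer-indexed. I would start from the definition $Y^\theta(v,z)=W(e^{\Delta(z)}v,z)$ and observe that $e^{\Delta(z)}$ produces only a scalar correction on $v$. Indeed $\Delta(z)=\sum_{p,q\ge0}c_{pq}\,e_{(p)}f_{(q)}z^{-p-q}$ is a sum of products of two annihilation modes; applied to the two-excitation vector $v$ it must contract both excitations and return a multiple of $\vac$, while $\Delta(z)^2v=0$ because $\Delta(z)\vac=0$. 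The contraction forces $p+q=m+n$, so $e^{\Delta(z)}v=v+c\,z^{-m-n}\vac$ for a scalar $c$, whence $Y^\theta(v,z)=W(v,z)+c\,z^{-m-n}\,\id$.

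Expanding $W(v,z)=\NO\partial^{(m-1)}h^1(z)\,\partial^{(n-1)}h^2(z)\NO$ using $\partial^{(m-1)}z^{-i-1}=\binom{-i-1}{m-1}z^{-i-m}$ gives
\[
W(v,z)=\sum_{i,j\in\frac{1}{2}+\Z}\binom{-i-1}{m-1}\binom{-j-1}{n-1}\NO h^1_{(i)}h^2_{(j)}\NO\,z^{-i-j-m-n}.
\]
Reading off the coefficients of $z^{0}$ and $z^{-1}$ (the $(-1)$- and $(0)$-modes) yields \eqref{uuehje1} and \eqref{uuehje2}. The scalar correction $c\,z^{-m-n}\id$ contributes only to the mode $v_{(m+n-1)}$, and $m+n-1\ge1$, so it affects neither $v_{(-1)}$ nor $v_{(0)}$; hence the two formulas of part (2) hold as stated equalities.

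For part (1) I would unwind the definitions of the left and right $\ast$-actions and use Pascal's identity to peel off the claimed leading term. For \eqref{sssd1}, where $m=p+1$, $n=k-p$ and $\abs{v}=k+1$, one has $v\ast u-v_{(-1)}u=\sum_{i\ge1}\binom{k+1}{i}v_{(i-1)}u$. For \eqref{sssd2}, with $w=h^1_{(-k-1)}h^2$ of weight $k+2$, the identity $\binom{k+2}{i}-\binom{k+1}{i}=\binom{k+1}{i-1}$ gives $w\ast u-u\ast w-w_{(0)}u=\sum_{i\ge2}\binom{k+1}{i-1}w_{(i-1)}u$. In both cases the remaining terms are modes $v_{(\alpha)}u$ (resp. $w_{(\alpha)}u$) with $\alpha\ge0$ (resp. $\alpha\ge1$). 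By the mode structure of part (2), each such term is a combination of vectors $\NO h^1_{(i)}h^2_{(j)}\NO u$ together with a scalar multiple of $u$, so it adds at most two excitations to $u\in\mathscr{T}_t^{(r,d)}$ and lands in $\mathscr{T}_t^{(r+2)}$; and since these modes shift the $L_0$-weight by at most $k$ (namely $\abs{v}-\alpha-1=k-\alpha$ for $\alpha\ge0$, resp. $\abs{w}-\alpha-1=k+1-\alpha$ for $\alpha\ge1$), the degree rises by at most $k$. Hence both sums lie in $\mathscr{T}_t^{(r+2,d+k)}$, giving the two congruences.

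The step I expect to be the main obstacle is the bookkeeping in part (1): one must track simultaneously the excitation number (at most $+2$ per mode) and the $L_0$-weight shift to confirm membership in $\mathscr{T}_t^{(r+2,d+k)}$, and one must verify that the anomalous scalar mode $v_{(m+n-1)}$ produced by $e^{\Delta(z)}$ always falls among the discarded higher modes rather than among the retained leading terms — which is precisely why the index ranges $\alpha\ge0$ and $\alpha\ge1$ are exactly what is needed.
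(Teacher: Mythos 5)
Your proof is correct and follows essentially the same route as the paper: for (2) you expand $Y^\theta(v,z)=W(e^{\Delta(z)}v,z)$, note that $\Delta(z)$ contributes only a scalar $\gamma z^{-m-n}\id$ which misses the $(-1)$- and $(0)$-modes, and read off coefficients; for (1) you rewrite $a*u$ and $u*a$ as the $(-1)$-mode plus higher modes and check that each discarded mode $v_{(i)}u$ ($i\ge0$) lies in $\mathscr{T}_t^{(r+2,d+|v|-i-1)}\subseteq\mathscr{T}_t^{(r+2,d+k)}$, exactly as in the paper. No gaps.
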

\begin{proof}
(1) Note that $(h^1_{(-m)}h^2_{(-n)}\vac)_{(i)}u\in\mathscr{T}_t^{(r+2,d+m+n-i-1)}$ for all integers $i$. 
Then the fact that
\begin{align*}
&a*u=a_{(-1)}u+\sum_{i=0}^\infty\binom{|a|}{i+1}a_{(i)}u,\\
&u*a=a_{(-1)}u+\sum_{i=0}^\infty\binom{|a|-1}{i+1}a_{(i)}u
\end{align*}
for any homogeneous $a\in\mathscr{T}^+$ and $u\in\mathscr{T}^+_t$ yields 
\eqref{sssd1} and \eqref{sssd2}

(2)  For any positive integers $m$ and $n$, we see that
\begin{equation*}
\begin{split}
&(h^1_{(-m)}h^2_{(-n)}\vac)_{(k)}\\
&=\Res_{z=0}z^kY^\theta(h^1_{(-m)}h^2_{(-n)}\vac,z)\\
&=\Res_{z=0}z^k\left(W(h^1_{(-m)}h^2_{(-n)}\vac,z)
+W(\Delta(z)h^1_{(-m)}h^2_{(-n)}\vac,z)\right)\\
&=\Res_{z=0}z^k\left(W(\NO\partial^{(m-1)}h^1(z)\partial^{(n-1)}h^2(z)\vac,z)
+\gamma\id\,z^{-m-n}\right) 
\end{split}
\end{equation*}
for some complex number $\gamma$.
Then the both identities immediately follow.
\end{proof}

By the lemma,  for any $0\leq p\leq k-1$ 
and $u\in \mathscr{T}_{t}^{(r,d)}$, 
one has
\begin{equation*}
\begin{split}
&(h^1_{(-p-1)}h^2_{(-k+p)}\vac)_{(-1)}u\\
&=\sum_{\stackrel{i+j=-k-1}{i,j\in\frac{1}{2}+\Z}}\binom{-i-1}{p}\binom{-j-1}{k-1-p}\NO h^1_{(i)}h^2_{(j)}\NO u\\
&\equiv \sum_{q=0}^{k}\binom{q-\frac{1}{2}}{p}\binom{k-1-q+\frac{1}{2}}{k-1-p} 
h^1_{(-q-\frac{1}{2})}h^2_{(-k+q-\frac{1}{2})} u\mod\mathscr{T}_{t}^{(r,d+k+1)}\\
&=\sum_{q=0}^{k}A^{k-1}_{p,q}\bigl(-\frac{1}{2},\frac{1}{2},-\frac{1}{2}\bigr)h^1_{(-q-\frac{1}{2})}
h^2_{(-k+q-\frac{1}{2})} u
\end{split}
\end{equation*}
and
\begin{equation*}
\begin{split}
&(h^1_{(-k-1)}h^2)_{(0)}u\\
&= \sum_{\stackrel{i+j=-k-1}{i,j\in\frac{1}{2}+\Z}}
\binom{-i-1}{k}\NO h^1_{(i)}h^2_{(j)}\NO u\\
&\equiv \sum_{q=0}^{k}\binom{q-\frac{1}{2}}{k}h^1_{(-q-\frac{1}{2})}h^2_{(-k+q-\frac{1}{2})}u
\mod\mathscr{T}_{t}^{(r,d+k+1)}\\
&=\sum_{q=0}^{k}A^{k-1}_{k,q}\bigl(-\frac{1}{2},\frac{1}{2},-\frac{1}{2}\bigr)h^1_{(-q-\frac{1}{2})}
h^2_{(-k+q-\frac{1}{2})}u
\end{split}
\end{equation*}
where we set the matrix 
$A^k(a,b,c)=(A^{k}_{p,q}(a,b,c))_{0\le p,q\le k+1}$
with
\begin{equation*}
A^{k}_{p,q}(a,b,c)=
\begin{cases}
\displaystyle{\binom{q+a}{p}\binom{k-q+b}{k-p}}&\quad
(0\leq p\leq k,\, 0\leq q\leq k+1),\\
&\\
\displaystyle{\binom{q+c}{k+1}}
&\quad (p=k+1,\,0\leq q\leq k+1)
\end{cases}
\end{equation*}

Since the matrix $A^{k}\bigl(-\frac{1}{2},\frac{1}{2},-\frac{1}{2}\bigr)$ is invertible 
by Lemma \ref{theorem01}, \eqref{sssd1}--\eqref{sssd2} imply:
 
\begin{lemma}\label{aaiw}
For any $k\in\Zplus$, $0\leq q\leq k$, $r\in\Zpos$, $h^1, h^2\in\h$ and 
$u\in\mathscr{T}_t^{(r,d)}$, the element 
$h^1_{(-q-\frac{1}{2})}h^2_{(-k+q-\frac{1}{2})}u$ is in the subspace
$\mathscr{T}^+*u*\mathscr{T}^++\mathscr{T}_{t}^{(r,d+k+1)}+\mathscr{T}_{t}^{(r+2,d+k)}$.
\end{lemma}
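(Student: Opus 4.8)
The plan is to read the two families of congruences displayed just before the lemma as a single invertible linear system, solve that system for the targets, and then control each of its sources by the $*$-product congruences \eqref{sssd1}--\eqref{sssd2}. Fix $h^1, h^2\in\h$ and $u\in\mathscr{T}_t^{(r,d)}$, and abbreviate the target vectors by $v_q=h^1_{(-q-\frac{1}{2})}h^2_{(-k+q-\frac{1}{2})}u$ for $0\le q\le k$, and the source vectors by $L_p=(h^1_{(-p-1)}h^2_{(-k+p)}\vac)_{(-1)}u$ for $0\le p\le k-1$ together with $L_k=(h^1_{(-k-1)}h^2)_{(0)}u$. The computations preceding the lemma say precisely that
\begin{equation*}
L_p\equiv\sum_{q=0}^{k}A^{k-1}_{p,q}\Bigl(-\tfrac{1}{2},\tfrac{1}{2},-\tfrac{1}{2}\Bigr)v_q\pmod{\mathscr{T}_t^{(r,d+k+1)}}
\end{equation*}
for every $0\le p\le k$, so that in the quotient by $\mathscr{T}_t^{(r,d+k+1)}$ the tuple $(L_p)_p$ is the image of $(v_q)_q$ under the $(k+1)\times(k+1)$ matrix $A^{k-1}(-\frac{1}{2},\frac{1}{2},-\frac{1}{2})$.

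First I would invoke Lemma \ref{theorem01} to conclude that $A^{k-1}(-\frac{1}{2},\frac{1}{2},-\frac{1}{2})$ is invertible. Writing $B$ for its inverse, the system can be solved for the targets: $v_q\equiv\sum_{p=0}^{k}B_{q,p}L_p\pmod{\mathscr{T}_t^{(r,d+k+1)}}$. Thus, up to an error lying in $\mathscr{T}_t^{(r,d+k+1)}$, each $v_q$ is a fixed $\C$-linear combination of the sources $L_p$.

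Next I would bound each source. By \eqref{sssd1}, for $0\le p\le k-1$ one has $L_p\equiv(h^1_{(-p-1)}h^2_{(-k+p)}\vac)*u$ modulo $\mathscr{T}_t^{(r+2,d+k)}$, and by \eqref{sssd2} one has $L_k\equiv(h^1_{(-k-1)}h^2)*u-u*(h^1_{(-k-1)}h^2)$ modulo $\mathscr{T}_t^{(r+2,d+k)}$. Because $h^1_{(-p-1)}h^2_{(-k+p)}\vac$ and $h^1_{(-k-1)}h^2$ are length-two vectors, hence lie in $\mathscr{T}^+$, both right-hand sides lie in $\mathscr{T}^+*u*\mathscr{T}^+$ (taking $\vac$ as the missing left or right factor). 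Hence $L_p\in\mathscr{T}^+*u*\mathscr{T}^++\mathscr{T}_t^{(r+2,d+k)}$ for all $0\le p\le k$. Substituting this into $v_q\equiv\sum_p B_{q,p}L_p\pmod{\mathscr{T}_t^{(r,d+k+1)}}$ and collecting the three kinds of terms yields $v_q\in\mathscr{T}^+*u*\mathscr{T}^++\mathscr{T}_t^{(r+2,d+k)}+\mathscr{T}_t^{(r,d+k+1)}$, which is the assertion.

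The only genuinely hard ingredient is the invertibility of $A^{k-1}(-\frac{1}{2},\frac{1}{2},-\frac{1}{2})$, and this is exactly what the appendix supplies through Lemma \ref{theorem01}; everything else is linear algebra plus the bookkeeping of filtration degrees. Accordingly I expect the main obstacle to lie in the appendix determinant computation rather than in the present argument, where the only care needed is to keep the two error filtrations $\mathscr{T}_t^{(r,d+k+1)}$ (from inverting the system) and $\mathscr{T}_t^{(r+2,d+k)}$ (from replacing modes by $*$-products) separate and to verify that the length-two generators really do sit inside $\mathscr{T}^+$.
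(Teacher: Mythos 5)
Your proposal is correct and follows exactly the paper's (much terser) argument: the two displayed congruences preceding the lemma form the linear system with matrix $A^{k-1}\bigl(-\frac{1}{2},\frac{1}{2},-\frac{1}{2}\bigr)$ modulo $\mathscr{T}_t^{(r,d+k+1)}$, its invertibility comes from Lemma \ref{theorem01} in the appendix, and the sources are replaced by $*$-products via \eqref{sssd1}--\eqref{sssd2} modulo $\mathscr{T}_t^{(r+2,d+k)}$. You have simply spelled out the bookkeeping that the paper leaves implicit.
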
 

\underline{{\it Proof of Proposition \ref{proposiion333} 
for $\mathscr{T}_t^+$}.} 
We shall show that $\mathscr{T}_t^+(d)\subset U_0^+$ for any 
non-negative integer $d\geq 0$ by using induction on $d$.

The case that $d=0$ is obvious because $\mathscr{T}_t^+(0)=\C\vact$. 
For  $d=1$, we have $\mathscr{T}_t^+(1)=\C L_{-1}\vact\subset U_0$ 
since $L_{-1}\vact=\w*\vact-\vact*\w+\frac{1}{8}\vact \in U_0^+$

Let $d>1$ and suppose that $\mathscr{T}_t^+(d_1)\subset U_0^+$ for any $d_1<d$.  Then we see that any element 
in $\mathscr{T}_t^+(d)$ is a linear combination of elements of the form 
\begin{equation*}
v=h^1_{(-p-\frac{1}{2})}h^2_{(-k+p-\frac{1}{2})}u
\end{equation*}
for $h^1,\, h^2\in\h$, $1\leq k\leq d-1$, $0\leq p \leq k-1$ and 
$u\in\mathscr{T}_{t}^{(r-2)}\cap \mathscr{T}_{t}^+(d-k-1)$ with any
$r\in 2\Zplus$.  Then by Lemma \ref{aaiw} and induction hypothesis, 
the element $v$ is in $\mathscr{T}_{t}^{(r-2)}\cap \mathscr{T}_{t}^+ +U_0^+$. 
Now induction on $m\geq 1$ for $r=2m$
and above discussion imply that $v\in U_0^+$. 
This proves $\mathscr{T}_t^+(d)\subset U_0^+$.\hfill$\Box$\vskip5mm
 
\underline{{\it Proof of Proposition \ref{proposiion333} for $\mathscr{T}_t^-$}.} 
We also use induction on $d$. In the case $d=0$,  we obviously have
$\mathscr{T}_t^-(0)=\C e_{(-\frac{1}{2})}\vact+\C f_{(-\frac{1}{2})}\vact\subset U_0^-$.
For $d=1$, we see that 
$\mathscr{T}_t^-(1)=\C e_{(-\frac{3}{2})}\vact+\C f_{(-\frac{3}{2})}\vact$.
Since 
\begin{equation*}
\frac{1}{2}h_{(-\frac{3}{2})}\vact=
L_{-1}h_{(-\frac{1}{2})}\vact=\w*h_{(-\frac{1}{2})}\vact
-h_{(-\frac{1}{2})}\vact*\w-\frac{3}{8}h_{(-\frac{1}{2})}\vact\in U_0^-
\end{equation*}
for $h=e$ and $f$ by Lemma \ref{prop-1-2}, we have $\mathscr{T}_t^-(1)\subset U_0^{-}$. 

Let $d>1$ and suppose that $\mathscr{T}_t^-(d_1)\subset U_0^-$ for $d_1<d$. We see that any element of $\mathscr{T}_t^-(d)$ 
with  length $r\geq 3$ is a linear combination of elements of the form 
\begin{equation*}
v=h^1_{(-p-\frac{1}{2})}h^2_{(-k+p-\frac{1}{2})}u
\end{equation*}
for $h^1, h^2\in \h$, $1\leq k\leq d-1$, $0\leq p \leq k-1$ and 
$u\in\mathscr{T}_{t}^{(r-2)}\cap \mathscr{T}_{t}^-(d-k-1)$ with $r\in1+2\Zplus$. 
By Lemma \ref{aaiw} and induction hypothesis, the element $v$ 
belongs to $\mathscr{T}_{t}^{(r-2,d)}\cap\mathscr{T}_{t}^-+U_0^-$. 
Now induction on $m\geq 1$ for $r=2m+1$ yields that 
$v\in \mathscr{T}_{t}^{(1)}\cap \mathscr{T}_{t}^-(d)+U_0^-$. 
This proves 
$\mathscr{T}_t^-(d)\subset\mathscr{T}_{t}^{(1)
}\cap\mathscr{T}_{t}^-(d)+U_0^-$.

Now it is enough to prove 
\begin{equation}\label{asidufh}
\mathscr{T}_{t}^{(1)}\cap \mathscr{T}_{t}^-(d)
=\C e_{(-\frac{1}{2}-d)}\vact
+\C f_{(-\frac{1}{2}-d)}\vact\subset U_0^-.
\end{equation}
It follows from \eqref{sssd1}--\eqref{sssd2} together with induction hypothesis on $d$ that  
\begin{align*}
(e_{(-1-p)}f_{(-d+p+1)}\vac)_{(-1)}e_{(-\frac{1}{2})}\vact,\quad 
(e_{(-d)}f)_{(0)}e_{(-\frac{1}{2})}\vact\in U_0^-
\end{align*}
for any $0\leq p\leq d-2$. 

On the other hand, by \eqref{uuehje1}--\eqref{uuehje2}, we have 
\begin{align*}
&(e_{(-p-1)}f_{(-d+p+1)}\vac)_{(-1)}e_{(-\frac{1}{2})}\vact\\
&=\sum_{q=1}^{d-1}A^{d-2}_{p,q}\bigl(-\frac{1}{2},\frac{1}{2},-\frac{1}{2}\bigr)
e_{(-q-\frac{1}{2})}f_{(-d+q+\frac{1}{2})}e_{(-\frac{1}{2})}\vact\\
&\qquad\qquad+\frac{1}{2}A^{d-2}_{d-1,d}\bigl(-\frac{1}{2},\frac{1}{2},-\frac{1}{2}\bigr)
e_{(-\frac{1}{2}-d)}\vact,\\
&=\sum_{q=0}^{d-2}A^{d-2}_{p,q+1}\bigl(-\frac{1}{2},\frac{1}{2},-\frac{1}{2}\bigr)
e_{(-q-\frac{3}{2})}f_{(-d+q-\frac{1}{2})}e_{(-\frac{1}{2})}\vact\\
&\qquad\qquad+\frac{1}{2}A^{d-2}_{d-1,d}\bigl(-\frac{1}{2},\frac{1}{2},-\frac{1}{2}\bigr)e_{(-\frac{1}{2}-d)}\vact,\\
\intertext{and}
&(e_{(-d)}f)_{(0)}e_{(-\frac{1}{2})}\vact\\
&=\sum_{q=1}^{d-1}A^{d-2}_{d-1,q}\bigl(-\frac{1}{2},\frac{1}{2},-\frac{1}{2}\bigr)e_{(-q-\frac{1}{2})}f_{(-d+q+\frac{1}{2})}e_{(-\frac{1}{2})}\vact\\
&\qquad\qquad+A^{d-2}_{d-1,d}\bigl(-\frac{1}{2},\frac{1}{2},-\frac{1}{2}\bigr)\frac{1}{2}e_{(-\frac{1}{2}-d)}\vact\\
&=\sum_{q=0}^{d-2}A^{d-2}_{d-1,q+1}\bigl(-\frac{1}{2},\frac{1}{2},-\frac{1}{2}\bigr)e_{(-q-\frac{3}{2})}
f_{(-d+q-\frac{1}{2})}e_{(-\frac{1}{2})}\vact\\
&\qquad\qquad+A^{d-2}_{d-1,d}\bigl(-\frac{1}{2},\frac{1}{2},-\frac{1}{2}\bigr)\frac{1}{2}e_{(-\frac{1}{2}-d)}\vact.
\end{align*}
By the definition of matrices $A$ we see that for $0\leq p\leq d-2$ 
and $0\leq q\leq d-2$,
\begin{equation*}
\begin{split}
A^{d-2}_{p,q+1}\bigl(-\frac{1}{2},\frac{1}{2},-\frac{1}{2}\bigr)&=\binom{q+\frac{1}{2}}{p}\binom{d-2-q-\frac{1}{2}}{d-2-p}
\\
&=A_{p,q}^{d-2}\bigl(\frac{1}{2},-\frac{1}{2},\frac{1}{2}\bigr)
\end{split}
\end{equation*}
and 
\begin{equation*}
\begin{split}
A^{d-2}_{d-1,q+1}(-\frac{1}{2},\frac{1}{2},-\frac{1}{2})&=\binom{q+\frac{1}{2}}{d-1}\\
&=A^{d-2}_{d-1,q}\bigl(\frac{1}{2},-\frac{1}{2},\frac{1}{2}\bigr).
\end{split}
\end{equation*}
Since  $A^{d-2}(\frac{1}{2},-\frac{1}{2},\frac{1}{2})$ is invertible 
because $\det A^{d-2}(\frac{1}{2},-\frac{1}{2},\frac{1}{2})=1$ by Theorem \ref{theorem01}, 
it follows that $e_{(-q-\frac{3}{2})}f_{(-d+q-\frac{1}{2})}e_{(-\frac{1}{2})}\vact$, 
$e_{(-\frac{1}{2}-d)}\vact\in U_0^-$ for any $0\leq q\leq d-2$.  

In the same way we can show that 
$f_{(-q-\frac{3}{2})}e_{(-d+q-\frac{1}{2})}f_{(-\frac{1}{2})}\vact$, 
$f_{(-\frac{1}{2}-d)}\vact\in U_0^-$ for any $0\leq q\leq d-2$.
Therefore we have \eqref{asidufh}. \hfill$\Box$\vskip5mm

\section{Fusion rules for simple $\mathscr{F}^+$-modules}\label{sect-5}

In this section we will first determine fusion rules for simple $\mathscr{T}^+$-modules and then simple $\mathscr{F}^+$-modules.

\subsection{The case $d=1$}\label{subsect-5-1}
In this section we shall determine fusion rules for $\mathscr{F}^+$ 
with $d=1$, i.e., for $\mathscr{T}^+$.
In order to determine fusion rules for $\mathscr{F}^+$ with general $d$
it is enough to consider $d=1$ case as we will explain in the next subsection.

By Proposition \ref{prop-1-5} we have:
\begin{proposition}\label{prop-main-1}
The fusion rule of type $\fusion{\mathscr{T}^+}{M}{N}$ is given by
\begin{equation*}
\dim_\C I_{\mathscr{T}^+}\fusion{\mathscr{T}^+}{M}{N}=
\begin{cases}
1&\quad(M,N)=(\mathscr{T}^\pm,\mathscr{T}^\pm), (\mathscr{T}^\pm_t,\mathscr{T}^\pm_t),\\
0&\quad\text{otherwise}.
\end{cases}
\end{equation*}
\end{proposition}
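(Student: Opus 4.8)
The plan is to read this off directly from Proposition \ref{prop-1-5} combined with the classification of simple modules. Since $\mathscr{T}^+$ is the $d=1$ case of $\mathscr{F}^+$, Theorem \ref{them1}(i) tells us that $\mathscr{T}^+$ is a simple vertex operator algebra, so it qualifies as the module "$V$" appearing in the first slot of $I_V\fusion{V}{M}{N}$. Meanwhile Theorem \ref{them1}(ii) identifies the complete list of simple $\mathscr{T}^+$-modules as the four modules $\mathscr{T}^+,\,\mathscr{T}^-,\,\mathscr{T}^+_t,\,\mathscr{T}^-_t$. Hence both $M$ and $N$ in the statement range over exactly these four modules, and Proposition \ref{prop-1-5} applies verbatim with $V=\mathscr{T}^+$.

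First I would invoke Proposition \ref{prop-1-5}, which gives $\dim_\C I_{\mathscr{T}^+}\fusion{\mathscr{T}^+}{M}{N}=1$ when $M\cong N$ and $0$ otherwise, for any simple $\mathscr{T}^+$-modules $M,N$. To translate this into the case distinction in the statement, it remains only to decide which of the sixteen ordered pairs $(M,N)$ from the list $\{\mathscr{T}^+,\mathscr{T}^-,\mathscr{T}^+_t,\mathscr{T}^-_t\}$ satisfy $M\cong N$. This reduces to checking that these four simple modules are pairwise inequivalent.

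The inequivalence follows immediately by comparing conformal weights, which are invariants of the isomorphism class of a simple module. From the grading data recorded in Sections 3.1--3.2 (specialized to $d=1$), the conformal weights are $0$ for $\mathscr{T}^+$, $1$ for $\mathscr{T}^-$, $-\tfrac{1}{8}$ for $\mathscr{T}^+_t$, and $\tfrac{3}{8}$ for $\mathscr{T}^-_t$; since these four complex numbers are distinct, no two of the modules can be isomorphic. Consequently $M\cong N$ holds precisely when $M=N$, i.e.\ exactly for the diagonal pairs $(\mathscr{T}^\pm,\mathscr{T}^\pm)$ and $(\mathscr{T}^\pm_t,\mathscr{T}^\pm_t)$, which yields fusion rule $1$, while all off-diagonal pairs yield $0$.

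There is no genuine obstacle here: the statement is a direct corollary of Proposition \ref{prop-1-5}, and the only verification needed is the pairwise distinctness of conformal weights, which is routine. The one point worth stating explicitly is that the list in Theorem \ref{them1}(ii) is \emph{complete}, so that the case distinction in the proposition is exhaustive; the substantive work of the paper lies instead in computing the fusion rules $I_{\mathscr{T}^+}\fusion{L}{M}{N}$ for which neither $L$ nor the first argument equals the vacuum module, and this proposition simply records the trivial base case.
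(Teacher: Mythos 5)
Your proposal is correct and matches the paper's proof, which consists solely of invoking Proposition \ref{prop-1-5}; your additional verification that the four simple modules are pairwise inequivalent (via their distinct conformal weights $0$, $1$, $-\tfrac{1}{8}$, $\tfrac{3}{8}$) is a reasonable point the paper leaves implicit, relying on the classification in \cite{Abe07}.
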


Since any simple $\mathscr{T}^+$-modules is self-dual, 
Propositions \ref{prop-1-2} and \ref{prop-main-1}
yield that it is enough to calculate fusion rules 
$\dim_\C I_{\mathscr{T}^+}\fusion{L}{M}{N}$ for triples
\begin{equation}\label{fusion-triple}
\begin{split}
(L,M,N)
=&(\mathscr{T}^-,\mathscr{T}^-,\mathscr{T}^-),\,
(\mathscr{T}^-,\mathscr{T}^-,\mathscr{T}^+_t),\,
(\mathscr{T}^-,\mathscr{T}^-,\mathscr{T}^-_t),\,
(\mathscr{T}^-,\mathscr{T}^+_t,\mathscr{T}^+_t),\\
&(\mathscr{T}^-,\mathscr{T}^+_t,\mathscr{T}^-_t),\,
(\mathscr{T}^-,\mathscr{T}^-_t,\mathscr{T}^-_t),\,
(\mathscr{T}^+_t,\mathscr{T}^+_t,\mathscr{T}^+_t),\,
(\mathscr{T}^+_t,\mathscr{T}^+_t,\mathscr{T}^-_t),\\
&(\mathscr{T}^+_t,\mathscr{T}^-_t,\mathscr{T}^-_t),\,
(\mathscr{T}^-_t,\mathscr{T}^-_t,\mathscr{T}^-_t).
\end{split}
\end{equation}

Let us first determine fusion rules of type $\fusion{\mathscr{T}^-}{M}{N}$.
For any $h\in\frakh$, we have
\begin{equation}\label{eq-arike-1}
\omega\ast h=L_{-2}h+2L_{-1}h+L_0h
=\frac{1}{2}L_{-1}^2h+2L_{-1}h+h.
\end{equation}
Applying Lemma \ref{lem-1-10} to \eqref{eq-arike-1} we obtain
\begin{equation*}
\begin{split}
\omega \ast h&\equiv
\frac{1}{2}\omega\ast  L_{-1}h-\frac{1}{2}(L_{-1}h) \ast\omega\\
&-L_{-1}h+2\omega\ast  h-2h\ast \omega-2h\mod O(\mathscr{T}^-)\\
&\equiv\frac{1}{2}\omega\ast(\omega\ast h-h\ast \omega-h)\\
&\quad-\frac{1}{2}(\omega\ast h-h\ast \omega-h)\ast 
\omega\\
&\quad\quad-(\omega \ast h-h\ast \omega-h)
+2\omega\ast h-2h\ast \omega-2h+h\mod O(\mathscr{T}^-)\\
&=
\frac{1}{2}(\omega^2\ast h-2\omega\ast h \ast\omega+h \ast\omega^2)
+\frac{1}{2}(\omega\ast  h-h \ast\omega),
\end{split}
\end{equation*}
which implies 
\begin{equation}\label{eq-equation-eigenvalue}
\frac{1}{2}(\omega^2\ast  h-2\omega\ast h\ast\omega+h
\ast \omega^2)
-\frac{1}{2}(\omega\ast  h+h\ast \omega)\equiv0 \mod O(\mathscr{T}^-).
\end{equation}

For simple $\mathscr{F}^+$-modules $M$ and $N$,
we see that
\begin{equation*}
u^\prime\otimes [\omega]^m\ast [a]\ast[\omega]^n\otimes v=x^my^n u^\prime\otimes [a]\otimes v,\quad
(a\in\mathscr{T}^-,\,v\in\Omega(M),\,u^\prime\in\Omega(N)^\ast)
\end{equation*}
for nonnegative integers $m$ and $n$
where $x$ and $y$ are conformal weights of $N$ and $M$, respectively.
Then, by \eqref{eq-equation-eigenvalue}, 
setting 
$\alpha(x,y)=\frac{(x-y)^2}{2}-\frac{(x+y)}{2}$, we have
\begin{equation*}
\alpha(x,y)u^\prime\otimes [h]\otimes v=0
\end{equation*}
for any $h\in\frakh$, $u^\prime\in\Omega(N)^\ast$, and $v\in \Omega(M)$.
Recall that  $A(\mathscr{T}^-)$ is generated by 
$e$ and $f$ as an $A(\mathscr{T}^+)$-module. Therefore we have 
$\Omega(N)^\ast\cdot A(\mathscr{T}^-)\cdot \Omega(M)=0$ 
if $\alpha(x,y)\neq0$.
By direct calculations we get 
\begin{equation*}
\alpha(x,y)=
\begin{cases}
-1,&M=N=\mathscr{T}^-,\\
\frac{25}{128},&(M,N)=(\mathscr{T}^-,\mathscr{T}_t^+),\\
-\frac{63}{128},&(M,N)=(\mathscr{T}^-,\mathscr{T}_t^-),\\
\frac{1}{8},&M=N=\mathscr{T}^+_t,\\
0,&(M,N)=(\mathscr{T}^+_t,\mathscr{T}_t^-),\\
-\frac{3}{8},&M=N=\mathscr{T}^-_t.
\end{cases}
\end{equation*}
These imply that
\begin{equation*}
\dim_\C I_{\mathscr{T}^+}\fusion{\mathscr{T}^-}{M}{N}=0
\quad\text{for}\quad (M,N)\neq(\mathscr{T}^+_t,\mathscr{T}^-_t).
\end{equation*}

We have a non-trivial intertwining operator of type 
$\fusion{\mathscr{T}^-}{\mathscr{T}^+_t}{\mathscr{T}^-_t}$ which
is obtained as the restriction of the twisted vertex operator 
$Y^\theta(-,z)$
to $\mathscr{T}^-$. 
Hence the fusion rule of type $\fusion{\mathscr{T}^-}{\mathscr{T}^+_t}{\mathscr{T}^-_t}$ is non-zero.

We will prove that this fusion rule is  one-dimensional.
We need to prove that 
$\dim_\C\Omega(\mathscr{T}_t^-)^*\cdot A(\mathscr{T}^-)
\cdot\Omega(\mathscr{T}^+_t)=1$ 
by using the explicit description of right and left actions of $E=e_{(-2)}e$ and $F=f_{(-2)}f$
on $\Omega(\mathscr{T}_t^-)^*$ and $A(\mathscr{T}^-)$.

Let us consider the exact forms of the actions of $E$, $F$ on 
$\Omega(\mathscr{T}^+_t)$ and $\Omega(\mathscr{T}^-_t)^\ast$,
respectively.
Let $\{\phi_t,\psi_t\}\subset\Omega(\mathscr{T}^-_t)^*$ 
be the dual basis
 with respect to the basis
$\bigl\{e_{(-\frac{1}{2})}\vact,f_{(-\frac{1}{2})}\vact\bigr\}$
of $\Omega(\mathscr{T}^-_t)$.
Note that
\begin{equation}\label{eq-1-11}
\begin{split}
(h^1_{(-2)}h^1)_{(2)}h_{(-\frac{1}{2})}^2\vact&=\sum_{i+j=0}(-i-1)\NO h^1_{(i)}h^1_{(j)}\NO h^2_{(-\frac{1}{2})}\vact\\
&=-\langle h^1,h^2\rangle h^2_{(-\frac{1}{2})}\vact
\end{split}
\end{equation}
for any $h^1,\,h^2\in\frakh$. Thus we have
\begin{align*}
&o(E)e_{(-\frac{1}{2})}\vact=0,
\quad
o(E)f_{(-\frac{1}{2})}\vact=e_{(-\frac{1}{2})}\vact,
\\
&o(F)e_{(-\frac{1}{2})}\vact=-f_{(-\frac{1}{2})}\vact,
\quad o(F)f_{(-\frac{1}{2})}\vact=0,
\end{align*}
and hence we obtain
\begin{align}
&\phi_t\cdot o(E)=\psi_t,\quad \phi_t\cdot o(F)=0,\label{eq-1-18}\\
&\psi_t\cdot o(E)=0,\quad\;\; \psi_t\cdot o(F)=-\phi_t.\label{eq-1-19}
\end{align}
We further have $o(h^1_{(-2)}h^1)\vact=0$ by the direct calculation 
(cf. \eqref{eq-1-11}).
Since we have $E\ast e=F\ast f=0$,
\eqref{eq-1-18} and \eqref{eq-1-19} show
\begin{align*}
&\psi_t\otimes [e]\otimes\vact=\phi_t\otimes [E\ast e]\otimes\vact
=0,
\\
&\phi_t\otimes [f]\otimes\vact=-\psi_t\otimes [F\ast f]\otimes\vact
=0,
\end{align*}

Let us find several relations in $A(\mathscr{T}^-)$.
Direct calculations show
\begin{equation}\label{eq-cont-mi-1}
\begin{split}
&(h^1_{(-2)}h^1)*h^2-h^2*(h^1_{(-2)}h^1)\\
&=
4\langle h^1,h^2\rangle h^1_{(-3)}\vac
+8\langle h^1,h^2\rangle h^1_{(-2)}\vac
+2\langle h^1,h^2\rangle h^1\\
&=2\langle h^1,h^2\rangle L_{-1}^2h^1+8\langle h^1,h^2\rangle L_{-1}h^1+2\langle h^1,h^2\rangle h^1\\
\end{split}
\end{equation}
Since $\mathscr{T}_t^+$ and $\mathscr{T}^-_t$ have conformal weights
$-\frac{1}{8}$ and $\frac{3}{8}$, respectively,
Lemma \ref{lem-1-10} and \eqref{eq-cont-mi-1} imply
\begin{equation}\label{eq-cont-mi-2}
u\otimes\bigl[(h^1_{(-2)}h^1)\ast h^2-h^2\ast(h^1_{(-2)}h^1)
+\frac{1}{2}\langle h^1,h^2\rangle h^1\bigr]
\otimes v=0
\end{equation}
for any $u^\prime\in\Omega(\mathscr{T}^-_t)^*$ and 
$v\in\Omega(\mathscr{T}^+_t)$.
By \eqref{eq-cont-mi-2} and \eqref{eq-1-18} we obtain
\begin{equation*}
\begin{split}
\psi_t\otimes [f]\otimes \vact&=\phi_t\otimes [E\ast f]\otimes\vact\\
&=\phi_t\otimes[E\ast f-f\ast E]\otimes\vact\\
&=\frac{1}{2}\phi_t\otimes [e]\otimes\vact.
\end{split}
\end{equation*}
Therefore the vector space 
$\Omega(\mathscr{T}^-_t)^*\cdot 
A(\mathscr{T}^-)\cdot\Omega(\mathscr{T}^+_t)$
is spanned by the element $\phi_t\otimes [e]\otimes\vact$, 
which yields that the fusion rule of type 
$\fusion{\mathscr{T}^-}{\mathscr{T}_t^+}{\mathscr{T}_t^-}$ 
is equal to one.

Summarizing we have:
\begin{proposition}\label{prop-2-2}
For any triple $(\mathscr{T}^-,M,N)$ in \eqref{fusion-triple}
with $L=\mathscr{T}^-$,
we have
\begin{equation*}
\dim_\C I_{\mathscr{T}^+}\fusion{\mathscr{T}^-}{M}{N}=
\begin{cases}
1&\quad(M,N)=(\mathscr{T}^+_t,\mathscr{T}^-_t),\\
0&\quad\text{otherwise}.
\end{cases}
\end{equation*}
\end{proposition}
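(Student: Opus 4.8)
The plan is to bound each fusion rule from above by the dimension of the Frenkel--Zhu contraction space via Theorem~\ref{thm-1-1}, namely
\[
\dim_\C I_{\mathscr{T}^+}\fusion{\mathscr{T}^-}{M}{N}\le \dim_\C (\Omega(N))^*\cdot A(\mathscr{T}^-)\cdot\Omega(M),
\]
and then to produce explicit intertwining operators to meet this bound where it is positive. The first economy comes from the fact, established above, that $A(\mathscr{T}^-)$ is generated as an $A(\mathscr{T}^+)$-bimodule by $[e]$ and $[f]$: every element of the contraction can therefore be rewritten, by absorbing the left and right $A(\mathscr{T}^+)$-factors into $(\Omega(N))^*$ and $\Omega(M)$, as a combination of terms $u'\otimes[h]\otimes v$ with $h\in\frakh$. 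Thus it suffices to control how $[e]$ and $[f]$ behave inside the contraction.

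The key structural input is a single quadratic relation in $A(\mathscr{T}^-)$ arising from the Virasoro action on $\frakh\subset\mathscr{T}^-$. Writing $\omega\ast h$ in two ways and applying Lemma~\ref{lem-1-10} repeatedly yields the congruence \eqref{eq-equation-eigenvalue} modulo $O(\mathscr{T}^-)$. Inside the contraction the central class $[\omega]$ acts on the left by the conformal weight $x$ of $N$ and on the right by the conformal weight $y$ of $M$, so this relation collapses to $\alpha(x,y)\,u'\otimes[h]\otimes v=0$ with $\alpha(x,y)=\tfrac{(x-y)^2}{2}-\tfrac{x+y}{2}$. Hence whenever $\alpha(x,y)\neq0$ the entire contraction vanishes and the fusion rule is $0$. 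Inserting the conformal weights $1,\,-\tfrac18,\,\tfrac38$ of $\mathscr{T}^-,\,\mathscr{T}_t^+,\,\mathscr{T}_t^-$ into the six relevant pairs $(M,N)$, I find $\alpha(x,y)=0$ only for $(M,N)=(\mathscr{T}_t^+,\mathscr{T}_t^-)$, which disposes of every case but that one.

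For the surviving pair $(M,N)=(\mathscr{T}_t^+,\mathscr{T}_t^-)$ I would argue both bounds. The lower bound $\ge 1$ is immediate: restricting the twisted vertex operator $Y^\theta(-,z)$ to $\mathscr{T}^-$ gives a nonzero intertwining operator of type $\fusion{\mathscr{T}^-}{\mathscr{T}_t^+}{\mathscr{T}_t^-}$. For the upper bound I compute the contraction explicitly. Since $\Omega(\mathscr{T}_t^+)=\C\vact$ is one-dimensional and $\Omega(\mathscr{T}_t^-)^*$ is two-dimensional with dual basis $\{\phi_t,\psi_t\}$, the space is a priori spanned by the four vectors $\phi_t\otimes[e]\otimes\vact$, $\phi_t\otimes[f]\otimes\vact$, $\psi_t\otimes[e]\otimes\vact$, $\psi_t\otimes[f]\otimes\vact$. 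Using the explicit zero-mode actions of $E=e_{(-2)}e$ and $F=f_{(-2)}f$ (governed by \eqref{eq-1-18}--\eqref{eq-1-19}) together with $E\ast e=F\ast f=0$, two of these vectors vanish outright; the bimodule relation \eqref{eq-cont-mi-2} then identifies $\psi_t\otimes[f]\otimes\vact$ with a multiple of $\phi_t\otimes[e]\otimes\vact$. The contraction is therefore one-dimensional, matching the lower bound.

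The main obstacle is precisely this final contraction computation for $(\mathscr{T}_t^+,\mathscr{T}_t^-)$: the weight argument only shows $\alpha=0$ there, which is consistent with any dimension, so one must genuinely pin down the $A(\mathscr{T}^+)$-bimodule relations among $[e],[f],[E],[F]$ acting between the two twisted modules and verify the collapse from four spanning vectors down to one. Everything else reduces to the contraction bound, the generation result for $A(\mathscr{T}^-)$, and a mechanical evaluation of $\alpha(x,y)$.
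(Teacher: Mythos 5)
Your proposal is correct and follows essentially the same route as the paper: the Frenkel--Zhu contraction bound combined with the quadratic Virasoro relation \eqref{eq-equation-eigenvalue} to kill all cases with $\alpha(x,y)\neq0$, the restriction of $Y^\theta(-,z)$ to $\mathscr{T}^-$ for the lower bound in the surviving case, and the explicit $E$, $F$ computations plus \eqref{eq-cont-mi-2} to collapse the contraction for $(\mathscr{T}_t^+,\mathscr{T}_t^-)$ to the single generator $\phi_t\otimes[e]\otimes\vact$. Your evaluation of $\alpha(x,y)$ at the conformal weights $1$, $-\tfrac18$, $\tfrac38$ agrees with the paper's table, so nothing is missing.
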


To complete the list \eqref{fusion-triple} we will prove the following proposition.

\begin{proposition}\label{prop-2-3}
\noindent
{\rm (1)}
For  $(M,N)=(\mathscr{T}^+_t,\mathscr{T}^+_t),(\mathscr{T}^+_t,\mathscr{T}^-_t),
(\mathscr{T}^-_t,\mathscr{T}^-_t)$,
we have
\begin{equation*}
\dim_\C I_{\mathscr{T}^+}\fusion{\mathscr{T}^+_t}{M}{N}=
0
\end{equation*}
\noindent
{\rm (2)}
We have
\begin{equation*}
\dim_\C I_\mathscr{T}^+\fusion{\mathscr{T}^-_t}{\mathscr{T}^-_t}{\mathscr{T}^-_t}=0.
\end{equation*}
\end{proposition}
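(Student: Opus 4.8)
The plan is to invoke Theorem~\ref{thm-1-1}, which bounds each fusion rule $\dim_\C I_{\mathscr{T}^+}\fusion{L}{M}{N}$ above by $\dim_\C(\Omega(N))^*\cdot A(L)\cdot\Omega(M)$; hence it suffices to show that this contraction vanishes for the four remaining triples. By Proposition~\ref{proposiion333} the bimodule $A(\mathscr{T}_t^+)$ is generated by $[\vact]$ and $A(\mathscr{T}_t^-)$ by $[e_{(-\frac12)}\vact]$ and $[f_{(-\frac12)}\vact]$, while $\Omega(\mathscr{T}_t^+)=\C\vact$ and $\Omega(\mathscr{T}_t^-)=\C e_{(-\frac12)}\vact+\C f_{(-\frac12)}\vact$ are the lowest weight spaces. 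Consequently the contraction is spanned by the finitely many vectors $u'\otimes[g]\otimes v$ with $g$ a generator of $A(L)$ and $u'\in(\Omega(N))^*$, $v\in\Omega(M)$, and the task reduces to killing each of these.

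For part~(1), with $L=\mathscr{T}_t^+$, I would exploit the fact that the generator $[\vact]$ satisfies the level-two relation $L_{-2}\vact=2L_{-1}^2\vact$ in $\mathscr{T}_t^+$; indeed a short mode computation gives $L_{-1}\vact=e_{(-\frac12)}f_{(-\frac12)}\vact$ and then $L_{-2}\vact=e_{(-\frac12)}f_{(-\frac32)}\vact+e_{(-\frac32)}f_{(-\frac12)}\vact=2L_{-1}^2\vact$, which is the Virasoro singular vector at $c=-2$, $h=-\frac18$. Mirroring the derivation of \eqref{eq-equation-eigenvalue}, I expand $\omega\ast\vact=L_{-2}\vact+2L_{-1}\vact+L_0\vact=2L_{-1}^2\vact+2L_{-1}\vact-\frac18\vact$ and use Lemma~\ref{lem-1-10} to rewrite $L_{-1}\vact$ and $L_{-1}^2\vact$ as iterated $\ast$-products of $\omega$; this yields a relation modulo $O(\mathscr{T}_t^+)$ that is quadratic in the left and right multiplications by $[\omega]$. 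In the contraction these act by the conformal weights $x$ of $N$ and $y$ of $M$, so the relation becomes $\alpha_+(x,y)\,u'\otimes[\vact]\otimes v=0$ for a polynomial $\alpha_+$. Evaluating $\alpha_+$ at the three weight pairs $(x,y)=(-\frac18,-\frac18),(\frac38,-\frac18),(\frac38,\frac38)$ arising from $(M,N)=(\mathscr{T}_t^+,\mathscr{T}_t^+),(\mathscr{T}_t^+,\mathscr{T}_t^-),(\mathscr{T}_t^-,\mathscr{T}_t^-)$ gives the nonzero values $\frac1{32}$, $\frac9{32}$, $\frac{15}{32}$ (up to a common normalization), so the contraction vanishes in each case.

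Part~(2), the triple $(\mathscr{T}_t^-,\mathscr{T}_t^-,\mathscr{T}_t^-)$, is more delicate and requires a different input. Here the generators $e_{(-\frac12)}\vact,f_{(-\frac12)}\vact$ have weight $\frac38$, which is \emph{not} a level-two degenerate weight for $c=-2$ (the first Virasoro null vector occurs only at level four), so there is no analogue of the relation $L_{-2}g=\mu L_{-1}^2g$; moreover $M$ and $N$ share the conformal weight $\frac38$, so $[\omega]$ acts centrally by the same scalar on both sides and produces no constraint. Instead I would use the remaining generators $E=e_{(-2)}e$ and $F=f_{(-2)}f$ of $A(\mathscr{T}^+)$: compute their left and right $\ast$-actions and commutators on $[e_{(-\frac12)}\vact]$ and $[f_{(-\frac12)}\vact]$ (the twisted analogues of \eqref{eq-cont-mi-1}), combine these with the nilpotent actions $o(E),o(F)$ on $\Omega(\mathscr{T}_t^-)$ and its dual (computed as in \eqref{eq-1-11}), and transfer $E$ and $F$ across $\otimes_{A(\mathscr{T}^+)}$ to force every spanning vector $u'\otimes[g]\otimes v$ to vanish.

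The hard part will be part~(2). Part~(1) is essentially mechanical once the singular-vector relation is recorded. In part~(2), however, establishing the precise $\ast$-actions and commutators of $E$ and $F$ on the weight-$\frac38$ twisted generators demands careful bookkeeping with the $\Delta(z)$-dressed twisted vertex operators and their half-integer modes, and one must then verify that the resulting small linear system has only the trivial solution. Unlike part~(1) there is no single scalar obstruction; the argument is an intertwined piece of linear algebra combining the bimodule relations with the $\mathfrak{sl}_2$-type nilpotent action on the two-dimensional space $\Omega(\mathscr{T}_t^-)$.
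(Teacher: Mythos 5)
Your part (1) is correct and follows essentially the paper's own route: the level-two null vector $L_{-2}\vact=2L_{-1}^2\vact$, pushed through Lemma \ref{lem-1-10}, turns the single generator $[\vact]$ of $A(\mathscr{T}_t^+)$ into a scalar obstruction quadratic in the conformal weights (the paper uses $\vact\ast\omega$ where you use $\omega\ast\vact$, but the resulting values $\pm\frac{1}{32},\pm\frac{9}{32},\frac{15}{32}$ agree up to sign with the paper's $\beta(x,y)$ and are all nonzero), so the contraction vanishes for all three triples.

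Part (2), however, has a genuine gap. You describe $E=e_{(-2)}e$ and $F=f_{(-2)}f$ as ``the remaining generators'' of $A(\mathscr{T}^+)$, but you have omitted $H=\frac{1}{2}(e_{(-2)}f+f_{(-2)}e)$, and $H$ is exactly the ingredient the argument turns on. Because $o(E)$ and $o(F)$ act nilpotently and off-diagonally on the two-dimensional space $\Omega(\mathscr{T}_t^-)$, each relation of the form $E\ast g-g\ast E=\cdots$ contracts to a linear equation \emph{coupling several distinct} spanning vectors $u'\otimes[g]\otimes v$ (for instance $(\phi_t\cdot o(E))\otimes[f_{(-\frac12)}\vact]\otimes v=\psi_t\otimes[f_{(-\frac12)}\vact]\otimes v$ and $\phi_t\otimes[f_{(-\frac12)}\vact]\otimes o(E)v$ land on different basis vectors), so you are left with a homogeneous linear system whose rank you neither compute nor give any reason to believe is full; it is not evident that the $E$- and $F$-relations alone force the contraction to vanish. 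The paper sidesteps this entirely: $o(H)$ acts \emph{semisimply} on $\Omega(\mathscr{T}_t^-)$ with eigenvalues $\pm\frac14$, so the single identity
\begin{equation*}
\tfrac{4}{3}\bigl(H\ast e_{(-\frac12)}\vact-(e_{(-\frac12)}\vact)\ast H\bigr)-(e_{(-\frac12)}\vact)\ast\omega
=\tfrac{2}{3}L_{-1}^2e_{(-\frac12)}\vact+\tfrac{5}{3}L_{-1}e_{(-\frac12)}\vact+\tfrac{1}{3}e_{(-\frac12)}\vact
\end{equation*}
contracts, for each of the four choices of $(u',v)$, to a \emph{scalar} multiple of the single vector $u'\otimes[e_{(-\frac12)}\vact]\otimes v$, with coefficient $\gamma_1-\frac{41}{96}$ where $\gamma_1\in\{0,\pm\frac12\}$ comes from the difference of $H$-eigenvalues; all four coefficients are visibly nonzero (and likewise for $[f_{(-\frac12)}\vact]$). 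Note also that your claim that $\omega$ ``produces no constraint'' in this case is misleading: the term $(e_{(-\frac12)}\vact)\ast\omega$ together with the $L_{-1}$-descendants is exactly what produces the nonzero constant $\frac{41}{96}$; it only becomes an obstruction in combination with $H$. So either you must bring $H$ into the argument, or you must actually exhibit and solve your $E$/$F$ linear system --- as written, part (2) is a plan rather than a proof.
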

\begin{proof}
\noindent
(1)
Recall from Theorem \ref{proposiion333} that $A(\mathscr{T}^+_t)$ is generated by $[\vact]$ as an $A(\mathscr{t}^+)$-module.
Therefore it is enough to show
$\varphi\otimes[\vact]\otimes u=0$ for all $\varphi\in \Omega(N)^\ast$ and $u\in \Omega(M)$.
Since
\begin{align*}
&L_{-1}\vact=e_{(-\frac{1}{2})}f_{(-\frac{1}{2})}\vact,
\\
&L_{-2}\vact=e_{(-\frac{1}{2})}f_{(-\frac{3}{2})}\vact+e_{(-\frac{3}{2})}f_{(-\frac{1}{2})}\vact,
\end{align*}
we obtain
\begin{equation*}
L_{-2}\vact=2L_{-1}^2\vact.
\end{equation*}
Note that
\begin{equation*}
\vact\ast\omega=L_{-2}\vact+L_{-1}\vact=2L^2_{-1}\vact+L_{-1}\vact.
\end{equation*}
Let us set 
\begin{equation*}
\beta(x,y)=y-2\Bigl(x-y-\frac{7}{8}\Bigr)\Bigl(x-y+\frac{1}{8}\Bigr)-\Bigl(x-y+\frac{1}{8}\Bigr).
\end{equation*}
Then by Lemma \ref{lem-1-10}, for simple $\mathscr{T}^+$-modules $M$ and $N$,
we have
\begin{equation}\label{eq-cont-4}
\beta(x,y)u^\prime\otimes [\vact]\otimes v=0 \quad (u^\prime\in\Omega(N)^\ast,\,v\in\Omega(M))
\end{equation}
where
$x$ and $y$ are conformal weights of $N$ and $M$, respectively.
It is not difficult to see that 
\begin{equation}\label{eq-cont-5}
\beta(x,y)=
\begin{cases}
-\frac{9}{32},&\quad (M,N)=(\mathscr{T}^+_t,\mathscr{T}^-_t)\quad(x,y)=(\frac{3}{8},-\frac{1}{8}),\\
-\frac{1}{32}&\quad M=N=\mathscr{T}_t^+\quad(x,y)=(-\frac{1}{8},-\frac{1}{8}),\\
\frac{15}{32}&\quad M=N=\mathscr{T}_t^-\quad(x,y)=(\frac{3}{8},\frac{3}{8}).
\end{cases}
\end{equation}
Hence it follows from \eqref{eq-cont-4} and \eqref{eq-cont-5} that
$\varphi\otimes[\vact]\otimes u=0$ 
for all $\varphi\in\Omega(N)^*$ and $u\in\Omega(M)$.
Therefore we have shown (1).

(2)
By using the fact that $A(\mathscr{T}_t^-)$ is generated by $[e_{(-\frac{1}{2})}\vact]$ and $[f_{(-\frac{1}{2})}\vact]$ 
as an $A(\mathscr{T}_t^+)$-module
by Theorem \ref{proposiion333}, we  prove
$\varphi\otimes [h_{(-\frac{1}{2})}\vact]\otimes u=0\,(h=e,f)$ for all
$\varphi\in\Omega(\mathscr{T}_t^-)^*$ and $u\in\Omega(\mathscr{T}_t^-)$.
To prove this we need  identities which can be verified by direct 
calculations.
Recall that $H=\frac{1}{2}(e_{(-2)}f-f_{(-2)}e)$. Then we have:
\begin{align}
&
H\ast h_{(-\frac{1}{2})}\vact-(h_{(-\frac{1}{2})}\vact)\ast H
=H_{(0)}h_{(-\frac{1}{2})}\vact+2H_{(1)}h_{(-\frac{1}{2})}\vact\notag\\
&\qquad\qquad\qquad\qquad\qquad\qquad\qquad\qquad+H_{(2)}h_{(-\frac{1}{2})}\vact,\label{eq-cont-t-0}
\\
&H_{(0)}h_{(-\frac{1}{2})}\vact=\frac{1}{2}f_{(-\frac{3}{2})}e_{(-\frac{1}{2})}h_{(-\frac{1}{2})}\vact
-\frac{1}{2}e_{(-\frac{3}{2})}h_{(-\frac{1}{2})}f_{(-\frac{1}{2})}\vact\notag\\
&\qquad\qquad\qquad\qquad+\frac{3}{4}\langle f,h\rangle
e_{(-\frac{5}{2})}\vact+\frac{3}{4}\langle e,h\rangle f_{(-\frac{5}{2})}\vact,
\label{eq-cont-t-1}\\
&H_{(1)}h_{(-\frac{1}{2})}\vact=\frac{1}{2}\langle f,h\rangle e_{(-\frac{3}{2})}\vact+\frac{1}{2}\langle e,h\rangle
f_{(-\frac{3}{2})}\vact,\label{eq-cont-t-2}\\
&H_{(2)}h_{(-\frac{1}{2})}\vact=\frac{1}{4}\langle f,h\rangle e_{(-\frac{1}{2})}\vact+\frac{1}{4}\langle e,h\rangle
f_{(-\frac{1}{2})}\vact,\label{eq-cont-t-3}\\
&(h_{(-\frac{1}{2})}\vact)\ast\omega=L_{-2}h_{(-\frac{1}{2})}\vact+L_{-1}h_{(-\frac{1}{2})}\vact.
\label{eq-cont-t-4}
\end{align}
Since  
\begin{align}
&L_{-1}h_{(-\frac{1}{2})}\vact=\frac{1}{2}h_{(-\frac{3}{2})}\vact,\label{eq-cont-t-5}\\
&L_{-2}h_{(-\frac{1}{2})}\vact =\frac{1}{2}h_{(-\frac{5}{2})}\vact
-f_{(-\frac{3}{2})}e_{(-\frac{1}{2})}h_{(-\frac{1}{2})}\vact
-e_{(-\frac{3}{2})}h_{(-\frac{1}{2})}f_{(-\frac{1}{2})}\vact\label{eq-cont-t-6}
\end{align}
for $h=e, f$,
we conclude that 
\begin{equation}\label{eq-cont-t-6-1}
L_{-1}^2h_{(-\frac{1}{2})}\vact=\frac{3}{4}h_{(-\frac{5}{2})}\vact
+\frac{1}{2}h_{(-\frac{3}{2})}e_{(-\frac{1}{2})}f_{(-\frac{1}{2})}\vact.
\end{equation}

On the one hand,
we have by \eqref{eq-cont-t-0}--\eqref{eq-cont-t-4} that
\begin{equation}\label{eq-cont-t-7}
\begin{split}
&\frac{4}{3}(H\ast e_{(-\frac{1}{2})}\vact-(e_{(-\frac{1}{2})}\vact)\ast H)
-(e_{(-\frac{1}{2})}\vact)\ast\omega\\
&=-\frac{2}{3}e_{(-\frac{3}{2})}e_{(-\frac{1}{2})}f_{(-\frac{1}{2})}\vact
+e_{(-\frac{5}{2})}\vact+\frac{4}{3}e_{(-\frac{3}{2})}\vact+\frac{1}{3}e_{(-\frac{1}{2})}\vact\\
&\qquad-L_{-2}e_{(-\frac{1}{2})}\vact-L_{-1}e_{(-\frac{1}{2})}\vact.
\end{split}
\end{equation}
On the other hand, from \eqref{eq-cont-t-5}--\eqref{eq-cont-t-6-1}  the right-hand side of \eqref{eq-cont-t-7}
becomes
\begin{equation*}
\begin{split}
&-\frac{2}{3}e_{(-\frac{3}{2})}e_{(-\frac{1}{2})}f_{(-\frac{1}{2})}\vact
+e_{(-\frac{5}{2})}\vact+\frac{4}{3}e_{(-\frac{3}{2})}\vact+\frac{1}{3}e_{(-\frac{1}{2})}\vact\\
&-\frac{1}{2}e_{(-\frac{5}{2})}\vact+e_{(-\frac{3}{2})}e_{(-\frac{1}{2})}f_{(-\frac{1}{2})}\vact
-L_{-1}e_{(-\frac{1}{2})}\vact\\
&=\frac{2}{3}L_{-1}^2e_{(-\frac{1}{2})}\vact+\frac{5}{3}L_{-1}e_{(-\frac{1}{2})}\vact
+\frac{1}{3}e_{(-\frac{1}{2})}\vact.
\end{split}
\end{equation*}
Hence we have
\begin{equation}\label{eq-cont-t-0705}
\begin{split}
&\frac{4}{3}(H\ast e_{(-\frac{1}{2})}\vact-(e_{(-\frac{1}{2})}\vact)\ast H)
-(e_{(-\frac{1}{2})}\vact)\ast\omega\\
&\qquad\qquad-\frac{2}{3}L_{-1}^2e_{(-\frac{1}{2})}\vact-\frac{5}{3}L_{-1}e_{(-\frac{1}{2})}\vact
-\frac{1}{3}e_{(-\frac{1}{2})}\vact=0.
\end{split}
\end{equation}
Changing $(e,f)$ by $(-f,e)$ in the above discussion we obtain
the identity
\begin{equation}\label{eq-cont-t-0705-2}
\begin{split}
&\frac{4}{3}(H\ast f_{(-\frac{1}{2})}\vact-(f_{(-\frac{1}{2})}\vact)\ast H)
+(f_{(-\frac{1}{2})}\vact)\ast\omega\\
&\qquad\qquad+\frac{2}{3}L_{-1}^2f_{(-\frac{1}{2})}\vact+\frac{5}{3}L_{-1}f_{(-\frac{1}{2})}\vact
+\frac{1}{3}f_{(-\frac{1}{2})}\vact=0.
\end{split}
\end{equation}

Note that
\begin{equation*}
\phi_t\cdot o(H)=\frac{1}{4}\phi_t,\quad \psi_t\cdot o(H)=-\frac{1}{4}\psi_t
\end{equation*}
by \eqref{eq-cont-t-3}.
Then, by \eqref{eq-cont-t-0705}, Lemma \ref{lem-1-10} and \eqref{eq-cont-t-3},
we see that
\begin{align}
&\left\{\gamma_1\bigl(\frac{1}{4},\frac{1}{4}\bigr)-\gamma_2\bigl(\frac{3}{8},\frac{3}{8}\bigr)\right\}\phi_t\otimes 
[e_{(-\frac{1}{2})}\vact]\otimes e_{(-\frac{1}{2})}\vact=0,\label{eq-0705-1}\\
&\left\{\gamma_1\bigl(\frac{1}{4},-\frac{1}{4}\bigr)-\gamma_2\bigl(\frac{3}{8},\frac{3}{8}\bigr)\right\}\phi_t\otimes 
[e_{(-\frac{1}{2})}\vact]\otimes f_{(-\frac{1}{2})}\vact=0,\label{eq-0705-2}\\
&\left\{\gamma_1\bigl(-\frac{1}{4},\frac{1}{4}\bigr)-\gamma_2\bigl(\frac{3}{8},\frac{3}{8}\bigr)\right\}\psi_t\otimes 
[e_{(-\frac{1}{2})}\vact]\otimes e_{(-\frac{1}{2})}\vact=0,\label{eq-0705-3}\\
&\left\{\gamma_1\bigl(-\frac{1}{4},-\frac{1}{4}\bigr)-\gamma_2\bigl(\frac{3}{8},\frac{3}{8}\bigr)\right\}\psi_t\otimes 
[e_{(-\frac{1}{2})}\vact]\otimes f_{(-\frac{1}{2})}\vact=0\label{eq-0705-4}
\end{align}
where
\begin{align*}
&\gamma_1(x,y)=x-y,\\
&\gamma_2(x,y)=y+\frac{2}{3}\Bigl(x-y-\frac{11}{8}\Bigr)
\Bigl(x-y-\frac{3}{8}\Bigr)+\frac{5}{3}\Bigl(x-y-\frac{3}{8}\Bigr)+\frac{1}{3}.
\end{align*}
It is obvious that $\gamma_2(\frac{3}{8},\frac{3}{8})=\frac{41}{96}$
and 
\begin{equation*}
\gamma_1(x,y)=
\begin{cases}
0, & x=y=\pm\frac{1}{4},\\
\frac{1}{2}, & x=\frac{1}{4}, y=-\frac{1}{4},\\
-\frac{1}{2}, & x=-\frac{1}{4}, y=\frac{1}{4}.
\end{cases}
\end{equation*}
Thus the coefficients in \eqref{eq-0705-1}--\eqref{eq-0705-4} are all non-zero,
which shows that 
$\varphi\otimes [e_{(-\frac{1}{2})}\vact]\otimes u=0$ for all $\varphi\in\Omega(\mathscr{T}^-_t)^\ast$
and $u\in\Omega(\mathscr{T}_t^-)$.

Applying the similar argument to \eqref{eq-cont-t-0705-2}, we have
$\varphi\otimes [f_{(-\frac{1}{2})}\vact]\otimes u=0$ for all $\varphi\in\Omega(\mathscr{T}^-_t)^\ast$
and $u\in\Omega(\mathscr{T}_t^-)$.
Therefore we have (2).
\end{proof}

By Propositions \ref{prop-main-1}--\ref{prop-2-3} we obtain:

\begin{theorem}\label{thm-main-1}
For any triple $(L, M, N)$ of simple $\mathscr{T}^+$-modules,
the fusion rule of type $\fusion{L}{M}{N}$ is $0$ or $1$. 
The fusion rule of type $\fusion{L}{M}{N}$ is $1$ if and only if the triple $(L,M,N)$ is any of 
\begin{equation*}
\begin{split}
&(\mathscr{T}^+,\mathscr{T}^+,\mathscr{T}^+),\quad
(\mathscr{T}^+,\mathscr{T}^-,\mathscr{T}^-),\quad
(\mathscr{T}^+,\mathscr{T}^+_t,\mathscr{T}^+_t),\\
&(\mathscr{T}^+,\mathscr{T}^-_t,\mathscr{T}^-_t),\quad
(\mathscr{T}^-,\mathscr{T}^+_t,\mathscr{T}^-_t)
\end{split}
\end{equation*}
and their permutations.
\end{theorem}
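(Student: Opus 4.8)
The plan is to obtain Theorem~\ref{thm-main-1} as a bookkeeping consequence of the propositions already established, once the full symmetry of fusion rules is exploited to collapse ordered triples into unordered ones. First I would record that each of the four simple modules $\mathscr{T}^{\pm},\mathscr{T}_t^{\pm}$ is self-dual, so that $D(M)\cong M$ and $D(N)\cong N$ throughout. Since $\mathscr{T}^+$ satisfies Zhu's finiteness condition by Theorem~\ref{them1}, Proposition~\ref{prop-1-2} applies and furnishes the isomorphisms
\[
I_{\mathscr{T}^+}\fusion{L}{M}{N}\cong I_{\mathscr{T}^+}\fusion{M}{L}{N}\cong I_{\mathscr{T}^+}\fusion{L}{N}{M},
\]
where the second uses self-duality to replace $D(N),D(M)$ by $N,M$. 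The transposition of the first two slots and the transposition of the last two slots generate the symmetric group $S_3$, so $\dim_\C I_{\mathscr{T}^+}\fusion{L}{M}{N}$ depends only on the unordered multiset $\{L,M,N\}$. This reduces the evaluation of all ordered triples to the $\binom{6}{3}=20$ size-three multisets drawn from $\{\mathscr{T}^+,\mathscr{T}^-,\mathscr{T}_t^+,\mathscr{T}_t^-\}$.

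Next I would split these twenty multisets according to whether they contain the vacuum module $\mathscr{T}^+$. For a multiset containing $\mathscr{T}^+$, Proposition~\ref{prop-main-1} (itself a specialization of Proposition~\ref{prop-1-5}) shows that the fusion rule equals $1$ exactly when the remaining two entries are isomorphic, and $0$ otherwise. This singles out precisely the four triples $(\mathscr{T}^+,\mathscr{T}^+,\mathscr{T}^+)$, $(\mathscr{T}^+,\mathscr{T}^-,\mathscr{T}^-)$, $(\mathscr{T}^+,\mathscr{T}_t^+,\mathscr{T}_t^+)$, and $(\mathscr{T}^+,\mathscr{T}_t^-,\mathscr{T}_t^-)$ as the $\mathscr{T}^+$-containing triples with fusion rule $1$. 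The complementary ten multisets, those built only from $\{\mathscr{T}^-,\mathscr{T}_t^+,\mathscr{T}_t^-\}$, are exactly the ten triples listed in \eqref{fusion-triple}.

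Finally I would evaluate these ten $\mathscr{T}^+$-free triples using the propositions already proved. The six triples in \eqref{fusion-triple} having $\mathscr{T}^-$ in the first slot are covered by Proposition~\ref{prop-2-2}, which gives fusion rule $1$ only for $(\mathscr{T}^-,\mathscr{T}_t^+,\mathscr{T}_t^-)$ and $0$ otherwise. The three remaining triples $(\mathscr{T}_t^+,\mathscr{T}_t^+,\mathscr{T}_t^+)$, $(\mathscr{T}_t^+,\mathscr{T}_t^+,\mathscr{T}_t^-)$, $(\mathscr{T}_t^+,\mathscr{T}_t^-,\mathscr{T}_t^-)$ are handled by Proposition~\ref{prop-2-3}(1), each giving $0$, while $(\mathscr{T}_t^-,\mathscr{T}_t^-,\mathscr{T}_t^-)$ is handled by Proposition~\ref{prop-2-3}(2), also $0$. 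Collecting the nonzero cases and reintroducing all orderings via the $S_3$-symmetry yields exactly the five families in the statement, and bounds every fusion rule by $1$.

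The step I expect to require the most care is not any analytic estimate, since none remains, but the combinatorial verification that the symmetry reduction together with the two-way split genuinely exhausts all ordered triples with no omission or overlap, i.e.\ that \eqref{fusion-triple} is precisely the set of multisets avoiding $\mathscr{T}^+$ and that the four $\mathscr{T}^+$-containing cases above are exactly those with coinciding companions. Once this accounting is pinned down, the theorem follows immediately from Propositions~\ref{prop-main-1}, \ref{prop-2-2}, and \ref{prop-2-3} together with the symmetries of Proposition~\ref{prop-1-2}.
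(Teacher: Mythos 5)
Your proposal is correct and follows essentially the same route as the paper: the authors likewise use self-duality together with Proposition~\ref{prop-1-2} to reduce to the triples containing $\mathscr{T}^+$ (settled by Proposition~\ref{prop-main-1}) plus the ten $\mathscr{T}^+$-free multisets listed in \eqref{fusion-triple}, which are then disposed of by Propositions~\ref{prop-2-2} and~\ref{prop-2-3}. The combinatorial accounting you flag as the delicate step checks out ($\binom{6}{3}=20$ multisets, ten avoiding $\mathscr{T}^+$, four of the rest with coinciding companions), so nothing further is needed.
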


Let $\Fusion(\mathscr{T}^+)$ be the fusion algebra 
of the vertex operator algebra $\mathscr{T}^+$,
i.e., the additive group $\Fusion(\mathscr{T}^+)$ is freely generated  
by all inequivalent simple $\mathscr{T}^+$-modules
with the multiplication defined by
\begin{equation*}
M\times N=\sum_{L:\text{simple $\mathscr{T}^+$-module}}
\dim_\C I_{\mathscr{T}^+}\fusion{M}{N}{L}L
\end{equation*}
for simple $\mathscr{T}^+$-modules $M$ and $N$.
Note that the fusion product $\times$ is well-defined and is associative 
by Theorem \ref{thm-main-1}.

Let us set 
\begin{equation*}
\mathscr{T}^{(0,0)}=\mathscr{T}^+,\quad\mathscr{T}^{(1,0)}=\mathscr{T}^-,\quad\mathscr{T}^{(0,1)}=\mathscr{T}^+_t,\quad
\mathscr{T}^{(1,1)}=\mathscr{T}^-_t.
\end{equation*}
Then we have 
\begin{equation*}
\mathscr{T}^a\times \mathscr{T}^b=\mathscr{T}^{a+b}
\end{equation*}
for any $a,\,b\in \Z/2\Z\times \Z/2\Z
$ by Theorem \ref{thm-main-1}.

\begin{theorem}\label{thm-fusion-w-2}
The fusion algebra $\Fusion(\mathscr{T}^+)$ is isomorphic to the group algebra of $\Z/2\Z\times \Z/2\Z$
over $\Z$.
\end{theorem}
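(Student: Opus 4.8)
The plan is to exhibit an explicit $\Z$-algebra isomorphism $\Phi\colon\Z[\Z/2\Z\times\Z/2\Z]\to\Fusion(\mathscr{T}^+)$ and to verify it directly, since all of the analytic content has already been packaged into Theorem \ref{thm-main-1} and the resulting multiplication law $\mathscr{T}^a\times\mathscr{T}^b=\mathscr{T}^{a+b}$ displayed just above.

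First I would recall that, by its very definition, $\Fusion(\mathscr{T}^+)$ is the free $\Z$-module on the four inequivalent simple modules, which we have relabelled as $\{\mathscr{T}^a\mid a\in\Z/2\Z\times\Z/2\Z\}$. The group algebra $\Z[\Z/2\Z\times\Z/2\Z]$ is likewise the free $\Z$-module on the four group elements, with multiplication determined $\Z$-bilinearly by the group law. I would then define $\Phi$ on these bases by $\Phi(a)=\mathscr{T}^a$ for each $a\in\Z/2\Z\times\Z/2\Z$ and extend $\Z$-linearly. Since $\Phi$ carries a $\Z$-basis bijectively onto a $\Z$-basis, it is already an isomorphism of $\Z$-modules.

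Next I would check that $\Phi$ respects multiplication. On basis elements this is precisely the identity preceding the theorem: for all $a,b\in\Z/2\Z\times\Z/2\Z$,
\begin{equation*}
\Phi(a\cdot b)=\Phi(a+b)=\mathscr{T}^{a+b}=\mathscr{T}^a\times\mathscr{T}^b=\Phi(a)\times\Phi(b),
\end{equation*}
and since both products are $\Z$-bilinear, this extends to all of the group algebra. Moreover $\Phi$ sends the group identity $(0,0)$ to $\mathscr{T}^{(0,0)}=\mathscr{T}^+$, which is the unit of $\Fusion(\mathscr{T}^+)$ by Proposition \ref{prop-main-1}. Hence $\Phi$ is a unital ring homomorphism, and being bijective it is an isomorphism of $\Z$-algebras.

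There is no serious obstacle remaining: the entire burden rests on Theorem \ref{thm-main-1}, whose determination of every fusion rule collapses to the clean law $\mathscr{T}^a\times\mathscr{T}^b=\mathscr{T}^{a+b}$. The only point I would stress is that this law genuinely reproduces the Klein four group rather than, say, $\Z/4\Z$; this is guaranteed by the fact that each nonidentity simple module is its own fusion inverse, i.e. $\mathscr{T}^a\times\mathscr{T}^a=\mathscr{T}^+$ for every $a$ (equivalently $2a=0$ in $\Z/2\Z\times\Z/2\Z$), which is visible from the entries $(\mathscr{T}^-,\mathscr{T}^-,\mathscr{T}^+)$, $(\mathscr{T}^+_t,\mathscr{T}^+_t,\mathscr{T}^+)$ and $(\mathscr{T}^-_t,\mathscr{T}^-_t,\mathscr{T}^+)$ among the permutations listed in Theorem \ref{thm-main-1}.
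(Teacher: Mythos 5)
Your proposal is correct and follows the same route as the paper: the theorem is an immediate formal consequence of the multiplication law $\mathscr{T}^a\times\mathscr{T}^b=\mathscr{T}^{a+b}$ for $a,b\in\Z/2\Z\times\Z/2\Z$ established from Theorem \ref{thm-main-1}, and the paper simply asserts the isomorphism at that point while you write out the evident basis-preserving map $\Phi$ and verify it is a unital ring isomorphism. Your closing remark that each nontrivial simple module is its own fusion inverse is a worthwhile sanity check but adds nothing beyond what the indexing by $\Z/2\Z\times\Z/2\Z$ already encodes.
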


\subsection{Fusion rules for $d>1$}\label{subsect-5-2}
In this section we will determine fusion rules among simple 
$\mathscr{F}^+$-modules for $d>1$ by using the fusion rules for $d=1$.

Note that $(\mathscr{T}^+)^{\otimes d}$ is a vertex operator algebra.
For simplicity we set  $G=\Z/2\Z\times \Z/2\Z$. Then any simple 
$(\mathscr{T}^+)^{\otimes d}$-module is obtained as 
\begin{equation*}
\mathscr{F}^{\vec{a}}=\bigotimes_{i=1}^d \mathscr{T}^{a_i}\quad\text{for some}\quad \vec{a}=(a_1,\dots,a_d)\in G^d.
\end{equation*}
Then, by Theorems \ref{thm-1-4} and \ref{thm-main-1}, we have
\begin{equation}\label{eq-fusion-product-tensor}
\mathscr{F}^{\vec{a}}\times\mathscr{F}^{\vec{b}}
=\mathscr{F}^{\vec{a}+\vec{b}}\quad(\vec{a},\vec{b}\in G^d)
\end{equation}
in the fusion algebra $\Fusion((\mathscr{T}^+)^{\otimes d})$.

Let $X(i)$ be the subset of $G^d$ consisting of elements $\vec{a}$ satisfying
\begin{equation*}
\sharp\{k\,|\,a_k^1=1\}\equiv i\mod2,
\end{equation*}
where we denote the $i$-th component of $\vec{a}\in G^{d}$ by
$a_i=(a_i^1,a_i^2)$.
Let $X(i,j)$ be the subset of $X(i)$ consisting of elements $\vec{a}$
with the property
\begin{equation*}
a_k^2=j
\end{equation*}
for all $1\le k\le d$.
Then the sets $X(i,j)\,(i,j\in\Z/2\Z)$ are mutually disjoint and 
\begin{equation}\label{eq-sets-sum}
X(i,j)+X(i^\prime,j^\prime)=X(i+i^\prime,j+j^\prime)\quad{for}\quad
i,i^\prime,j,j^\prime\in\Z/2\Z.
\end{equation}
We note that the set $\{X(i,j)\,|\,i,j\in\Z/2\Z\}$ is isomorphic to the group $G=\Z/2\Z\times \Z/2\Z$ as an abelian group.

For any subset $X\subseteq G^d$ we set
\begin{equation*}
\mathscr{F}^X=\bigoplus_{\vec{a}\in X}\mathscr{F}^{\vec{a}}=\bigoplus_{\vec{a}\in X}\bigotimes_{i=1}^d \mathscr{T}^{a_i}.
\end{equation*}
Since $(\mathscr{T}^+)^{\otimes d}$ is isomorphic to a subalgebra of 
$\mathscr{F}^+$ with the same Virasoro element,
every simple $\mathscr{F}^+$-module is a  simple $(\mathscr{T}^+)^{\otimes d}$-module.
In fact we have
\begin{equation}\label{eq-07202001}
\begin{cases}
&\mathscr{F}^+\cong\bigoplus_{\vec{a}\in X(0,0)}\mathscr{F}^a
=\mathscr{F}^{X(0,0)},\\
&\mathscr{F}^-\cong\bigoplus_{\vec{a}\in X(1,0)}\mathscr{F}^a
=\mathscr{F}^{X(1,0)},\\
&\mathscr{F}^+_t\cong\bigoplus_{\vec{a}\in X(0,1)}\mathscr{F}^a
=\mathscr{F}^{X(0,1)},\\
&\mathscr{F}^-_t\cong\bigoplus_{\vec{a}\in X(1,1)}\mathscr{F}^a
=\mathscr{F}^{X(1,1)}.
\end{cases}
\end{equation}

The same argument for $d=1$ given in the previous section shows that
fusion rules among triples
\begin{equation*}
\begin{split}
&(\mathscr{F}^+,\mathscr{F}^+,\mathscr{F}^+),\quad
(\mathscr{F}^+,\mathscr{F}^-,\mathscr{F}^-),\quad
(\mathscr{F}^+,\mathscr{F}^+_t,\mathscr{F}^+_t),\\
&(\mathscr{F}^+,\mathscr{F}^-_t,\mathscr{F}^-_t),\quad
(\mathscr{F}^-,\mathscr{F}^+_t,\mathscr{F}^-_t)
\end{split}
\end{equation*}
and their permutations are non-zero.
By \eqref{eq-sets-sum} and \eqref{eq-07202001} we have:

\begin{lemma}\label{lem-main-sub}
For any $X,\, Y,\,Z\in \{X(i,j)\;|\;i,j\in\Z/2\Z\,\}$ the fusion rule of type 
$\fusion{\mathscr{F}^X}{\mathscr{F}^Y}{\mathscr{F}^{X+Y}}$
is non-zero.
\end{lemma}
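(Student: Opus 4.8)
The plan is to reduce the assertion to the finite list of non-vanishing fusion rules recorded immediately above the lemma, and then to check, by running through the group $G$, that every triple $(\mathscr{F}^X,\mathscr{F}^Y,\mathscr{F}^{X+Y})$ appears in that list up to permutation.

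First I would settle the bookkeeping. By \eqref{eq-07202001} the four modules $\mathscr{F}^{X(i,j)}$ $(i,j\in\Z/2\Z)$ are exactly the simple $\mathscr{F}^+$-modules $\mathscr{F}^+,\mathscr{F}^-,\mathscr{F}^+_t,\mathscr{F}^-_t$, and by \eqref{eq-sets-sum} the assignment $X(i,j)\mapsto(i,j)$ identifies $\{X(i,j)\mid i,j\in\Z/2\Z\}$ with $G=\Z/2\Z\times\Z/2\Z$ as an abelian group. In particular $X+Y$ is again one of the four sets $X(i,j)$, so $\mathscr{F}^{X+Y}$ is a simple $\mathscr{F}^+$-module and the quantity in question is a fusion rule among three simple $\mathscr{F}^+$-modules. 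Since every simple $\mathscr{F}^+$-module is self-dual, just as for $d=1$, Proposition \ref{prop-1-2} shows that $\dim_\C I_{\mathscr{F}^+}\fusion{L}{M}{N}$ depends only on the unordered triple $\{L,M,N\}$; hence it suffices to identify $\{\mathscr{F}^X,\mathscr{F}^Y,\mathscr{F}^{X+Y}\}$ in each case.

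Next I would invoke the non-vanishing already established above the lemma: the five triples $(\mathscr{F}^+,\mathscr{F}^+,\mathscr{F}^+)$, $(\mathscr{F}^+,\mathscr{F}^-,\mathscr{F}^-)$, $(\mathscr{F}^+,\mathscr{F}^+_t,\mathscr{F}^+_t)$, $(\mathscr{F}^+,\mathscr{F}^-_t,\mathscr{F}^-_t)$ and $(\mathscr{F}^-,\mathscr{F}^+_t,\mathscr{F}^-_t)$ all carry non-trivial intertwining operators, hence so do all of their permutations. For the four vacuum triples this is Proposition \ref{prop-1-5}, while the anomalous triple $(\mathscr{F}^-,\mathscr{F}^+_t,\mathscr{F}^-_t)$ is witnessed exactly as in the $d=1$ case by restricting the twisted vertex operator $Y^\theta(-,z)$ to the odd part $\mathscr{F}^-\subset\mathscr{F}$: the parity shift carries $\mathscr{F}^+_t$ into $\mathscr{F}^-_t$, the twisted Jacobi identity makes this restriction an $\mathscr{F}^+$-intertwining operator, and it is visibly nonzero already on $\mathscr{F}_1\cong\h$.

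Finally I would run through the multiplication table of $G$. If $X=X(0,0)$, or $Y=X(0,0)$, or $X=Y$ (so that $X+Y=X(0,0)$, since every element of $G$ has order dividing two), then $\{\mathscr{F}^X,\mathscr{F}^Y,\mathscr{F}^{X+Y}\}$ has the vacuum shape $\{\mathscr{F}^+,P,P\}$ for some simple $P$ and is one of the first four distinguished triples. In the only remaining case $X,Y$ and $X+Y$ are the three distinct non-trivial elements of $G$, so the unordered triple equals $\{\mathscr{F}^-,\mathscr{F}^+_t,\mathscr{F}^-_t\}$, the anomalous triple. In every case the fusion rule is non-zero, as claimed. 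The conceptually substantive input, namely the non-vanishing of the anomalous triple, is inherited verbatim from $d=1$; within the lemma itself the only points demanding care are the self-duality used for full permutation symmetry and the verification that the four cases above are genuinely exhaustive.
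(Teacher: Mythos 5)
Your proposal is correct and follows essentially the same route as the paper: the paper likewise obtains non-vanishing for the four vacuum-shaped triples from Proposition \ref{prop-1-5} and for $(\mathscr{F}^-,\mathscr{F}^+_t,\mathscr{F}^-_t)$ by restricting $Y^\theta(-,z)$ to $\mathscr{F}^-$ exactly as in the $d=1$ case, and then deduces the lemma from \eqref{eq-sets-sum} and \eqref{eq-07202001}. Your explicit run through the multiplication table of $G$ and the self-duality/permutation bookkeeping merely spell out what the paper leaves implicit.
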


We are now in a position to state our main result.
\begin{theorem}
The fusion algebra $\Fusion(\mathscr{F}^+)$ for $d>1$ is isomorphic to 
$\Fusion(\mathscr{T}^+)$, i.e.,
\begin{equation*}
\dim_\C I_{\mathscr{F}^+}
\fusion{\mathscr{F}^X}{\mathscr{F}^Y}{\mathscr{F}^Z}
=\begin{cases}
1&\quad X+Y=Z,\\
0&\quad\text{otherwise},
\end{cases}
\end{equation*}
where $X,Y,Z\in\{X(i,j)\,|\,i,j\in\Z/2\Z\}$.
\end{theorem}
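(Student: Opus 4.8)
The plan is to sandwich the fusion rule between an upper bound obtained by restricting to the subalgebra $(\mathscr{T}^+)^{\otimes d}$ and the matching lower bound already supplied by Lemma \ref{lem-main-sub}. Since $(\mathscr{T}^+)^{\otimes d}$ is a vertex operator subalgebra of $\mathscr{F}^+$ with the same Virasoro element, and $\mathscr{F}^X,\,\mathscr{F}^Y$ are simple $\mathscr{F}^+$-modules, I would first use the decompositions \eqref{eq-07202001} to choose simple $(\mathscr{T}^+)^{\otimes d}$-submodules $\mathscr{F}^{\vec a}\subseteq\mathscr{F}^X$ and $\mathscr{F}^{\vec b}\subseteq\mathscr{F}^Y$ (each $\mathscr{F}^{\vec a}=\bigotimes_i\mathscr{T}^{a_i}$ being a tensor product of simple $\mathscr{T}^+$-modules, hence simple over $(\mathscr{T}^+)^{\otimes d}$), so that $\vec a\in X$ and $\vec b\in Y$. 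Proposition \ref{prop-1-3} then gives
\begin{equation*}
\dim_\C I_{\mathscr{F}^+}\fusion{\mathscr{F}^X}{\mathscr{F}^Y}{\mathscr{F}^Z}\le\dim_\C I_{(\mathscr{T}^+)^{\otimes d}}\fusion{\mathscr{F}^{\vec a}}{\mathscr{F}^{\vec b}}{\mathscr{F}^Z}.
\end{equation*}

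Next I would evaluate the right-hand side explicitly. Regarding $\mathscr{F}^Z=\bigoplus_{\vec c\in Z}\mathscr{F}^{\vec c}$ as a $(\mathscr{T}^+)^{\otimes d}$-module via \eqref{eq-07202001}, every intertwining operator with target $\mathscr{F}^Z$ splits along the summands through the projections, whence
\begin{equation*}
\dim_\C I_{(\mathscr{T}^+)^{\otimes d}}\fusion{\mathscr{F}^{\vec a}}{\mathscr{F}^{\vec b}}{\mathscr{F}^Z}=\sum_{\vec c\in Z}\dim_\C I_{(\mathscr{T}^+)^{\otimes d}}\fusion{\mathscr{F}^{\vec a}}{\mathscr{F}^{\vec b}}{\mathscr{F}^{\vec c}}.
\end{equation*}
Iterating Theorem \ref{thm-1-4} over the $d$ tensor factors---its finiteness hypothesis being guaranteed at each stage by the bound $\dim_\C I_{\mathscr{T}^+}\le 1$ from Theorem \ref{thm-main-1}---and then applying Theorem \ref{thm-main-1} factorwise yields
\begin{equation*}
\dim_\C I_{(\mathscr{T}^+)^{\otimes d}}\fusion{\mathscr{F}^{\vec a}}{\mathscr{F}^{\vec b}}{\mathscr{F}^{\vec c}}=\prod_{i=1}^d\dim_\C I_{\mathscr{T}^+}\fusion{\mathscr{T}^{a_i}}{\mathscr{T}^{b_i}}{\mathscr{T}^{c_i}},
\end{equation*}
which is $1$ when $\vec c=\vec a+\vec b$ and $0$ otherwise. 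Thus the displayed sum counts the vectors $\vec c\in Z$ equal to $\vec a+\vec b$.

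The conclusion then follows from the coset bookkeeping of \eqref{eq-sets-sum}. As $\vec a\in X$ and $\vec b\in Y$, we have $\vec a+\vec b\in X+Y$, and because the four sets $X(i,j)$ are mutually disjoint, $\vec a+\vec b\in Z$ holds precisely when $Z=X+Y$. Hence the sum equals $1$ if $Z=X+Y$ and $0$ otherwise, so that $\dim_\C I_{\mathscr{F}^+}\fusion{\mathscr{F}^X}{\mathscr{F}^Y}{\mathscr{F}^Z}\le 1$ in the first case and $=0$ in the second. For $Z=X+Y$ the reverse inequality is exactly Lemma \ref{lem-main-sub}, so the fusion rule equals $1$, which proves the theorem.

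The only points requiring care are the additivity of the space of intertwining operators along the direct-sum target $\mathscr{F}^Z$ and the iterated application of Theorem \ref{thm-1-4}; both hinge on the finite-dimensionality of the intertwining spaces involved, which is already secured by Theorem \ref{thm-main-1}. Once these are in place, the remaining argument is purely combinatorial, so I do not expect a genuine obstacle beyond this bookkeeping.
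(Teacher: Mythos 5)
Your proposal is correct and follows essentially the same route as the paper: the upper bound via Proposition \ref{prop-1-3} applied to simple $(\mathscr{T}^+)^{\otimes d}$-summands, the factorwise computation via Theorem \ref{thm-1-4} and Theorem \ref{thm-main-1} (the paper's \eqref{eq-fusion-product-tensor}), the coset bookkeeping with \eqref{eq-sets-sum}, and the matching lower bound from Lemma \ref{lem-main-sub}. Your explicit attention to the additivity of the intertwining-operator space over the direct-sum target and to the finiteness hypothesis in Theorem \ref{thm-1-4} is slightly more careful than the paper's presentation, but it is the same argument.
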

\begin{proof}
By the definition of the fusion product it follows that 
\begin{equation*}
\mathscr{F}^X\times\mathscr{F}^Y=\sum_{i,j=0,1}\
\dim_\C I_{\mathscr{F}^+}\fusion{\mathscr{F}^X}{\mathscr{F}^Y}{\mathscr{F}^{X(i,j)}}
\mathscr{F}^{X(i,j)}.
\end{equation*}
By Proposition \ref{prop-1-3}, for any $\vec{a}\in X$ and $\vec{b}\in Y$, 
\begin{equation*}
\begin{split}
\dim_\C I_{\mathscr{F}^+}
\fusion{\mathscr{F}^X}{\mathscr{F}^Y}{\mathscr{F}^{X(i,j)}}
&\le\dim_\C I_{\mathscr{F}^{\vec{0}}}
\fusion{\mathscr{F}^{\vec{a}}}{\mathscr{F}^{\vec{b}}}{\mathscr{F}^{X(i,j)}}\\
&=\sum_{\vec{c}\in X(i,j)}
\dim_\C I_{\mathscr{F}^{\vec{0}}}\fusion{\mathscr{F}^{\vec{a}}}{\mathscr{F}^{\vec{b}}}{\mathscr{F}^{\vec{c}}}.
\end{split}
\end{equation*}
By \eqref{eq-fusion-product-tensor} we see that 
\begin{equation*}
\dim_\C I_{\mathscr{F}^{\vec{0}}}
\fusion{\mathscr{F}^{\vec{a}}}{\mathscr{F}^{\vec{b}}}
{\mathscr{F}^{\vec{c}}}
=\begin{cases}
1&\quad \vec{a}+\vec{b}=\vec{c},\\
0&\quad\text{otherwise}.
\end{cases}
\end{equation*}
Hence by Lemma \ref{lem-main-sub} we have
\begin{equation*}
\dim_\C I_{\mathscr{F}^+}\fusion{\mathscr{F}^X}{\mathscr{F}^Y}{\mathscr{F}^{X(i,j)}}=
\begin{cases}
1&\quad X+Y=X(i,j),\\
0&\quad\text{otherwise}.
\end{cases}
\end{equation*}
\end{proof}

\begin{remark}
We can see that $\{\mathscr{F}^\pm,\mathscr{F}^\pm_t\}$ forms a group isomorphic to $G=\Z/2\Z\times\Z/2\Z$
and that fusion rules are independent of $d$.
\end{remark}

\appendix
\section{Appendix}\label{appendix}
In the appendix we will derive a formula of a determinant which is used in subsection  \ref{subsect-4-3}.
\subsection{A formula of a determinant}\label{appendix-1}

Let $k$ be a non-negative integer and let $a,\,b,\,c\,$ be formal variables. We define a matrix $A^{k}(a,b,c)$ by
\begin{equation*}
A^{k}_{p,q}(a,b,c)=
\begin{cases}
\displaystyle{\binom{q+a}{p}\binom{k-q+b}{k-p}}&\quad
(0\leq p\leq k,\, 0\leq q\leq k+1),\\
&\\
\displaystyle{\binom{q+c}{k+1}}
&\quad (p=k+1,\,0\leq q\leq k+1)
\end{cases}
\end{equation*}
and
\begin{equation*}
A^{k}(a,b,c)=(A^{k}_{j,i}(a,b,c))_{0\leq i,j\leq k+1}. 
\end{equation*}
We can obtain the exact form of the $\det A^{k}(a,b,c)$.
\begin{lemma}\label{theorem01}
For any  non-negative integer $k$ and formal variables $a,\,b,\,c$, we have 
\begin{equation*}
\det A^{k}(a,b,c)=\prod_{i=1}^{k}\binom{a+b+i}{i}.
\end{equation*}
\end{lemma}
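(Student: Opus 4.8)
The plan is to induct on $k$, using the Chu--Vandermonde identity $\sum_{p=0}^{k}\binom{q+a}{p}\binom{k-q+b}{k-p}=\binom{a+b+k}{k}$, valid identically in $q$, as the engine that produces the factor $\binom{a+b+k}{k}$ at each step. The base case $k=0$ is the direct evaluation of the $2\times 2$ matrix, giving $\det A^{0}(a,b,c)=1$, the empty product. Throughout I work with the transpose of the displayed matrix, which by definition carries the same determinant, so that rows are indexed by $p$ and columns by $q$, with $A^{k}_{p,q}(a,b,c)=\binom{q+a}{p}\binom{k-q+b}{k-p}$ for $0\le p\le k$ and $A^{k}_{k+1,q}=\binom{q+c}{k+1}$.

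For the inductive step I would first regard each row $0\le p\le k$ as the sequence of values, at the integer points $q=0,1,\dots,k+1$, of the polynomial $f_{p}(q)=\binom{q+a}{p}\binom{k-q+b}{k-p}$, which has degree $\le k$ in $q$, while the last row lists the values of $f_{k+1}(q)=\binom{q+c}{k+1}$, of degree $k+1$ with leading coefficient $1/(k+1)!$. Passing to Newton forward differences in $q$ is a unimodular column operation, so it preserves the determinant and replaces the entries by $\Delta_{q}^{\,j}f_{p}(0)$. Since $\Delta_{q}^{\,k+1}$ annihilates every polynomial of degree $\le k$ and sends $f_{k+1}$ to its leading coefficient times $(k+1)!$, namely $1$, the last column becomes $(0,\dots,0,1)^{\mathsf{T}}$ independently of $c$; this simultaneously proves the $c$-independence implicit in the formula and, after cofactor expansion along that column, reduces the problem to the $(k+1)\times(k+1)$ matrix $N'_{p,j}=\Delta_{q}^{\,j}f_{p}(0)$ with $0\le p,j\le k$.

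Next I would extract the factor $\binom{a+b+k}{k}$. Because the Chu--Vandermonde sum $\sum_{p=0}^{k}f_{p}(q)=\binom{a+b+k}{k}$ is constant in $q$, its $j$-th forward difference equals $\binom{a+b+k}{k}$ for $j=0$ and $0$ otherwise, so $\sum_{p=0}^{k}N'_{p,j}=\binom{a+b+k}{k}\,\delta_{j,0}$. Adding all rows of $N'$ to its top row thus turns that row into $(\binom{a+b+k}{k},0,\dots,0)$, and a cofactor expansion gives $\det A^{k}(a,b,c)=\binom{a+b+k}{k}\,\det N''$, where $N''_{p,j}=\Delta_{q}^{\,j}f_{p}(0)$ with $1\le p,j\le k$.

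The crux is then to identify $\det N''$ with the $(k-1)$-level value $\prod_{i=1}^{k-1}\binom{a+b+i}{i}$ supplied by the induction hypothesis. Here I would combine the two structural identities $\Delta_{q}\binom{q+a}{p}=\binom{q+a}{p-1}$ and Pascal's rule $\binom{k-q+b}{k-p}=\binom{k-1-q+b}{k-p}+\binom{k-1-q+b}{k-1-p}$ with the finite-difference product rule to rewrite each entry $\Delta_{q}^{\,j}f_{p}(0)$ (with $p,j\ge 1$) as the corresponding entry of the finite-difference matrix attached to $A^{k-1}(a,b,c)$ plus a combination of entries in strictly lower rows; in the smallest nontrivial case $k=2$ this is exactly the single determinant-preserving row operation $R_{1}\to R_{1}+R_{2}$, and in general it should be a triangular, hence determinant-preserving, sequence of such operations. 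I expect this matching to be the main obstacle: peeling off the leading factor is clean, but showing that the residual $N''$ is, up to unimodular row operations, precisely the next member of the family requires the careful re-indexing that the product rule for $\Delta_{q}$ forces on the two binomial factors. As an independent check, one may view $\det A^{k}(a,b,c)$ as a polynomial in $a$ of degree $\binom{k+1}{2}$ with leading coefficient $\prod_{i=1}^{k}1/i!$ that is $c$-free and, by the single row-sum relation above, vanishes whenever $a+b\in\{-1,\dots,-k\}$, matching the zero set of $\prod_{i=1}^{k}\binom{a+b+i}{i}$; verifying the correct vanishing multiplicities would yield the formula by this alternative route.
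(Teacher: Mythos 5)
Your reduction to the $k\times k$ matrix $N''_{p,j}=\Delta_q^{\,j}f_p(0)$ ($1\le p,j\le k$) is correct, and up to that point your argument runs parallel to the paper's: both proofs extract the factor $\binom{a+b+k}{k}$ from the Chu--Vandermonde sum $\sum_{p=0}^{k}f_p(q)=\binom{a+b+k}{k}$ and both use differencing in $q$ to kill the $c$-dependence and shrink the matrix. But the step you yourself flag as ``the main obstacle'' is precisely where the paper's proof does all of its real work, and your proposal does not close it. Two concrete problems. First, $N''$ is $k\times k$ while $A^{k-1}(a,b,c)$ and its Newton-difference transform are $(k+1)\times(k+1)$, so ``the corresponding entry of the finite-difference matrix attached to $A^{k-1}(a,b,c)$'' is undefined until you fix an index correspondence and account for the missing row and column; no sequence of row operations alone can equate determinants of matrices of different sizes. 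Second, the difference product rule gives
\begin{equation*}
\Delta_q f_p(q)=-\binom{q+a+1}{p}\binom{k-1-q+b}{k-1-p}+\binom{q+a}{p-1}\binom{k-q+b}{k-p},
\end{equation*}
whose two terms are entries of $A^{k-1}(a+1,b,c)$ and $A^{k-1}(a,b+1,c)$ respectively --- the parameters are \emph{shifted}, and iterating $\Delta_q$ compounds the shifts. Removing them is not a formal triangularity observation: the paper needs the telescoping identity $A^{k-1}_{0,q}(a,b,c)+\sum_{i=1}^{p}\bigl(A^{k-1}_{i,q}(a+1,b,c)-A^{k-1}_{i-1,q}(a,b+1,c)\bigr)=A^{k-1}_{p,q}(a,b,c)$, proved by a separate induction on $p$, to convert the differenced columns back into unshifted entries of $A^{k-1}(a,b,c)$. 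A $k=2$ check plus the expectation that ``it should be a triangular sequence of such operations'' does not substitute for this identity; and because you took all higher-order differences at once rather than a single first difference keeping the $q$-dependence, the identity you would actually need is strictly more complicated than the paper's.

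The alternative route you sketch at the end is also only a sketch. The single row-sum dependence shows that $\det A^k$ vanishes when $a+b\in\{-1,\dots,-k\}$, but only to order one at each such point, whereas $\prod_{i=1}^{k}\binom{a+b+i}{i}$ vanishes to order $k+1-j$ at $a+b=-j$; matching the degree $\binom{k+1}{2}$ and the leading coefficient forces you to establish those higher multiplicities, i.e.\ a rank computation you have not indicated how to perform. The most economical repair is to follow the paper: use only first differences in consecutive $q$-rows (which already turns the constant column into $(0,\dots,0,1)^{\mathsf T}$ and turns the $c$-column into the last column of $A^{k-1}$), and then prove the telescoping identity above to finish the recursion $\det A^{k}=\binom{k+a+b}{k}\det A^{k-1}$.
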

\begin{proof}
Note that 
\begin{equation*}
\begin{split}
\sum_{p=0}^{k}A^k_{p,q}(a,b,c)&
=\sum_{p=0}^k\binom{q+a}{p}\binom{k-q+b}{k-p}\\
&=\left.\left(\sum_{p=0}^k\binom{q+a}{p}x^{q+a-p}\binom{k-q+b}{k-p}x^{p-q+b}\right)\right|_{x=1}\\
&=\left.\left(\sum_{p=0}^k(\partial^{(p)}x^{q+a})(\partial^{(k-p)}x^{k-q+b})\right)\right|_{x=1}\\
&=\left.\left(\partial^{(k)}x^{k+a+b}\right)\right|_{x=1}=\binom{k+a+b}{k}
\end{split}
\end{equation*}
for any $q$. For simplicity, we write 
$A^k_{p,q}(a,b,c)$ by $A^k_{p,q}$. 
Then we proceed
\begin{equation}\label{6.10}
\begin{split}
\det A^k(a,b,c)
&=
\begin{vmatrix}
A^k_{0,0}&\cdots &A^k_{k,0}&A^k_{k+1,0}\\
\vdots& &\vdots &\vdots \\
A^k_{0,q}&\cdots &A^k_{k,q }&A^k_{k+1,q}\\
\vdots&  &\vdots&\vdots\\
A^k_{0,k+1}&\cdots &A^k_{k,k+1}&A^k_{k+1,k+1}
\end{vmatrix}\\
&\\
&=
\begin{vmatrix}
A^k_{0,0}&\cdots& A^k_{k-1,0}&\sum_{p=0}^kA^k_{p,0}&A^k_{k+1,0}\\
\vdots & &\vdots &\vdots&\vdots\\
A^k_{0,q}&\cdots &A^k_{k-1,q}&\sum_{p=0}^{k}A^k_{p,q }&A^k_{k+1,q}\\
\vdots&  &\vdots&\vdots&\vdots\\
A^k_{0,k+1}&\cdots &A^k_{k-1,k+1}&\sum_{p=0}^{k}A^k_{p,k+1}&
A^k_{k+1,k+1}
\end{vmatrix}
\\
&\\
&=\begin{vmatrix}
A^k_{0,0}&\cdots& A^k_{k-1,0}&\binom{k+a+b}{k}&A^k_{k+1,0}\\
 \vdots  &&\vdots &\vdots &\vdots\\
A^k_{0,q}&\cdots& A^k_{k-1,q}&\binom{k+a+b}{k}&A^k_{k+1,q}\\
 \vdots&& \vdots &\vdots&\vdots\\
A^k_{0,k+1}&\cdots& A^k_{k-1,k+1}&\binom{k+a+b}{k}&A^k_{k+1,k+1}
\end{vmatrix}\\
&\\
&=\binom{k+a+b}{k}\begin{vmatrix}
A^k_{0,0}&\cdots& A^k_{k-1,0}&1&A^k_{k+1,0}\\
 \vdots & &\vdots &\vdots &\vdots\\
A^k_{0,q}&\cdots& A^k_{k-1,q}&1&A^k_{k+1,q}\\
 \vdots&&\vdots  &\vdots&\vdots\\
A^k_{0,k+1}&\cdots& A^k_{k-1,k+1}&1&A^k_{k+1,k+1}
\end{vmatrix}.
\end{split}
\end{equation}

On the one hand, for any $0\leq q\leq k$ and $1\leq p\leq k-1$, 
we see that 
\begin{equation*}
\begin{split}
B_{p,q}^{k}:&=A_{p,q}^{k}(a,b,c)-A_{p,q+1}^{k}(a,b,c)\\
&=\binom{q+a}{p}\binom{k-q+b}{k-p}-\binom{q+a+1}{p}
\binom{k-q+b-1}{k-p}\\
&\\
&=\binom{q+a}{p-1}\binom{k-q+b-1}{k-1-p}\frac{1}{p(k-p)}\\
&\qquad\qquad\times
\biggl((q+a-p+1)(k-q+b)-(q+a+1)(p-q+b)\biggr)\\
&\\
&=\binom{q+a}{p-1}\binom{k-q+b-1}{k-1-p}\left(\frac{q+a+1}{p}
-\frac{k-q+b}{k-p}\right)\\
&\\
&=\binom{q+a+1}{p}\binom{k-q+b-1}{k-1-p}-\binom{q+a}{p-1}
\binom{k-q+b}{k-p}\\
&\\
&=A^{k-1}_{p,q}(a+1,b,c)-A_{p-1,q}^{k-1}(a,b+1,c).
\end{split}
\end{equation*}
On the other hand, for $p=0$ and $p=k+1$, we have 
 \begin{align*}
 &A_{0,q}^{k}(a,b,c)-A_{0,q+1}^{k}(a,b,c)=\binom{k-1-q+b}{k-1}
 =A^{k-1}_{0,q}(a,b,c),\\
 &A_{k+1,q}^{k}(a,b,c)-A_{k+1,q+1}^{k}(a,b,c)=-\binom{q+c}{k}
 =-A^{k-1}_{k,q}(a,b,c). 
 \end{align*}
Therefore \eqref{6.10} turns to be
\begin{equation*}
\begin{split}
&\det A^{k}(a,b,c)\\
&=\binom{k+a+b}{k}
\begin{vmatrix}
A_{0,0}^{k-1}&B_{1,0}^{k}&\cdots &B_{k-1,0}^{k}&0&-A^{k-1}_{k,0}
\\
\vdots  & \vdots &&\vdots&\vdots&\vdots\\
A_{0,q}^{k-1} &B_{1,q}^{k}&\cdots &B^{k}_{k-1,q}&0&-A^{k-1}_{k,q}\\
\vdots &\vdots &  &\vdots&\vdots&\vdots\\
A_{0,k+1}^{k}&A^{k}_{1,k+1}&\cdots &A^{k}_{k-1,k+1}&1&A^k_{k+1,k+1}
\end{vmatrix}\\
&\\
&=\binom{k+a+b}{k}
\begin{vmatrix}
A_{0,0}^{k-1}&B_{1,0}^{k}&\cdots &B_{k-1,0}^{k}&A^{k-1}_{k,0}\\
\vdots  & \vdots &&\vdots&\vdots\\
A_{0,q}^{k-1} &B_{1,q}^{k}&\cdots &B^{k}_{k-1,q}&A^{k-1}_{k,q}\\
\vdots &\vdots &  &\vdots&\vdots\\
A_{0,k}^{k-1} &B_{1,k}^{k}&\cdots &B^{k}_{k-1,k}&
A^{k-1}_{k,k}
\end{vmatrix}.
\end{split}
\end{equation*}

Provided that
\begin{equation}\label{asdiuf}
A_{0,q}^{k-1}+\sum_{i=1}^{p}B_{i,q}^{k}=A_{p,q}^{k-1}.
\end{equation}
Then we have 
\begin{equation*}
\det A^{k}(a,b,c)=\binom{k+a+b}{k}\det A^{k-1}(a,b,c)
\end{equation*}
Since 
\begin{equation*}
\det A^{0}(a,b,c)=
\begin{vmatrix}
1&c\\
1&c+1
\end{vmatrix}=1,
\end{equation*}
we conclude that 
\begin{equation*}
\det A^{k}(a,b,c)=\prod_{i=1}^{k}\binom{i+a+b}{i}.
\end{equation*}

Thus it is enough  to show Identity \eqref{asdiuf}.  
We shall prove the identity by induction on $p$.
For $p=1$, we have
\begin{equation*}
\begin{split}
&A_{0,q}^{k-1}+B_{1,q}^{k}\\
&=A^{k-1}_{1,q}(a+1,b,c)-A_{0,q}^{k-1}(a,b+1,c)+A_{0,q}^{k-1}(a,b,c)\\
&=\binom{a+1+q}{1}\binom{k-1-q+b}{k-2}-\binom{k-q+b}{k-1}
+\binom{k-1-q+b}{k-1}\\
&\\
&=\binom{a+1+q}{1}\binom{k-1-q+b}{k-2}-\binom{k-q+b-1}{k-2}\\
&\\
&=\binom{a+q}{1}\binom{k-1-q+b}{k-2}=A_{1,q}^{k-1}(a,b,c).
\end{split}
\end{equation*}
Now suppose that 
\begin{equation*}
A_{0,q}^{k-1}+\sum_{i=1}^{p-1}B_{i,q}^{k}=A_{p-1,q}^{k-1}(a,b,c).
\end{equation*}
Then we have 
\begin{equation*}
\begin{split}
&A_{0,q}^{k-1}+\sum_{i=1}^{p}B_{i,q}^{k}\\
&=A_{p-1,q}^{k-1}(a,b,c)+A^{k-1}_{p,q}(a+1,b,c)
-A_{p-1,q}^{k-1}(a,b+1,c)\\
&\\
&=\binom{a+q}{p-1}\binom{k-1-q+b}{k-p}
+\binom{a+q+1}{p}\binom{k-1-q+b}{k-1-p}\\
&\quad-\binom{a+q}{p-1}\binom{k-q+b}{k-p}\\
&\\
&=-\binom{a+q}{p-1}\binom{k-1-q+b}{k-1-p}+\binom{a+q+1}{p}
\binom{k-1-q+b}{k-1-p}\\
&\\
&=\binom{a+q}{p}\binom{k-1-q+b}{k-1-p}=A_{p,q}^{k-1}(a,b,c).
\end{split}
\end{equation*}
This completes the proof of Identity \eqref{asdiuf}.
\end{proof}

\end{document}